\theoremstyle{definition}
\newtheorem{definition}{Definition}
\theoremstyle{plain}
\newtheorem{theorem}[definition]{Theorem}
\newtheorem{lemma}[definition]{Lemma}
\newtheorem{corollary}[definition]{Corollary}
\theoremstyle{remark}
\newtheorem*{remark}{Remark}
\newtheorem*{notation}{Notation}
\newtheorem*{convention}{Convention}
\begin{document}
%
\title{On the Unity of Logic: a Sequential, \\ Unpolarized Approach}

\author{\IEEEauthorblockN{Norihiro Yamada}
\IEEEauthorblockA{Department of Computer Science\\
University of Oxford\\
Wolfson Bulding, Parks Road, Oxford OX1 3QD, United Kingdom\\
Email: norihiro.yamada@cs.ox.ac.uk}}


%


\maketitle

\begin{abstract}
The present work aims to give a \emph{unity} of logic via standard \emph{sequential}, \emph{unpolarized} games.
Specifically, our vision is that there must be mathematically precise concepts of \emph{linear refinement} and \emph{intuitionistic restriction} of logic such that the linear refinement of \emph{classical logic (CL)} coincides with \emph{(classical) linear logic (LL)}, and its intuitionistic restriction with the linear refinement of \emph{intuitionistic logic (IL)} into \emph{intuitionistic LL (ILL)}.
However, LL is, in contradiction to the name, cannot be the linear refinement of CL at least from the \emph{game-semantic} point of view due to its \emph{concurrency} and \emph{polarization}.
In fact, existing game semantics of LL employs concurrency, which is rather exotic to game semantics of ILL, IL or CL.
Also, linear negation in LL brings polarization to logic, which is never true in (game semantics of) ILL, IL or CL.
In search for the linear refinement of CL (or the \emph{classicalization} of ILL), we carve out (a calculus of) \emph{linear logic negative (LL$^-$)} from (the two-sided sequent calculus of) LL by discarding linear negation, restricting the rules Cut, $\otimes$R, $\invamp$L, $\&$R, $\oplus$L and $\multimap$R (for they cause concurrency) in a certain way, and adding \emph{distribution rules} to recover these rules (except $\multimap$R) and give a translation of sequents $\boldsymbol{\Delta \vdash \Gamma}$ for CL into the sequents $\boldsymbol{\oc \Delta \vdash \wn \Gamma}$ for LL$^-$.
We then give a categorical semantics of LL$^-$, for which we introduce \emph{why not monad} $\boldsymbol{\wn}$, dual to the well-known \emph{of course comonad} $\boldsymbol{\oc}$, giving a categorical translation $\boldsymbol{\Delta \Rightarrow \Gamma \stackrel{\mathrm{df. }}{=} \wn (\Delta \multimap \Gamma) \cong \oc \Delta \multimap \wn \Gamma}$ of CL into LL$^-$, which is the Kleisli extension of the standard translation $\boldsymbol{\Delta \Rightarrow \Gamma \stackrel{\mathrm{df. }}{=} \oc \Delta \multimap \Gamma}$ of IL into ILL.
Moreover, we instantiate the categorical semantics by a \emph{fully complete} (sequential, unpolarized) game semantics of LL$^-$ (without atoms), for which we introduce \emph{linearity} of strategies.
Moreover, employing the above categorical translations, it automatically leads to game semantics of ILL, IL and CL as well.
Thus, we establish a sequential, unpolarized unity of logic, where discarding the co-Kleisli construction $\boldsymbol{(\_)_{\oc}}$ and/or the Kleisli construction $\boldsymbol{(\_)^{\wn}}$, and imposing \emph{well-bracketing} on strategies capture linear refinement and intuitionistic restriction of logic in a syntax-independent manner, respectively.
\end{abstract}


%
\IEEEpeerreviewmaketitle

\section{Introduction}
\subsection{Linear Logic}
\emph{Linear logic (LL)} \cite{girard1987linear} is often said to be \emph{resource-conscious} or \emph{resource-sensitive} because it requires proofs to consume each premise exactly once to produce a conclusion. 
One of the achievements of LL is: Like \emph{classical logic (CL)} \cite{shapiro2018classical,troelstra2000basic} it has an \emph{involutive} negation, and more generally the \emph{De Morgan dualities} \cite{girard1987linear,girard1995linear}, in the strict sense (i.e., not only up to logical equivalence), called \emph{linear negation} $(\_)^\bot$, while like \emph{intuitionistic logic (IL)} \cite{heyting1930formalen,troelstra1988constructivism,troelstra2000basic} it has \emph{constructivity} in the sense of non-trivial semantics \cite{girard1989proofs}, where note that neither CL nor IL (in the form of the sequent calculi \emph{LK} and \emph{LJ} \cite{gentzen1935untersuchungen}) achieves both the dualities and the contructivity \cite{girard1989proofs,troelstra2000basic}.

Strictly speaking, LL has both classical and intuitionistic variants, \emph{CLL} and \emph{ILL}, and LL usually refers to CLL \cite{abramsky1993computational,mellies2009categorical}.
Let us call the standard (and two-sided) sequent calculi for LL and ILL \cite{girard1987linear,abramsky1993computational,troelstra2000basic,mellies2009categorical} \emph{LLK} and \emph{LLJ}, respectively.

\if0
\subsection{Strict De Morgan Dualities in Linear Logic}
\label{LinearNegation}
Nevertheless, although the strict dualities play a central role in LL, it gives us the following puzzles.
First, recall that LLJ is obtained from LJ by eliminating the \emph{weakening} and the \emph{contraction} rules, where each logical connective generates the \emph{context-sharing} and the \emph{non-context-sharing} variants (see Sect. 9.3 of \cite{troelstra2000basic} for this point); in this sense, ILL is the \emph{linear refinement} of IL.
However, LLK is obtained from LK by the linear refinement \emph{plus introducing the strict dualities}. 

Also, the strict dualities have nothing to do with \emph{classicalization} of ILL either.
Recall that LJ is obtained from LK by restricting sequents to the ones that have at most one formula on the RHS, which we call \emph{intuitionistic restriction}, while LLJ is obtained from LLK by the intuitionistic restriction \emph{plus deleting top $\top$, zero $0$, par $\invamp$, why not $\wn$ and the strict dualities}.\footnote{There are other variants on which constants to be excluded from ILL \cite{abramsky1993computational,troelstra2000basic,mellies2009categorical}.} 
\fi

\subsection{Game Semantics}
\emph{Game semantics} \cite{abramsky1999game,hyland1997game} refers to a particular kind of \emph{semantics of logic and computation} \cite{winskel1993formal,gunter1992semantics,amadio1998domains}, in which formulas (or types) and proofs (or programs) are interpreted as \emph{games} and \emph{strategies}, respectively. 

A game is a certain kind of a rooted forest whose branches correspond to possible developments or \emph{(valid) positions} of the `game in the usual sense' (such as chess and poker). These branches are finite sequences of \emph{moves} of the game; a play of the game proceeds as its participants, \emph{Player} who represents a `mathematician' (or an `agent') and \emph{Opponent} who represents a `rebutter' (or an `environment'), alternately and separatedly perform moves allowed by the \emph{rules} of the game. 

On the other hand, a strategy on a game is what tells Player which move she should perform at each of her turns, i.e., `how she should play', on the game.  

\if0
As various \emph{full completeness/abstraction} results \cite{curien2007definability} in the literature have shown, game semantics has the appropriate degree of abstraction.
Another advantage of game semantics is its conceptual naturality: It models logic (resp. computation) by dialogical arguments (resp.  computational processes) between the participants of games, providing a syntax-independent explanation of syntax in a highly natural and intuitive (yet mathematically precise) manner.
Yet another advantage of game semantics is its flexibility: It models a wide range of formal languages by simply varying constraints on strategies \cite{abramsky1999game}, which enables us to compare and relate various concepts in a \emph{unified} manner.
\fi

\subsection{Concurrency and Polarization in Logic and Games}
\label{ConcurrencyAndPolarityInGameSemantics}
Problems in the game semantics of LL by Andreas Blass \cite{blass1992game} were the starting point of game semantics in its modern, \emph{categorical} form \cite{abramsky1994games}. 
Today, Guy McCusker's variant \cite{mccusker1998games} models ILL and IL in a \emph{unified} manner, embodying \emph{Girard's translation} $A \Rightarrow B \stackrel{\mathrm{df. }}{=} \oc A \multimap B$ of IL into ILL \cite{girard1987linear}.
Even game semantics of computation with classical features has been proposed \cite{herbelin1997games,blot2017realizability} though game semantics of CL in general has not been well-established yet.

Notably, modern game semantics of LL \cite{abramsky1999concurrent,mellies2005asynchronous} employs \emph{concurrent} games, in which more than one participant may be active simultaneously, as opposed to standard \emph{sequential} games, in which only one participant may perform a move at a time. 
Importantly, however, \emph{concurrency is not necessary at all for game semantics of CL} mentioned above. 

Another approach is to model the \emph{polarized} fragment of LL by \emph{polarized} (yet sequential) games \cite{laurent2002polarized}.
A game has the \emph{positive} (resp. \emph{negative}) polarity if Player (resp. Opponent) always initiates a play of the game \cite{laurent2002polarized}; standard \emph{unpolarized} games are all negative.
Also, polarization in games corresponds to polarization in logic \cite{girard1995linear,girard2011blind}, giving a unity of logic.

However, polarization is rather exotic to (game semantics of) CL; hence, it seems to have nothing to do with \emph{classicalization} of logic or games.
Also, polarization never occurs in (game semantics of) ILL, and therefore, it appears irrelevant to \emph{linear refinement} of logic. 

\subsection{Sequential, Unpolarized Unity of Logic}
Hence, we are concerned with LL \emph{without concurrency or polarization}, which let us call \emph{linear logic negative (LL$^-$)}, and moreover, conjecture that there are mathematically precise concepts of \emph{linear refinement} and \emph{intuitionistic restriction} of logic such that the linear refinement of CL coincides with LL$^-$, and its intuitionistic restriction with the linear refinement of IL into ILL, giving a sequential, unpolarized \emph{unity} of logic (n.b., classicalization is the inverse of intuitionistic restriction).

Motivated in this way, we carve out the language of LL$^-$ from that of LL by discarding linear negation (for it brings polarization) and a sequent calculus \emph{LLK$^-$} for LL$^-$ from LLK by restricting the rules Cut, $\otimes$R, $\invamp$L, $\&$R, $\oplus$L and $\multimap$R (for they cause concurrency) in a certain way and adding \emph{distribution rules} to recover these rules (except $\multimap$R) and translate sequents $\Delta \vdash \Gamma$ in \emph{LK$^-$} into the sequents $\oc \Delta \vdash \wn \Gamma$ in LLK$^-$, where LK$^-$ is the calculus obtained from LK by restricting $\Rightarrow$R in the same way as $\multimap$R in LLK$^-$.
We then give a cut-elimination procedure on LLK$^-$ by \emph{normalization-by-evaluation (NBE)} \cite{berger1991inverse}, exploiting the game semantics below.

In terms of these calculi, linear refinement corresponds to eliminating exponentials $\oc$ and/or $\wn$ imposed (implicitly) in LK$^-$ and LJ, and intuitionistic restriction to limiting the number of formulas on the RHS of sequents to at most one. 
In other words, LK$^-$ (resp. LJ) is obtained from LLK$^-$ (resp. LLJ) by the translation $\Delta \vdash_{\mathsf{LK^-}} \Gamma \stackrel{\mathrm{df. }}{=} \oc \Delta \vdash_{\mathsf{LLK^-}} \wn \Gamma$ (resp. $\Delta \vdash_{\mathsf{LJ}} B \stackrel{\mathrm{df. }}{=} \oc \Delta \vdash_{\mathsf{LLJ}} B$), and LLJ (resp. LJ) from LLK$^-$ (resp. LK$^-$) by intuitionistic restriction, and the two operations commute, where the subscripts indicate the underlying calculi.

\subsection{Sequential, Unpolarized Unity of Games}
We then aim to establish the game-semantic counterpart of the unity on LLK$^-$, LK$^-$, LLJ and LJ.
Let us first explain our approach in terms of \emph{categorical logic} \cite{lambek1988introduction,jacobs1999categorical}. 
Recall that the standard categorical semantics of ILL (without $\bot$ or $\oplus$) is a \emph{new-Seely category (NSC)} \cite{bierman1995categorical}, which is a \emph{symmetric monoidal closed category (SMCC)} $\mathcal{C} = (\mathcal{C}, \otimes, \top, \multimap)$ with finite products $(1, \&)$ equipped with a comonad $\oc$ and isomorphisms $\top \stackrel{\sim}{\rightarrow} \oc 1$ and $\oc A \otimes \oc B \stackrel{\sim}{\rightarrow} \oc (A \& B)$ natural in $A, B \in \mathcal{C}$ such that the canonical adjunction between $\mathcal{C}$ and the co-Kleisli category $\mathcal{C}_\oc$ is monoidal.
Its charm is its \emph{unified} semantics of ILL and IL: $\mathcal{C}_\oc$ is cartesian closed, inducing the standard semantics of IL (without $\bot$ or $\vee$) \cite{lambek1988introduction,jacobs1999categorical}.

Then, to model LL$^-$, it is a natural idea to impose on $\mathcal{C}$ another symmetric monoidal structure $(\invamp, \bot)$, finite coproducts $(0, \oplus)$, a monad $\wn$ and natural isomorphisms $\wn 0 \stackrel{\sim}{\rightarrow} \bot$ and $\wn (A \oplus B) \stackrel{\sim}{\rightarrow} \wn A \invamp \wn B$ such that the canonical adjunction between $\mathcal{C}$ and the Kleisli category $\mathcal{C}^\wn$ is monoidal.
However, it is not possible; thus, we require that $\mathcal{C}$ is equipped with a lluf subcategory $\sharp\mathcal{C}$ whose morphisms are all \emph{strict}, which has the NSC-structure inherited from $\mathcal{C}$ (except $\multimap$), finite coproducts and the triple $(\invamp, \bot, \wn)$.
Moreover, we impose a \emph{distributive law} between $\oc$ and $\wn$ \cite{power2002combining} on $\sharp\mathcal{C}$ so that the co-Kleisli and the Kleisli constructions on $\sharp\mathcal{C}$ are extended to each other, leading to the \emph{bi-Kleisli category} $\sharp\mathcal{C}_\oc^\wn \stackrel{\mathrm{df. }}{=} (\sharp\mathcal{C}_\oc)^\wn \simeq (\sharp\mathcal{C}^\wn)_\oc$.
Then, if $\sharp\mathcal{C}$ has certain natural transformations/isomorphisms, it models LLK$^-$, and $\sharp\mathcal{C}_\oc^\wn$ does LK$^-$, while $\mathcal{C}$ and $\mathcal{C}_\oc$ do LLJ and LJ.

\if0
It is perhaps worth pointing out that in our approach not all the De Morgan dualities but only $\neg A \invamp \neg B \cong \neg (A \otimes B)$ and $\wn \neg A \cong \neg \oc A$ of LL and $\neg A \vee \neg B \cong \neg (A \& B)$ of CL hold, and not \emph{small-step} but only \emph{big-step} currying is permitted, in LL$^-$ and CL.
Also, our systematic semantics suggests that coproducts in IL and (co)products in CL should be all \emph{weak}.
\fi

Finally, we instantiate the categorical semantics by a game-semantic NSC $\mathcal{LG}$ satisfying the required axioms, for which we introduce \emph{linearity} of strategies.
As the main theorem, we establish a \emph{fully complete} \cite{curien2007definability} game semantics of LLK$^-$ (without atoms) in $\sharp\mathcal{LG}$ and a game semantics of LK$^-$ in $\sharp\mathcal{LG}_\oc^\wn$.
Also, focusing on the intuitionistic part of the interpretations, we establish a fully complete game semantics of LLJ (without atoms) in the lluf subNSC $\mathcal{LG}^{\mathsf{wb}}$ of $\mathcal{LG}$, in which strategies are \emph{well-bracketed} \cite{hyland2000full}, and a game semantics of LJ in $\mathcal{LG}^{\mathsf{wb}}_\oc$. 

Thus, we establish a semantic unity of logic, where linear refinement and intuitionistic restriction correspond respectively to deletion of the co-Kleisli construction $(\_)_\oc$ and/or the Kleisli construction $(\_)^\wn$, and imposing well-bracketing on strategies.

\subsection{Our Contribution and Related Work}
Broadly, our main contribution is to establish the novel, in particular sequential and unpolarized, unity of logic in terms of sequent calculi, categories and games.
Novelties are the unified (categorical and game) semantics and linearity of strategies; highlights are the full completeness results. 

Our approach stands in sharp contrast to the concurrent and/or polarized approaches \cite{abramsky1999concurrent,mellies2005asynchronous,laurent2002polarized,girard1993unity,mellies2010resource,mellies2012game} for they stick to LL or its polarized fragments, while we modify the logic into the sequential, unpolarized LL$^-$.

Our categorical account is based on the established categorical semantics of ILL \cite{seely1987linear,benton1992term,benton1993term,bierman1995categorical} and of IL \cite{lambek1988introduction,jacobs1999categorical}, as well as the study of the relation between monad and comonad  \cite{power2002combining} and its application in game semantics \cite{harmer2007categorical}.


\subsection{Structure of the Paper}
We first present the sequent calculi in Sect.~\ref{SequentCalculi}, and the categorical semantics in Sect.~\ref{CategoricalSemantics}. 
Then, we establish the game semantics in Sect.~\ref{GameSemantics} together with some consequences in Sect.~\ref{CutElimSoundnessAndCompleteness}.
Finally, we show the full completeness in Sect.~\ref{FullCompleteness}, and draw a conclusion and propose future work in Sect.~\ref{ConclusionAndFutureWork}.

\section{Sequent Calculi for the Logics}
\label{SequentCalculi}
We assume that the reader is familiar with the formal languages and the sequent calculi for \emph{classical logic (CL)} and \emph{intuitionistic logic (IL)} \cite{gentzen1935untersuchungen,troelstra2000basic}, and those for \emph{linear logic (LL)} and \emph{intuitionistic linear logic (ILL)} \cite{girard1987linear,troelstra2000basic,abramsky1993computational}.

Throughout the paper, we focus on \emph{propositional} logic \cite{shoenfield1967mathematical}.

\if0
Recall that formulas $A$ of CL /IL (resp. $B$ of LL, $C$ of ILL) are constructed by the following grammar: 
\begin{itemize}

\item $A \stackrel{\mathrm{df. }}{=} \top \mid \bot \mid A \wedge A' \mid A \vee A' \mid A \Rightarrow A'$;

\item $B \stackrel{\mathrm{df. }}{=} \top \mid \bot \mid 1 \mid 0 \mid B \otimes B' \mid B \invamp B' \mid B \& B' \mid B \oplus B' \mid B \multimap B' \mid \oc B \mid \wn B$;

\item $C \stackrel{\mathrm{df. }}{=} \top \mid \bot \mid 1 \mid C \otimes C' \mid C \& C' \mid C \oplus C' \mid C \multimap C' \mid \oc C$.

\end{itemize}
\fi

\subsection{Sequent Calculi for Classical and Intuitionistic Logics}
Let us first present our sequent calculi \emph{LK$^-$} for CL, and \emph{LJ} for IL.
Roughly, LK$^-$ is obtained from Gentzen's \emph{LK} \cite{gentzen1935untersuchungen} by restricting the rule $(\textsc{$\Rightarrow$R}) \frac{ \ \Delta, A \vdash B, \Gamma \ }{ \ \Delta \vdash A \Rightarrow B, \Gamma \ }$ to $\Rightarrow$R$^-$ given in Fig.~\ref{FigLK} (so that they can be modeled by \emph{sequential} game semantics).

As minor points, we \emph{define} negation $\neg$ by $\neg A \stackrel{\mathrm{df. }}{=} A \Rightarrow \bot$, and include \emph{top} $\top$ and the right-rule on \emph{bottom} $\bot$ for our \emph{unified} approach.
Also, we modify $\wedge$L and $\vee$R into the ones closer to the calculi for (I)LL \cite{girard1987linear,troelstra2000basic} for convenience, which, in the presence of the structural rules, does not matter. 

\begin{definition}[LK$^-$]
The calculus \emph{\bfseries LK$\boldsymbol{^-}$} for CL consists of the rules in Fig.~\ref{FigLK}.
\begin{figure}[h]
\begin{center}
\begin{align*}
&\AxiomC{}
\LeftLabel{\textsc{(Id)}}
\UnaryInfC{$A \vdash A$}
\DisplayProof \ \
\AxiomC{$\Delta \vdash B, \Gamma$}
	\AxiomC{$\Delta', B \vdash \Gamma'$}
	\LeftLabel{\textsc{(Cut)}}
\BinaryInfC{$\Delta, \Delta' \vdash \Gamma, \Gamma'$}
\DisplayProof \\
&\AxiomC{$\Delta, A, A', \Delta' \vdash \Gamma$}
\LeftLabel{\textsc{(XL)}}
\UnaryInfC{$\Delta, A', A, \Delta' \vdash \Gamma$}
\DisplayProof \ \
\AxiomC{$\Delta \vdash \Gamma, B, B', \Gamma' $}
\LeftLabel{\textsc{(XR)}}
\UnaryInfC{$\Delta \vdash \Gamma, B', B, \Gamma'$}
\DisplayProof \\
&\AxiomC{$\Delta \vdash \Gamma$} 
\LeftLabel{\textsc{(WL)}}
\UnaryInfC{$\Delta, A \vdash \Gamma$}
\DisplayProof \ \
\AxiomC{$\Delta \vdash \Gamma$}
\LeftLabel{\textsc{(WR)}}
\UnaryInfC{$\Delta \vdash B, \Gamma$}
\DisplayProof \\
&\AxiomC{$\Delta, A, A \vdash \Gamma$}
\LeftLabel{\textsc{(CL)}}
\UnaryInfC{$\Delta, A \vdash \Gamma$}
\DisplayProof \ \
\AxiomC{$\Delta \vdash B, B, \Gamma$}
\LeftLabel{\textsc{(CR)}}
\UnaryInfC{$\Delta \vdash B, \Gamma$}
\DisplayProof \\
&\AxiomC{$\Delta \vdash \Gamma$}
\LeftLabel{\textsc{($\top$L)}}
\UnaryInfC{$\Delta, \top \vdash \Gamma$}
\DisplayProof \ \
\AxiomC{}
\LeftLabel{\textsc{($\top$R)}}
\UnaryInfC{$\vdash \top$} 
\DisplayProof \\
&\AxiomC{}
\LeftLabel{\textsc{($\bot$L)}}
\UnaryInfC{$\bot \vdash$}
\DisplayProof \ \ 
\AxiomC{$\Delta \vdash \Gamma$}
\LeftLabel{\textsc{($\bot$R)}}
\UnaryInfC{$\Delta \vdash \bot, \Gamma$}
\DisplayProof \\
&\AxiomC{$\Delta, A_1, A_2 \vdash \Gamma$}
\LeftLabel{\textsc{($\wedge$L)}}
\UnaryInfC{$\Delta, A_1 \wedge A_2 \vdash \Gamma$}
\DisplayProof \ \ 
\AxiomC{$\Delta \vdash B_1, \Gamma$}
		\AxiomC{$\Delta \vdash B_2, \Gamma$}
	\LeftLabel{\textsc{($\wedge$R)}}
\BinaryInfC{$\Delta \vdash B_1 \wedge B_2, \Gamma$} 
\DisplayProof \\
&\AxiomC{$\Delta, A_1 \vdash \Gamma$}
		\AxiomC{$\Delta, A_2 \vdash \Gamma$}
	\LeftLabel{\textsc{($\vee$L)}}
\BinaryInfC{$\Delta, A_1 \vee A_2 \vdash \Gamma$}
\DisplayProof \ \ 
\AxiomC{$\Delta \vdash B_1, B_2, \Gamma$}
\LeftLabel{\textsc{($\vee$R)}}
\UnaryInfC{$\Delta \vdash B_1 \vee B_2, \Gamma$}
\DisplayProof \\ 
&\AxiomC{$\Delta \vdash A, \Gamma$}
	\AxiomC{$\Delta, B \vdash \Gamma$}
	\LeftLabel{\textsc{($\Rightarrow$L)}}
\BinaryInfC{$\Delta, A \Rightarrow B \vdash \Gamma$}
\DisplayProof \ \ 
\AxiomC{$A \vdash B, \Gamma$}
\LeftLabel{\textsc{($\Rightarrow$R$^-$)}}
\UnaryInfC{$\vdash A \Rightarrow B, \Gamma$}
\DisplayProof
\end{align*}
\end{center}
\caption{The sequent calculus LK$^-$ for CL}
\label{FigLK}
\end{figure}
\end{definition}

In Sect.~\ref{CutElimSoundnessAndCompleteness}, we give a \emph{cut-elimination} procedure \cite{gentzen1935untersuchungen,troelstra2000basic} on LK$^-$ by \emph{normalization-by-evaluation (NBE)} \cite{berger1991inverse}, exploiting the \emph{fully complete} game semantics given in Sect.~\ref{GameSemantics}.

If one identifies sequents \emph{up to currying}, which is implicitly assumed by the \emph{one-sided} calculus for CL \cite{troelstra2000basic} and justified by the game semantics, the change of $\Rightarrow$R into $\Rightarrow$R$^-$ is not a real restriction. 
In this sense, LK$^-$ is equivalent to LK.

\begin{definition}[LJ \cite{gentzen1935untersuchungen,troelstra2000basic}]
The calculus \emph{\bfseries LJ} for IL consists of the rules of LK that have only \emph{\bfseries intuitionistic} sequents, i.e., ones such that the number of formulas on the RHS is $\leqslant 1$.
\end{definition}



\subsection{Sequent Calculi for Linear Logic}
In the present work, let us call the sequent calculi for LL and ILL \cite{girard1987linear,troelstra2000basic} \emph{LLK} and \emph{LLJ}, respectively.
As mentioned in the introduction, LL and LLK are \emph{concurrent} and \emph{polarized}; thus, we introduce the following \emph{sequential}, \emph{unpolarized} fragment: 

\begin{notation}
Given $n \in \mathbb{N}$, we define $\overline{n} \stackrel{\mathrm{df. }}{=} \{ 1, 2, \dots, n \}$.
\end{notation}

\begin{definition}[LL$\boldsymbol{^-}$ and LLK$\boldsymbol{^-}$]
\label{DefLLKMinus}
The formal language of \emph{\bfseries linear logic negative (LL$\boldsymbol{^-}$)} is obtained from that of LL by discarding linear negation $(\_)^\bot$. 
\if0
\begin{align*}
A, B &\stackrel{\mathrm{df. }}{=} \top \mid \bot \mid 1 \mid 0 \mid A \otimes B \mid A \invamp B \mid A \& B \mid A \oplus B \mid \\
& \ \ \ \ A \multimap B \mid \oc A \mid \wn A.
\end{align*}
\fi
The calculus \emph{\bfseries LLK${\boldsymbol{^-}}$} for LL$^-$ consists of the rules in Fig.~\ref{FigLLKNegative}, where $f (A_1, A_2, \dots, A_k) \stackrel{\mathrm{df. }}{=} f A_1, f A_2, \dots, f A_k$ for all $f \in \{ \oc, \wn \}$.
\begin{figure}[h]
\begin{center}
\begin{align*}
&\AxiomC{$\Delta, \wn \oc A \vdash \Gamma$}
\LeftLabel{\textsc{($\oc\wn$L)}}
\UnaryInfC{$\Delta, \oc \wn A \vdash \Gamma$}
\DisplayProof \ \
\AxiomC{$\Delta \vdash \oc \wn B, \Gamma$}
\LeftLabel{\textsc{($\oc\wn$R)}}
\UnaryInfC{$\Delta \vdash \wn \oc B, \Gamma$}
\DisplayProof \\
&\AxiomC{$\Delta, (A \otimes B) \invamp C \vdash \Gamma$}
\LeftLabel{\textsc{($\otimes\invamp$L)}}
\UnaryInfC{$\Delta, A, B \invamp C \vdash \Gamma$}
\DisplayProof \ \
\AxiomC{$\Delta \vdash A \otimes (B \invamp C), \Gamma$}
\LeftLabel{\textsc{($\otimes\invamp$R)}}
\UnaryInfC{$\Delta \vdash A \otimes B, C, \Gamma$}
\DisplayProof \\
&\AxiomC{}
\LeftLabel{\textsc{(Id)}}
\UnaryInfC{$A \vdash A$}
\DisplayProof \ \ 
\AxiomC{$\Delta \vdash B$}
	\AxiomC{$\Delta', B \vdash \Gamma'$}
	\LeftLabel{\textsc{(Cut$^-$)}}
\BinaryInfC{$\Delta, \Delta' \vdash  \Gamma' $}
\DisplayProof \\ 
&\AxiomC{$\Delta, A, A', \Delta' \vdash \Gamma$}
\LeftLabel{\textsc{(XL)}}
\UnaryInfC{$\Delta, A', A, \Delta' \vdash \Gamma$}
\DisplayProof \ \ 
\AxiomC{$\Delta \vdash \Gamma, B, B', \Gamma'$}
\LeftLabel{\textsc{(XR)}}
\UnaryInfC{$ \Delta \vdash \Gamma, B', B, \Gamma'$}
\DisplayProof \\
&\AxiomC{$\Delta \vdash \Gamma$}
\LeftLabel{\textsc{($\oc$W)}}
\UnaryInfC{$\Delta, \oc A \vdash \Gamma$}
\DisplayProof \ \
\AxiomC{$\Delta \vdash \Gamma$}
\LeftLabel{\textsc{($\wn$W)}}
\UnaryInfC{$\Delta \vdash \wn B, \Gamma$}
\DisplayProof \\
&\AxiomC{$\Delta, \oc A, \oc A \vdash \Gamma$}
\LeftLabel{\textsc{($\oc$C)}}
\UnaryInfC{$\Delta, \oc A \vdash \Gamma$}
\DisplayProof \ \
\AxiomC{$\Delta \vdash \wn B, \wn B, \Gamma$}
\LeftLabel{\textsc{($\wn$C)}}
\UnaryInfC{$\Delta \vdash \wn B, \Gamma$}
\DisplayProof \\ 
&\AxiomC{$\Delta, A \vdash \Gamma$}
\LeftLabel{\textsc{($\oc$D)}}
\UnaryInfC{$\Delta, \oc A \vdash \Gamma$}
\DisplayProof \ \
\AxiomC{$\Delta \vdash B, \Gamma$}
\LeftLabel{\textsc{($\wn$D)}}
\UnaryInfC{$\Delta \vdash \wn B, \Gamma$}
\DisplayProof \\
&\AxiomC{$\oc \Delta, A \vdash \wn \Gamma$}
\LeftLabel{\textsc{($\wn$L)}}
\UnaryInfC{$\oc \Delta, \wn A \vdash \wn \Gamma$}
\DisplayProof \ \ 
\AxiomC{$\oc \Delta \vdash B, \wn \Gamma$}
\LeftLabel{\textsc{($\oc$R)}}
\UnaryInfC{$\oc \Delta \vdash \oc B, \wn \Gamma$}
\DisplayProof \\
&\AxiomC{}
\LeftLabel{\textsc{($0$L)}}
\UnaryInfC{$0 \vdash \Gamma$}
\DisplayProof \ \ 
\AxiomC{}
\LeftLabel{\textsc{($1$R)}}
\UnaryInfC{$\Delta \vdash 1, \Gamma$}
\DisplayProof \\
&\AxiomC{$\Delta \vdash \Gamma$}
\LeftLabel{\textsc{($\top$L)}}
\UnaryInfC{$ \Delta, \top \vdash \Gamma$}
\DisplayProof \ \
\AxiomC{}
\LeftLabel{\textsc{($\top$R)}}
\UnaryInfC{$\vdash \top$}
\DisplayProof \ \
\AxiomC{}
\LeftLabel{\textsc{($\bot$L)}}
\UnaryInfC{$\bot \vdash$}
\DisplayProof \ \
\AxiomC{$\Delta \vdash \Gamma$}
\LeftLabel{\textsc{($\bot$R)}}
\UnaryInfC{$\Delta \vdash \bot, \Gamma$}
\DisplayProof \\
&\AxiomC{$\Delta, A_1, A_2 \vdash \Gamma$}
\LeftLabel{\textsc{($\otimes$L)}}
\UnaryInfC{$\Delta, A_1 \otimes A_2 \vdash \Gamma$}
\DisplayProof \ \ 
\AxiomC{$\Delta_1\vdash B_1$}
		\AxiomC{$\Delta_2 \vdash B_2$}
	\LeftLabel{\textsc{($\otimes$R$^-$)}}
\BinaryInfC{$\Delta_1, \Delta_2 \vdash B_1 \otimes B_2$}
\DisplayProof \\ 
&\AxiomC{$\Delta, A_i \vdash \Gamma$}
		\AxiomC{$i \in \overline{2}$}
	\LeftLabel{\textsc{($\&$L)}}
\BinaryInfC{$\Delta, A_1 \& A_2 \vdash \Gamma$}
\DisplayProof \ \
\AxiomC{$\Delta \vdash B_1$}
		\AxiomC{$\Delta \vdash B_2$}
	\LeftLabel{\textsc{($\&$R$^-$)}}
\BinaryInfC{$\Delta \vdash B_1 \& B_2$}
\DisplayProof \\ 
&\AxiomC{$A_1 \vdash \Gamma_1$}
		\AxiomC{$A_2 \vdash \Gamma_2$}
	\LeftLabel{\textsc{($\invamp$L$^-$)}}
\BinaryInfC{$A_1 \invamp A_2 \vdash \Gamma_1, \Gamma_2$}
\DisplayProof \ \
\AxiomC{$\Delta \vdash B_1, B_2, \Gamma$}
\LeftLabel{\textsc{($\invamp$R)}}
\UnaryInfC{$\Delta \vdash B_1 \invamp B_2, \Gamma$}
\DisplayProof \\
&\AxiomC{$A_1 \vdash \Gamma$}
		\AxiomC{$A_2 \vdash \Gamma$}
	\LeftLabel{\textsc{($\oplus$L$^-$)}}
\BinaryInfC{$A_1 \oplus A_2 \vdash \Gamma$}
\DisplayProof \ \
\AxiomC{$\Delta \vdash B_i, \Gamma$}
		\AxiomC{$i \in \overline{2}$}
	\LeftLabel{\textsc{($\oplus$R)}}
\BinaryInfC{$\Delta \vdash B_1 \oplus B_2, \Gamma$}
\DisplayProof \\ 
&\AxiomC{$\Delta_1 \vdash A, \Gamma_1$}
	\AxiomC{$\Delta_2, B \vdash \Gamma_2$}
	\LeftLabel{\textsc{($\multimap$L)} }
\BinaryInfC{$\Delta_1, \Delta_2, A \multimap B \vdash \Gamma_1, \Gamma_2$}
\DisplayProof \ \ 
\AxiomC{$A \vdash B, \Gamma$}
\LeftLabel{\textsc{($\multimap$R$^-$)}}
\UnaryInfC{$\vdash A \multimap B, \Gamma$}
\DisplayProof 
\end{align*}
\end{center}
\caption{The sequent calculus LLK$^-$ for LL$^-$}
\label{FigLLKNegative}
\end{figure}
\end{definition}

\if0
\begin{remark}
We switch between the traditional notations for \emph{top} and \emph{one} \cite{girard1987linear} for our \emph{unity} of logic: In terms of categorical logic in Sect.~\ref{CategoricalSemantics}, one and zero are terminal and initial objects, and top and bottom are units w.r.t. $\otimes$ and $\invamp$, respectively.
\end{remark}
\fi

That is, LLK$^-$ is obtained from LLK by discarding linear negation $(\_)^\bot$, restricting the rules $(\textsc{Cut}) \frac{ \ \Delta \vdash B, \Gamma \ \ \ \ \Delta', B \vdash \Gamma' \ }{ \ \Delta, \Delta' \vdash \Gamma, \Gamma' \ }$, $(\textsc{$\otimes$R}) \frac{ \ \Delta_1 \vdash B_1, \Gamma_1 \ \ \ \ \Delta_2 \vdash B_2, \Gamma_2 \ }{ \ \Delta_1, \Delta_2 \vdash B_1 \otimes B_2, \Gamma_1, \Gamma_2 \ }$, $(\textsc{$\invamp$L}) \frac{ \ \Delta_1, A_1 \vdash \Gamma_1 \ \ \ \ \Delta_2, A_2 \vdash \Gamma_2 \ }{ \ \Delta_1, \Delta_2, A_1 \invamp A_2 \vdash \Gamma_1, \Gamma_2 \ }$, $(\textsc{$\&$R}) \frac{ \ \Delta \vdash B_1, \Gamma \ \ \ \ \Delta \vdash B_2, \Gamma \ }{ \ \Delta \vdash B_1 \& B_2, \Gamma \ }$, $(\textsc{$\oplus$L}) \frac{ \ \Delta, A_1 \vdash \Gamma \ \ \ \ \Delta, A_2 \vdash \Gamma \ }{ \ \Delta, A_1 \oplus A_2 \vdash \Gamma \ }$ and $(\textsc{$\multimap$R}) \frac{ \ \Delta, A \vdash B, \Gamma \ }{ \ \Delta \vdash A \multimap B, \Gamma \ }$, respectively, to Cut$^-$, $\otimes$R$^-$, $\invamp$L$^-$, $\&$R$^-$, $\oplus$L$^-$ and $\multimap$R$^-$ given in Fig.~\ref{FigLLKNegative}, and adding the \emph{\bfseries distribution rules} $\oc\wn$L, $\oc\wn$R, $\otimes\invamp$L and $\otimes\invamp$R.
It is easy to see that Cut, $\otimes$R and $\invamp$L are derivable in LLK$^-$ in the presence of $\otimes\invamp$L and $\otimes\invamp$R.
For example, Cut is derived in LLK$^-$ by:
\begin{equation*}
\AxiomC{}
\doubleLine
\UnaryInfC{$\Delta' \vdash \otimes \Delta'$}
			\AxiomC{$\Delta \vdash B, \Gamma$} 
			\doubleLine
			\UnaryInfC{$\Delta \vdash \invamp (B, \Gamma)$}
		\BinaryInfC{$\Delta', \Delta \vdash (\otimes \Delta') \otimes (\invamp (B, \Gamma))$}
		\UnaryInfC{$\Delta', \Delta \vdash \otimes (\Delta', B) \invamp (\invamp \Gamma)$}
				\AxiomC{$\Delta', B \vdash \Gamma'$}
				\doubleLine
				\UnaryInfC{$\otimes (\Delta', B) \vdash \Gamma'$}
						\AxiomC{}
						\doubleLine
						\UnaryInfC{$\invamp \Gamma \vdash \Gamma$}
					\BinaryInfC{$\otimes (\Delta', B) \invamp (\invamp \Gamma) \vdash \Gamma', \Gamma$}
			\BinaryInfC{$\Delta', \Delta \vdash \Gamma', \Gamma$}
			\doubleLine
			\UnaryInfC{$\Delta, \Delta' \vdash \Gamma, \Gamma'$}
		\DisplayProof
\end{equation*}
where the double line indicates a multiple application of rules, $\otimes \boldsymbol{\epsilon} \stackrel{\mathrm{df. }}{=} \top$, $\otimes (\Delta, A) \stackrel{\mathrm{df. }}{=} (\otimes \Delta) \otimes A$, $\invamp \boldsymbol{\epsilon} \stackrel{\mathrm{df. }}{=} \bot$ and $\invamp (\Gamma, B) \stackrel{\mathrm{df. }}{=} (\invamp \Gamma) \invamp B$.
These \emph{derived} Cut, $\otimes$R and $\invamp$L faithfully represent our categorical semantics given in Sect.~\ref{CategoricalSemantics}.

\if0
Similarly, we get $\otimes$R from $\otimes$R$^-$ by:
\begin{equation*}
\AxiomC{$\Delta_1 \vdash B_1, \Gamma_1$} 
\doubleLine
\UnaryInfC{$\Delta_1 \vdash B_1 \invamp (\invamp \Gamma_1)$}
		\AxiomC{$\Delta_2 \vdash B_2, \Gamma_2$}
		\doubleLine
		\UnaryInfC{$\Delta_2 \vdash B_2 \invamp (\invamp \Gamma_2)$}
	\BinaryInfC{$\Delta_1, \Delta_2 \vdash (B_1 \invamp (\invamp \Gamma_1)) \otimes (B_2 \invamp (\invamp \Gamma_2))$}
	\UnaryInfC{$\Delta_1, \Delta_2 \vdash ((B_1 \invamp (\invamp \Gamma_1)) \otimes B_2) \invamp (\invamp \Gamma_2)$}
	\doubleLine
	\UnaryInfC{$\Delta_1, \Delta_2 \vdash (B_2 \otimes (B_1 \invamp (\invamp \Gamma_1))) \invamp (\invamp \Gamma_2)$}
	\UnaryInfC{$\Delta_1, \Delta_2 \vdash ((B_2 \otimes B_1) \invamp (\invamp \Gamma_1)) \invamp (\invamp \Gamma_2)$}
	\doubleLine
	\UnaryInfC{$\Delta_1, \Delta_2 \vdash B_2 \otimes B_1, \Gamma_1, \Gamma_2$}
	\DisplayProof
\end{equation*}
and dually $\invamp$L from $\invamp$L$^-$.

\begin{equation*}
\AxiomC{$\Delta_1, A_1 \vdash \Gamma_1$}
\doubleLine
\UnaryInfC{$(\otimes \Delta_1) \otimes A_1 \vdash \Gamma_1$}
		\AxiomC{$\Delta_2, A_2 \vdash \Gamma_2$}
		\doubleLine
		\UnaryInfC{$(\otimes \Delta_2) \otimes A_2 \vdash \Gamma_2$}
	\BinaryInfC{$((\otimes \Delta_1) \otimes A_1) \invamp ((\otimes \Delta_2) \otimes A_2) \vdash \Gamma_1, \Gamma_2$}
	\UnaryInfC{$(\otimes \Delta_1) \otimes (A_1 \invamp ((\otimes \Delta_2) \otimes A_2)) \vdash \Gamma_1, \Gamma_2$}
	\UnaryInfC{$(\otimes \Delta_1) \otimes (((\otimes \Delta_2) \otimes A_2) \invamp A_1) \vdash \Gamma_1, \Gamma_2$}
	\UnaryInfC{$(\otimes \Delta_1) \otimes ((\otimes \Delta_2) \otimes (A_2 \invamp A_1)) \vdash \Gamma_1, \Gamma_2$}
	\doubleLine
	\UnaryInfC{$\Delta_1, \Delta_2, A_1 \invamp A_2 \vdash \Gamma_1, \Gamma_2$}
\DisplayProof
\end{equation*}
\fi

On the other hand, the remaining two distribution rules $\oc\wn$L and $\oc\wn$R enable us to translate LK$^-$ into LLK$^-$:
\begin{theorem}[Translation of LK$^-$ into LLK$^-$]
\label{ThmTranslationOfCLIntoLL}
There is a translation $\mathscr{T}_{\mathsf{c}}$ of formulas and proofs that assigns, to every proof $p$ of a sequent $\Delta \vdash \Gamma$ in LK$^-$, a proof $\mathscr{T}_{\mathsf{c}}(p)$ of a sequent $\oc \mathscr{T}_{\mathsf{c}}^\ast(\Delta) \vdash \wn \mathscr{T}_{\mathsf{c}}^\ast(\Gamma)$ in LLK$^-$, where $\mathscr{T}_{\mathsf{c}}(\top) \stackrel{\mathrm{df. }}{=} \top$, $\mathscr{T}_{\mathsf{c}}(\bot) \stackrel{\mathrm{df. }}{=} \bot$, $\mathscr{T}_{\mathsf{c}}(A \wedge B) \stackrel{\mathrm{df. }}{=} \wn \mathscr{T}_{\mathsf{c}}(A) \& \wn \mathscr{T}_{\mathsf{c}}(B)$, $\mathscr{T}_{\mathsf{c}}(A \vee B) \stackrel{\mathrm{df. }}{=} \oc \mathscr{T}_{\mathsf{c}}(A) \oplus \oc \mathscr{T}_{\mathsf{c}}(B)$ and $\mathscr{T}_{\mathsf{c}}(A \Rightarrow B) \stackrel{\mathrm{df. }}{=} \oc \wn \mathscr{T}_{\mathsf{c}}(A) \multimap \wn \oc \mathscr{T}_{\mathsf{c}}(B)$.
\end{theorem}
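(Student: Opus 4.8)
The plan is to prove the theorem by induction on the structure of the given LK$^-$-derivation $p$, defining $\mathscr{T}_{\mathsf{c}}$ on proofs alongside its action on formulas (as stipulated) and extending it pointwise to contexts, so that $\mathscr{T}_{\mathsf{c}}^\ast(A_1,\dots,A_n) \stackrel{\mathrm{df.}}{=} \mathscr{T}_{\mathsf{c}}(A_1),\dots,\mathscr{T}_{\mathsf{c}}(A_n)$ and $\oc\mathscr{T}_{\mathsf{c}}^\ast(\Delta)$, $\wn\mathscr{T}_{\mathsf{c}}^\ast(\Gamma)$ abbreviate the pointwise applications of $\oc$ and $\wn$. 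For each rule of LK$^-$ (Fig.~\ref{FigLK}) I would exhibit an LLK$^-$-derivation scheme that consumes the translations of the premises, supplied by the induction hypothesis, and yields the translation of the conclusion; $\mathscr{T}_{\mathsf{c}}(p)$ is then assembled by grafting these blocks together along the shape of $p$.

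Before treating the rules I would collect a small toolkit of derivable LLK$^-$-sequents and admissible rules to be reused throughout: the dereliction facts $\oc A \vdash A$, $A \vdash \wn A$ and $\oc A \vdash \wn A$ from $\oc$D, $\wn$D and Id; the promotions $\oc A \vdash \oc \wn A$ and $\wn \oc A \vdash \wn A$ from $\oc$R, $\wn$L; idempotency $\oc \oc A \cong \oc A$ and $\wn \wn A \cong \wn A$; the new-Seely isomorphisms $\oc(A \& B) \cong \oc A \otimes \oc B$, $\oc \top \cong \top$ together with their duals $\wn(A \oplus B) \cong \wn A \invamp \wn B$, $\wn 0 \cong \bot$; the modal commutation $\oc \wn A \vdash \wn \oc A$, read off directly from $\oc\wn$L and $\oc\wn$R; and the full rules Cut, $\otimes$R, $\invamp$L, $\&$R and $\oplus$L, which are derivable from their restricted forms and the distribution rules $\otimes\invamp$L, $\otimes\invamp$R exactly as Cut was obtained in the derivation displayed above (after merging the contexts $\Delta$ and $\Gamma$ into single formulas $\otimes \Delta$ and $\invamp \Gamma$). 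With these in hand the remaining work is the shuffling of exponentials.

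The cases then fall into three groups. The \emph{structural} rules are immediate and express the conceptual point of the translation: XL/XR map to XL/XR, and WL, WR, CL, CR map respectively to $\oc$W, $\wn$W, $\oc$C, $\wn$C, because placing every antecedent under $\oc$ and every succedent under $\wn$ turns the classical freedom to weaken and contract \emph{on both sides} into the exponential structural rules. The \emph{units} $\top,\bot$ and the \emph{axiom/cut} are handled by Id composed with dereliction and by the derived Cut. The genuinely delicate cases are the \emph{logical} rules $\wedge$L, $\wedge$R, $\vee$L, $\vee$R, $\Rightarrow$L and $\Rightarrow$R$^-$: here $\mathscr{T}_{\mathsf{c}}$ deliberately decorates the immediate subformulas with extra exponentials (the $\wn$'s in $\wn\mathscr{T}_{\mathsf{c}}(A) \& \wn\mathscr{T}_{\mathsf{c}}(B)$, the $\oc$'s in $\oc\mathscr{T}_{\mathsf{c}}(A) \oplus \oc\mathscr{T}_{\mathsf{c}}(B)$, and the $\oc\wn$, $\wn\oc$ in $\oc\wn\mathscr{T}_{\mathsf{c}}(A) \multimap \wn\oc\mathscr{T}_{\mathsf{c}}(B)$), and the derivation must reconcile these against the exponentials that are liberated when the outer $\oc$ or $\wn$ of the translated principal formula is decomposed. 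For the additive rules I would use the new-Seely isomorphisms together with $\oc$C/$\wn$C to duplicate the principal formula and $\&$L/$\oplus$R to project it, and for the two implication rules the rules $\multimap$L, $\multimap$R$^-$ together with $\oc\wn$L, $\oc\wn$R, which convert $\oc\wn\mathscr{T}_{\mathsf{c}}(A)$ into $\wn\oc\mathscr{T}_{\mathsf{c}}(A)$ — precisely the purpose for which those distribution rules were introduced.

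I expect the main obstacle to be exactly this modal bookkeeping in the additive and implication cases. The naive attempt to rewrite the translated premise sequent into the translated conclusion sequent by cuts is blocked, because the matching one step would demand an unavailable co-dereliction $\wn X \vdash X$ or co-promotion $X \vdash \oc X$ (e.g. when a subformula is itself a disjunction); so the content of the theorem lies in routing around this. My intended remedy is to strengthen the induction hypothesis into a \emph{lemma on image formulas}, proved by a parallel induction on formulas, that records for each $\mathscr{T}_{\mathsf{c}}(A)$ exactly the derivable modal (in)equations between $\oc\mathscr{T}_{\mathsf{c}}(A)$, $\oc\wn\mathscr{T}_{\mathsf{c}}(A)$, $\wn\mathscr{T}_{\mathsf{c}}(A)$ and $\wn\oc\mathscr{T}_{\mathsf{c}}(A)$ that the logical rules actually consume, so that the principal formula can be built up by contraction-then-projection rather than by a single local substitution. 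I would verify the $\Rightarrow$R$^-$ case first, since $\multimap$R$^-$ is the one restricted rule that the distribution rules are \emph{not} meant to recover, making it the tightest constraint on the translation and the most informative test of whether the chosen modal decorations suffice.
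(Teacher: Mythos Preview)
Your overall strategy (induction on the LK$^-$-derivation, translating each rule by an LLK$^-$ block) is exactly what the paper does, and your treatment of the structural rules, units, Id, Cut, and the two implication rules matches the paper's essentially line for line. The gap is in your toolkit and in the additive cases.

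You assert that the full $\&$R and $\oplus$L are derivable in LLK$^-$ from their restricted forms and $\otimes\invamp$L/R, ``exactly as Cut was obtained''. This is false. The $\otimes\invamp$ distributions are purely multiplicative; recovering full $\&$R would need the additive distributivity $(X\invamp Z)\&(Y\invamp Z)\vdash (X\&Y)\invamp Z$, and if you try to derive that sequent you will see that the last step is forced to be full $\&$R itself. So you cannot simply apply a derived $\&$R to the translated premises $\oc\Delta\vdash\wn B_i,\wn\Gamma$ and then $\wn$D. Your fallback --- a ``lemma on image formulas'' recording modal (in)equations among $\oc\mathscr{T}_{\mathsf{c}}(A)$, $\wn\mathscr{T}_{\mathsf{c}}(A)$, $\oc\wn\mathscr{T}_{\mathsf{c}}(A)$, $\wn\oc\mathscr{T}_{\mathsf{c}}(A)$ --- is about a single formula and does not touch this $\&/\invamp$ distributivity problem, so it will not close the gap either.

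The paper's fix is different and quite specific: it first shows that the LK$^-$-admissible distributivity steps $(\wedge\vee\text{L})\ \frac{\Delta,(A\wedge B)\vee C\vdash\Gamma}{\Delta,A\wedge(B\vee C)\vdash\Gamma}$ and $(\wedge\vee\text{R})\ \frac{\Delta\vdash A\wedge(B\vee C),\Gamma}{\Delta\vdash(A\wedge B)\vee C,\Gamma}$ themselves translate into LLK$^-$ (using $\otimes\invamp$L/R together with $\wn$L and $\oc$R, which is where the exponential structure of the translation is essential). With these available, $\wedge$R and $\vee$L reduce to their context-free forms $\wedge$R$^-$ and $\vee$L$^-$, whose translations are the one-liners you would expect ($\&$R$^-$ then $\wn$D; $\oplus$L$^-$ then $\oc$D). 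That reduction step is the missing idea in your proposal.
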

\begin{proof}
We shall translate each rule of LK$^-$ into a proof tree in LLK$^-$.
First, note that we have shown that Cut is admissible in LLK$^-$; then, Cut of LK$^-$ is translated into LLK$^-$ by:
\begin{equation*}
\AxiomC{$\oc \Delta \vdash \wn B, \wn \Gamma$}
\UnaryInfC{$\oc \Delta \vdash \oc \wn B, \wn \Gamma$}
\UnaryInfC{$\oc \Delta \vdash \wn \oc B, \wn \Gamma$}
		\AxiomC{$\oc \Delta', \oc B \vdash \wn \Gamma'$}
		\UnaryInfC{$\oc \Delta', \wn \oc B \vdash \wn \Gamma'$}
		\doubleLine
	\BinaryInfC{$\oc \Delta, \oc \Delta' \vdash \wn \Gamma, \wn \Gamma'$}
	\DisplayProof
\end{equation*}

WL, WR, CL, CR, XL and XR of LK$^-$ are translated, by $\oc$W, $\wn$W, $\oc$C, $\wn$C, XL and XR of LLK$^-$, respectively, and Id of LK$^-$ by Id, $\oc$D and $\wn$D of LLK$^-$, in the obvious manner.

$\top$L and $\top$R of LK$^-$ are translated by: 
\begin{equation*}
\AxiomC{$\oc \Delta \vdash \wn \Gamma$}
\UnaryInfC{$\oc \Delta, \top \vdash \wn \Gamma$}
\UnaryInfC{$\oc \Delta, \oc \top \vdash \wn \Gamma$} \
\DisplayProof \ \ \ \ 
\AxiomC{$\vdash \top$}
\UnaryInfC{$\vdash \wn \top$}
\DisplayProof
\end{equation*}
respectively into LLK$^-$; $\bot$L and $\bot$R are symmetric.

$\wedge$L of LK$^-$ is translated into LLK$^-$ by: 
\begin{equation*}
\AxiomC{$\oc \Delta, \oc A_1, \oc A_2 \vdash \wn \Gamma$}
\doubleLine
\UnaryInfC{$\oc \Delta, \wn \oc A_1, \wn \oc A_2 \vdash \wn \Gamma$}
\doubleLine
\UnaryInfC{$\oc \Delta, \oc \wn A_1, \oc \wn A_2 \vdash \wn \Gamma$}
\UnaryInfC{$\oc \Delta, \oc \wn A_1 \otimes \oc \wn A_2 \vdash \wn \Gamma$}
\LeftLabel{\textsc{(Sub$^{\otimes, \&}_{\wn A_1, \wn A_2}$)}}
\UnaryInfC{$\oc \Delta, \oc (\wn A_1 \& \wn A_2) \vdash \wn \Gamma$}
\DisplayProof
\end{equation*}
where Sub$^{\otimes, \&}_{X, Y}$ is Cut$^-$ with:
\begin{equation*}
\AxiomC{$X \vdash X$}
\UnaryInfC{$X \& Y \vdash X$}
\UnaryInfC{$\oc (X \& Y) \vdash X$}
\UnaryInfC{$\oc (X \& Y) \vdash \oc X$}
\AxiomC{$Y \vdash Y$}
\UnaryInfC{$X \& Y \vdash Y$}
\UnaryInfC{$\oc (X \& Y) \vdash Y$}
\UnaryInfC{$\oc (X \& Y) \vdash \oc Y$}
\BinaryInfC{$\oc (X \& Y), \oc (X \& Y) \vdash \oc X \otimes \oc Y$}
\UnaryInfC{$\oc (X \& Y) \vdash \oc X \otimes \oc Y$}
\DisplayProof
\end{equation*}

Next, it is not hard to translate $\textsc{($\wedge\vee$L)}\frac{ \ \Delta, (A \wedge B) \vee C \vdash \Gamma \ }{ \ \Delta, A \wedge (B \vee C) \vdash \Gamma \ }$ and $\textsc{($\wedge\vee$R)}\frac{ \ \Delta \vdash A \wedge (B \vee C), \Gamma \ }{ \ \Delta \vdash (A \wedge B) \vee C, \Gamma \ }$ into LLK$^-$ (by $\otimes\invamp$L, $\otimes\invamp$R, $\wn$L and $\oc$R); we omit the details for lack of space.
Thus, translations of $\wedge$R and $\vee$L are reduced to those of $\textsc{($\wedge$R$^-$)}\frac{ \ \Delta \vdash B_1 \ \ \ \ \Delta \vdash B_2 \ }{ \ \Delta \vdash B_1 \wedge B_2 \ }$ and $\textsc{($\vee$L$^-$)}\frac{ \ A_1 \vdash \Gamma \ \ \ \ A_2 \vdash \Gamma \ }{ \ A_1 \vee A_2 \vdash \Gamma \ }$, respectively, in the obvious way. 

Then, a translation of $\wedge$R$^-$ is very simple: 
\begin{equation*}
\AxiomC{$\oc \Delta \vdash \wn B_1$}
\AxiomC{$\oc \Delta \vdash \wn B_2$}
\BinaryInfC{$\oc \Delta \vdash \wn B_1 \& \wn B_2$}
\UnaryInfC{$\oc \Delta \vdash \wn (\wn B_1 \& \wn B_2)$}
\DisplayProof
\end{equation*}

Similarly, $\vee$L$^-$ is translated by: 
\begin{equation*}
\AxiomC{$\oc A_1 \vdash \wn \Gamma$}
\AxiomC{$\oc A_2 \vdash \wn \Gamma$}
\BinaryInfC{$\oc A_1 \oplus \oc A_2 \vdash \wn \Gamma$}
\UnaryInfC{$\oc (\oc A_1 \oplus \oc A_2) \vdash \wn \Gamma$}
\DisplayProof
\end{equation*}

Also, $\vee$R is translated by:
\begin{equation*}
\AxiomC{$\oc \Delta \vdash \wn B_1, \wn B_2, \wn \Gamma$}
\doubleLine
\UnaryInfC{$\oc \Delta, \vdash \oc \wn B_1, \oc \wn B_2, \wn \Gamma$}
\doubleLine
\UnaryInfC{$\oc \Delta, \vdash \wn \oc B_1, \wn \oc B_2, \wn \Gamma$}
\doubleLine
\UnaryInfC{$\oc \Delta, \vdash (\wn \oc B_1 \invamp \wn \oc B_2) \invamp \wn \Gamma$}
\LeftLabel{\textsc{(Sub$^{\oplus, \invamp}_{\oc B_1, \oc B_2} \invamp \wn \Gamma$)}}
\UnaryInfC{$\oc \Delta \vdash \wn (\oc B_1 \oplus \oc B_2), \wn \Gamma$}
\DisplayProof
\end{equation*}
where Sub$^{\oplus, \invamp}_{X, Y} \invamp Z$ is Cut$^-$ with:
\begin{equation*}
\AxiomC{$X \vdash X$}
\UnaryInfC{$X \vdash X \oplus Y$}
\UnaryInfC{$X \vdash \wn (X \oplus Y)$}
\UnaryInfC{$\wn X \vdash \wn (X \oplus Y)$}
		\AxiomC{$Y \vdash Y$}
		\UnaryInfC{$Y \vdash X \oplus Y$}
		\UnaryInfC{$Y \vdash \wn (X \oplus Y)$}
		\UnaryInfC{$\wn Y \vdash \wn (X \oplus Y)$}
	\BinaryInfC{$\wn X \invamp \wn Y \vdash \wn (X \oplus Y), \wn (X \oplus Y)$}
	\UnaryInfC{$\wn X \invamp \wn Y \vdash \wn (X \oplus Y)$}
		\AxiomC{$Z \vdash Z$}
	\BinaryInfC{$(\wn X \invamp \wn Y) \invamp Z \vdash \wn (X \oplus Y), Z$}
\DisplayProof
\end{equation*}

Next, $\Rightarrow$L is translated by:
\begin{equation*}
\AxiomC{$\oc \Delta \vdash \wn A, \wn \Gamma$}
\UnaryInfC{$\oc \Delta \vdash \oc \wn A, \wn \Gamma$}
\AxiomC{$\oc \Delta, \oc B \vdash \wn \Gamma$}
\UnaryInfC{$\oc \Delta, \wn \oc B \vdash \wn \Gamma$}
\BinaryInfC{$\oc \Delta, \oc \Delta, \oc \wn A \multimap \wn \oc B \vdash \wn \Gamma, \wn \Gamma$}
\doubleLine
\UnaryInfC{$\oc \Delta, \oc (\oc \wn A \multimap \wn \oc B) \vdash \wn \Gamma$}
\DisplayProof
\end{equation*}
and $\Rightarrow$R$^-$ by:
\begin{equation*}
\AxiomC{$\oc A \vdash \wn B, \wn \Gamma$}
\UnaryInfC{$\wn \oc A \vdash \wn B, \wn \Gamma$}
\UnaryInfC{$\oc \wn A \vdash \wn B, \wn \Gamma$}
\UnaryInfC{$\oc \wn A \vdash \oc \wn B, \wn \Gamma$}
\UnaryInfC{$\oc \wn A \vdash \wn \oc B, \wn \Gamma$}
\UnaryInfC{$\vdash \oc \wn A \multimap \wn \oc B, \wn \Gamma$}
\UnaryInfC{$\vdash \wn (\oc \wn A \multimap \wn \oc B), \wn \Gamma$}
\DisplayProof
\end{equation*}
which completes the proof.
\if0
Finally, the rule $\wedge \vee$Dst is translated into LLK$^-$ by:
\begin{equation*}
\AxiomC{}
\UnaryInfC{$A \& (B \oplus C) \vdash (A \& B) \oplus C$}
\doubleLine
\UnaryInfC{$A \& (\oc B \oplus \oc C) \vdash (\wn A \& \wn B) \oplus C$}
\doubleLine
\UnaryInfC{$\oc (A \& (\oc B \oplus \oc C)) \vdash \wn ((\wn A \& \wn B) \oplus C)$}
\doubleLine
\UnaryInfC{$\oc A, \oc (\oc B \oplus \oc C) \vdash \wn (\wn A \& \wn B), \wn C$}
\DisplayProof
\end{equation*} 
where each of the double-lined inferences is by appropriate applications of Cut$^-$, which should be obvious at this point, and the details are left to the reader, completing the proof.
\fi
\end{proof}

The translation $\mathscr{T}_{\mathsf{c}}$ of Thm.~\ref{ThmTranslationOfCLIntoLL} is, as far as we are concerned, a novel one.
In contrast to the translations of CL into LL given in \cite{girard1987linear,girard1995linear,girard1993unity,laurent2002polarized}, our translation is \emph{unpolarized}.

\if0
Note, in particular, that the translation $\mathscr{T}_{\mathsf{c}}$ of implication $\Rightarrow$ suggests that each sequent $\Delta \vdash \Gamma$ in LK$^-$ is to be translated into the sequent $\oc \wn \Delta \vdash \wn \oc \Gamma$ in LLK$^-$ (n.b., if we had defined $\mathscr{T}_{\mathsf{c}}(A \Rightarrow B) \stackrel{\mathrm{df. }}{=} \oc A \multimap \wn B$, then we could not translate the rule $\Rightarrow$L).
However, proofs of a sequent $\oc \wn \Delta \vdash \wn \oc \Gamma$ are in a one-to-one correspondence with those of the sequent $\oc \Delta \vdash \wn \Gamma$ in LLK$^-$, which we prove in Sect.~\ref{CutElimSoundnessAndCompleteness}, and their game-semantic interpretations essentially coincide in Sect.~\ref{GameSemantics}. 
Thus, we have adopted the simpler translation $\Delta \vdash_{\mathsf{LK}^-} \Gamma \mapsto \oc \Delta \vdash_{\mathsf{LLK}^-} \wn \Gamma$.
\fi

Finally, note that the following standard result (Thm.~\ref{ThmTranslationOfILIntoILL}) can be seen as the \emph{intuitionistic restriction} of Thm.~\ref{ThmTranslationOfCLIntoLL} (except $\vee$):
\begin{definition}[LLJ \cite{abramsky1993computational,mellies2009categorical}]
\label{DefLLKMinus}
The formal language of ILL is obtained from that of LL by discarding $(\_)^\bot$, $\wn$ and $\invamp$.
The calculus \emph{\bfseries LLJ} for ILL consists of the rules of LLK that have only intuitionistic sequents.
\end{definition}

\begin{theorem}[Translation of LJ into LLJ \cite{girard1987linear,girard1995linear}]
\label{ThmTranslationOfILIntoILL}
There is a translation $\mathscr{T}_{\mathsf{i}}$ of formulas and proofs that assigns, to every proof $p$ of a sequent $\Delta \vdash B$ in LJ, a proof $\mathscr{T}_{\mathsf{i}}(p)$ of a sequent $\oc \mathscr{T}_{\mathsf{i}}^\ast(\Delta) \vdash \mathscr{T}_{\mathsf{i}}^\ast(\Gamma)$ in LLJ, where $\mathscr{T}_{\mathsf{i}}(\top) \stackrel{\mathrm{df. }}{=} \top$, $\mathscr{T}_{\mathsf{i}}(\bot) \stackrel{\mathrm{df. }}{=} \bot$, $\mathscr{T}_{\mathsf{i}}(A \wedge B) \stackrel{\mathrm{df. }}{=} \mathscr{T}_{\mathsf{i}}(A) \& \mathscr{T}_{\mathsf{i}}(B)$, $\mathscr{T}_{\mathsf{i}}(A \vee B) \stackrel{\mathrm{df. }}{=} \oc \mathscr{T}_{\mathsf{i}}(A) \oplus \oc \mathscr{T}_{\mathsf{i}}(B)$ and $\mathscr{T}_{\mathsf{i}}(A \Rightarrow B) \stackrel{\mathrm{df. }}{=} \oc \mathscr{T}_{\mathsf{i}}(A) \multimap \mathscr{T}_{\mathsf{i}}(B)$.
\end{theorem}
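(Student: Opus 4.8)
The plan is to induct on the structure of a given LJ-derivation $p$ of $\Delta \vdash \Gamma$ (where $\Gamma$ is at most one formula), producing for each LJ-rule an LLJ-derivation of the translated sequent $\oc \mathscr{T}_{\mathsf{i}}^\ast(\Delta) \vdash \mathscr{T}_{\mathsf{i}}^\ast(\Gamma)$, under the convention that each antecedent formula is translated and then $\oc$-prefixed while the single succedent formula (if any) is translated without decoration. Conceptually this is the specialization of the proof of Thm.~\ref{ThmTranslationOfCLIntoLL} to intuitionistic sequents: because the right-hand side carries at most one formula, there is never any right contraction or weakening to simulate, so all the $\wn$-decorations that $\mathscr{T}_{\mathsf{c}}$ places on the right collapse and the target formulas become $\wn$-free, as demanded by the language of ILL.

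The routine cases mirror those of Thm.~\ref{ThmTranslationOfCLIntoLL} with every $\wn$ erased. Id is discharged by $\oc$D applied to the LLJ-axiom; the left structural rules XL, WL, CL go to XL, $\oc$W, $\oc$C; Cut is translated by promoting its left premise with $\oc$R and then applying Cut$^-$; and the constants and the weakening rules are handled directly by the unit and zero rules of LLJ. For conjunction, $\wedge$R becomes simply $\&$R$^-$ — no right context survives, so the distribution step needed in the CL proof disappears — while $\wedge$L reuses verbatim the Seely-style derivation $\oc(X \& Y) \vdash \oc X \otimes \oc Y$ (the derivation Sub$^{\otimes,\&}$) followed by $\otimes$L and Cut$^-$. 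I expect all of these to be mechanical.

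The real work is in the two positive left rules, and especially in $\vee$L, the case singled out by the remark ``except $\vee$''. The obstacle is that the restricted rule $\oplus$L$^-$ of LLJ is context-free — this is exactly the sequentiality restriction — so the shared context $\oc \Delta$ of $\frac{\Delta, A_1 \vdash \Gamma \ \ \Delta, A_2 \vdash \Gamma}{\Delta, A_1 \vee A_2 \vdash \Gamma}$ cannot be carried along. Writing $X_i = \mathscr{T}_{\mathsf{i}}(A_i)$ and $C = \mathscr{T}_{\mathsf{i}}^\ast(\Gamma)$, the plan is to collapse the left-hand side of each translated premise $\oc \Delta, \oc X_i \vdash C$ into $(\otimes \oc \Delta) \otimes \oc X_i \vdash C$ by repeated $\otimes$L, apply $\oplus$L$^-$ to reach $((\otimes \oc \Delta) \otimes \oc X_1) \oplus ((\otimes \oc \Delta) \otimes \oc X_2) \vdash C$, and then Cut$^-$ this against the $\otimes$-over-$\oplus$ distributivity sequent $\oc \Delta, \oc X_1 \oplus \oc X_2 \vdash ((\otimes \oc \Delta) \otimes \oc X_1) \oplus ((\otimes \oc \Delta) \otimes \oc X_2)$; the latter is derivable in LLJ and is the intuitionistic, $\wn$-free counterpart of the $\wedge\vee$-distribution rules used in the CL proof. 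A final $\oc$D gives $\oc \Delta, \oc(\oc X_1 \oplus \oc X_2) \vdash C$, as required, and the dual rule $\vee$R is immediate from $\oplus$R and promotion.

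A second, related point of care is $\Rightarrow$L, where the intuitionistic translation is not a literal restriction of the classical one: since $\mathscr{T}_{\mathsf{i}}(A \Rightarrow B) = \oc \mathscr{T}_{\mathsf{i}}(A) \multimap \mathscr{T}_{\mathsf{i}}(B)$ has an undecorated codomain, $\multimap$L delivers the bare formula $\mathscr{T}_{\mathsf{i}}(B)$, whereas the induction hypothesis for $\Delta, B \vdash \Gamma$ supplies $\oc \mathscr{T}_{\mathsf{i}}(B)$ in its context; the $\wn$L step that reconciled the two decorations in the CL proof is no longer available. I would bridge this by exploiting the outer bang: from the promoted first premise $\oc \Delta \vdash \oc \mathscr{T}_{\mathsf{i}}(A)$ together with the axiom, $\multimap$L and $\oc$D yield $\oc \Delta, \oc(\oc \mathscr{T}_{\mathsf{i}}(A) \multimap \mathscr{T}_{\mathsf{i}}(B)) \vdash \mathscr{T}_{\mathsf{i}}(B)$, which $\oc$R promotes to $\oc \Delta, \oc(\oc \mathscr{T}_{\mathsf{i}}(A) \multimap \mathscr{T}_{\mathsf{i}}(B)) \vdash \oc \mathscr{T}_{\mathsf{i}}(B)$; a Cut$^-$ against the second induction hypothesis followed by $\oc$C then closes the case. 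The remaining rule $\Rightarrow$R is handled by $\multimap$R$^-$ and promotion, completing the induction. The main obstacle is thus verifying these context-absorption and re-decoration manoeuvres for the positive connectives, everything else being a $\wn$-free transcription of Thm.~\ref{ThmTranslationOfCLIntoLL}.
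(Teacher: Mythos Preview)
The paper does not actually prove this theorem: it is stated as a known result with citations to Girard, and the only commentary is the subsequent \emph{Remark} observing that $\mathscr{T}_{\mathsf{i}}$ translates $\vee$L \emph{in terms of $\oplus$L} (the full rule), whereas $\mathscr{T}_{\mathsf{c}}$ must make do with $\oplus$L$^-$ and the distribution rules.

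Your proposal rests on a misreading of Def.~\ref{DefLLKMinus} for LLJ. The calculus LLJ is defined as the intuitionistic restriction of \emph{LLK}, not of LLK$^-$; hence LLJ contains the unrestricted rules Cut, $\otimes$R, $\&$R, $\oplus$L and $\multimap$R (in their intuitionistic instances). In particular LLJ has the full context-sharing rule $(\oplus\textsc{L})\ \frac{\Delta, A_1 \vdash C \quad \Delta, A_2 \vdash C}{\Delta, A_1 \oplus A_2 \vdash C}$, so the translation of $\vee$L is immediate: from the hypotheses $\oc\Delta, \oc X_i \vdash C$ apply $\oplus$L to get $\oc\Delta, \oc X_1 \oplus \oc X_2 \vdash C$, then $\oc$D. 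This is exactly what the Remark says, and it is why $\vee$ is the exception to the claim that $\mathscr{T}_{\mathsf{i}}$ is the intuitionistic restriction of $\mathscr{T}_{\mathsf{c}}$. Your elaborate workaround via a $\otimes$-over-$\oplus$ distributivity is therefore unnecessary; worse, under your own (mistaken) hypothesis that LLJ has only $\oplus$L$^-$, that distributivity sequent would itself not be derivable, since the LLK$^-$ distribution rules all mention $\invamp$ or $\wn$ and hence lie outside ILL. So the argument would be circular under your reading and is simply superfluous under the correct one.

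The same misreading affects your treatment of $\Rightarrow$L and $\Rightarrow$R: LLJ has full Cut and full $\multimap$R, so the ``re-decoration manoeuvre'' you describe for $\Rightarrow$L, while valid, is more intricate than needed, and $\Rightarrow$R is handled by $\multimap$R, not $\multimap$R$^-$. The remaining cases of your induction are fine.
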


\begin{remark}
Note that $\mathscr{T}_{\mathsf{c}}$ translates $\vee$L in terms of $\oplus$L$^-$ by utilizing distribution rules, while $\mathscr{T}_{\mathsf{i}}$ translates $\vee$L in terms of $\oplus$L.
Nevertheless, except the mismatch between the translations of $\vee$L, $\mathscr{T}_{\mathsf{i}}$ can be seen as the intuitionistic restriction of $\mathscr{T}_{\mathsf{c}}$.
\end{remark}


\section{Categorical Semantics}
\label{CategoricalSemantics}
Next, we proceed to give \emph{categorical semantics} \cite{lambek1988introduction,jacobs1999categorical} of the sequent calculi introduced in Sect.~\ref{SequentCalculi} in a \emph{unified} manner. 

We assume that the reader is familiar with the basic concepts of \emph{\bfseries symmetric monoidal (closed) categories (SM(C)Cs)} and \emph{\bfseries monoidal adjoints} \cite{mac2013categories,bierman1995categorical}.
To indicate what is to be modeled, we frequently employ notations from LL for categorical structures in this section. 
Also, we often do not specify natural isomorphisms even if they are part of a categorical structure. 

\begin{remark}
Since cut-eliminations on the calculi are given in Sect.~\ref{CutElimSoundnessAndCompleteness}, we postpone (equational) soundness/completeness of the semantics to Sect.~\ref{CutElimSoundnessAndCompleteness}, and in this section just assign objects and morphisms to formulas and proofs, respectively.
\end{remark}

\if0
\begin{convention}
We assume that a categorical semantics $\llbracket \_ \rrbracket$ of a sequent calculus $\mathcal{L}$ in a category $\mathcal{C}$ satisfies only soundness (not completeness) by default, i.e., given a sequent $\Delta \vdash \Gamma$ in $\mathcal{L}$, each derivation $\phi$ of the sequent has, as the interpretation, a unique morphism $\llbracket \phi \rrbracket : \llbracket \Delta \rrbracket \rightarrow \llbracket \Gamma \rrbracket$ in $\mathcal{C}$, and if another derivation $\psi$ of the sequent are equivalent modulo cut-elimination, then $\llbracket \phi \rrbracket = \llbracket \psi \rrbracket$ in $\mathcal{C}$.
\end{convention}
\fi
\if0
\begin{definition}[SMCCs \cite{mac2013categories}]
A {monoidal category} is a category $\mathcal{C}$ equipped with a bifunctor $\otimes : \mathcal{C} \times \mathcal{C} \rightarrow \mathcal{C}$, called {\bfseries tensor}, an object $\top \in \mathcal{C}$, called a {\bfseries unit}, and isomorphisms $(A \otimes B) \otimes C \stackrel{\sim}{\rightarrow} A \otimes (B \otimes C)$, $\top \otimes A \stackrel{\sim}{\rightarrow} A$ and $A \otimes \top \stackrel{\sim}{\rightarrow} A$ natural in $A, B, C \in \mathcal{C}$ that satisfy the coherence conditions given in \cite{mac2013categories}.
A {\bfseries closed structure} on $\mathcal{C}$ a bifunctor $\multimap : \mathcal{C}^{\mathsf{op}} \times \mathcal{C} \rightarrow \mathcal{C}$ that satisfies $\mathcal{C}(A \otimes B, \multimap C) \cong \mathcal{C}(A, B \multimap C)$ natural in $A, B, C \in \mathcal{C}$.
A {\bfseries symmetry} on $\mathcal{C}$ is an isomorphism $A \otimes B \stackrel{\sim}{\rightarrow} B \otimes A$ natural in $A, B \in \mathcal{C}$ that satisfies the coherence conditions given in \cite{mac2013categories}.
A {\bfseries symmetric monoidal category (SMC)} (resp. a {\bfseries symmetric monoidal closed category (SMCC)}) is a monoidal category equipped with a symmetry (resp. a symmetry and a closure structure).
\end{definition}
\fi

\subsection{Categorical Semantics of ILL and IL}
Let us first recall the standard categorical semantics of LLJ (without $\bot$ or $\oplus$), introducing a nonstandard terminology: 
\begin{definition}[BwLSMCs]
\label{DefBwLSMCs}
A SMC $\mathcal{C} = (\mathcal{C}, \otimes, \top)$ is \emph{\bfseries backward liberalizable (BwL)} if it has finite products $(1, \&)$ and is equipped with:
\begin{itemize}

\item A comonad $\oc = (\oc, \epsilon, \delta)$ on $\mathcal{C}$ such that the canonical adjunction between $\mathcal{C}$ and the co-Kleisli category $\mathcal{C}_\oc$ of $\mathcal{C}$ over $\oc$ is monoidal;

\item Isomorphisms $\top \stackrel{\sim}{\rightarrow} \oc 1$ and $\oc A \otimes \oc B \stackrel{\sim}{\rightarrow} \oc (A \& B)$ natural in $A, B \in \mathcal{C}$.

\end{itemize}
\end{definition}

In other words, a BwLSMC is simply a \emph{\bfseries new-Seely category (NSC)} \cite{bierman1995categorical} without a \emph{closed} structure $\multimap$; it is just to state Thm.~\ref{LemLLNegativeWithoutDst} and Def.~\ref{DefBiLSMCCs} concisely.
Recall that NSCs give a (equationally sound and complete) semantics of ILL without $\bot$ or $\oplus$ (w.r.t. the term calculus given in \cite{benton1992term,benton1993term}): 
\begin{theorem}[Semantics of ILL without $\boldsymbol{\bot}$ or $\boldsymbol{\oplus}$ \cite{bierman1995categorical}]
NSCs give a semantics of LLJ without $\bot$ or $\oplus$.
\end{theorem}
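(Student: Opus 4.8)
The plan is to exhibit the interpretation directly: by the Remark above we need only assign objects to formulas and morphisms to proofs, deferring equational soundness to Sect.~\ref{CutElimSoundnessAndCompleteness}. Fix an NSC $\mathcal{C} = (\mathcal{C}, \otimes, \top, \multimap)$ with finite products $(1, \&)$ and comonad $\oc = (\oc, \epsilon, \delta)$. First I would interpret formulas as objects by structural recursion, sending atoms to arbitrary objects and setting $\llbracket \top \rrbracket \stackrel{\mathrm{df. }}{=} \top$, $\llbracket 1 \rrbracket \stackrel{\mathrm{df. }}{=} 1$, $\llbracket A \otimes B \rrbracket \stackrel{\mathrm{df. }}{=} \llbracket A \rrbracket \otimes \llbracket B \rrbracket$, $\llbracket A \& B \rrbracket \stackrel{\mathrm{df. }}{=} \llbracket A \rrbracket \& \llbracket B \rrbracket$, $\llbracket A \multimap B \rrbracket \stackrel{\mathrm{df. }}{=} \llbracket A \rrbracket \multimap \llbracket B \rrbracket$ and $\llbracket \oc A \rrbracket \stackrel{\mathrm{df. }}{=} \oc \llbracket A \rrbracket$; a context $\Delta = A_1, \dots, A_n$ then goes to the iterated tensor $\llbracket A_1 \rrbracket \otimes \cdots \otimes \llbracket A_n \rrbracket$, with the empty context interpreted as $\top$.

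Next I would interpret each proof of an intuitionistic sequent $\Delta \vdash B$ as a morphism $\llbracket \Delta \rrbracket \to \llbracket B \rrbracket$ by induction on the derivation, matching every rule of LLJ to a canonical construction. Id goes to the identity and Cut to composition (first tensoring the two premises and reshuffling $\Delta, \Delta'$ via the SMC coherence); XL uses the symmetry; $\otimes$L and $\otimes$R use functoriality and associativity of $\otimes$; $\multimap$L and $\multimap$R use evaluation and currying from the closed structure; $\&$L and $\&$R use the projections and the pairing of $(1, \&)$; and $1$R, $\top$L, $\top$R use the terminal map and the unit isomorphisms. The exponential rules are where the comonad and the Seely isomorphisms enter: dereliction $\oc$D uses the counit $\epsilon : \oc A \to A$; weakening $\oc$W uses $\oc A \to \oc 1 \cong \top$, namely the image under $\oc$ of the terminal map followed by the unit iso; and contraction $\oc$C uses $\oc A \to \oc(A \& B) \cong \oc A \otimes \oc A$ built from the product diagonal and the Seely iso.

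The main obstacle I anticipate is promotion $\oc$R, which must send a morphism $\oc \llbracket \Delta \rrbracket \to \llbracket B \rrbracket$ to a morphism $\oc \llbracket \Delta \rrbracket \to \oc \llbracket B \rrbracket$. Here I would exploit that the source $\oc A_1 \otimes \cdots \otimes \oc A_n$ is canonically isomorphic to $\oc (A_1 \& \cdots \& A_n)$ by iterating the Seely isomorphisms $\oc A \otimes \oc B \cong \oc (A \& B)$ and $\top \cong \oc 1$; applying the comultiplication $\delta$ together with functoriality of $\oc$ then produces the promoted morphism. The real content is that these constructions cohere---that the value assigned to a proof does not depend on the incidental order in which the context formulas are grouped, and that promotion is compatible with the monoidal structure---which is exactly what the requirement that the canonical adjunction $\mathcal{C} \leftrightarrows \mathcal{C}_\oc$ be monoidal packages. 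I would therefore discharge the coherence obligations for $\oc$R by appealing to the monoidality of this adjunction together with the naturality of the Seely isomorphisms, following Bierman~\cite{bierman1995categorical}, rather than verifying each coherence diagram by hand.
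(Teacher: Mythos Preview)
The paper does not prove this theorem at all: it is stated with a citation to Bierman~\cite{bierman1995categorical} and no proof environment follows. Your proposal is a faithful reconstruction of the standard argument from that reference, and it is correct in outline; note only the typo in contraction, where you wrote $\oc(A \& B)$ but meant $\oc(A \& A)$.
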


Recall that a strong advantage of NSCs is the following: 
\begin{theorem}[CCCs via NSCs \cite{seely1987linear,bierman1995categorical}]
\label{CoroCoprodWeakProdInFwLSMCs}
The co-Kleisli category $\mathcal{C}_\oc$ of a NSC $\mathcal{C}$ over the equipped comonad $\oc$ is cartesian closed.
\end{theorem}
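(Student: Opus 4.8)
The plan is to exhibit the cartesian closed structure on $\mathcal{C}_\oc$ explicitly and to show that it is inherited from the NSC-structure of $\mathcal{C}$ via the Seely isomorphisms. Recall that $\mathcal{C}_\oc$ has the same objects as $\mathcal{C}$, that $\mathcal{C}_\oc(A, B) = \mathcal{C}(\oc A, B)$, that the identity on $A$ is the counit $\epsilon_A$, and that the composite of $f : \oc A \to B$ and $g : \oc B \to C$ is $g \circ \oc f \circ \delta_A$. The canonical adjunction comes from the pair $G \dashv F$, where $F : \mathcal{C} \to \mathcal{C}_\oc$ is the identity on objects with $F(h : A \to B) \stackrel{\mathrm{df. }}{=} h \circ \epsilon_A$, and $G : \mathcal{C}_\oc \to \mathcal{C}$ sends $A \mapsto \oc A$ and $f \mapsto \oc f \circ \delta_A$; one checks $\oc = G F$. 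I would claim that the terminal object of $\mathcal{C}_\oc$ is $1$, that the product of $A$ and $B$ is $A \& B$, and that the exponential is $A \Rightarrow B \stackrel{\mathrm{df. }}{=} \oc A \multimap B$.

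For the finite products the key observation is that $F$ is a \emph{right} adjoint and hence preserves limits. Thus $F$ carries the product cone $(A \& B, \pi_1, \pi_2)$ of $\mathcal{C}$ to a product cone $(A \& B, F\pi_1, F\pi_2)$ in $\mathcal{C}_\oc$, and carries the terminal $1$ to a terminal object; since $F$ is the identity on objects, so that every object of $\mathcal{C}_\oc$ lies in its image, this shows that $(1, \&)$ is literally the finite-product structure of $\mathcal{C}_\oc$. No recourse to the Seely isomorphisms is needed at this stage. For the exponentials I would establish the required adjunction $(\_ \& A) \dashv (A \Rightarrow \_)$ through the chain of bijections
\[ \mathcal{C}_\oc(C \& A, B) = \mathcal{C}(\oc(C \& A), B) \cong \mathcal{C}(\oc C \otimes \oc A, B) \cong \mathcal{C}(\oc C, \oc A \multimap B) = \mathcal{C}_\oc(C, \oc A \multimap B), \]
where the first isomorphism is the Seely iso $\oc(C \& A) \stackrel{\sim}{\rightarrow} \oc C \otimes \oc A$ and the second is the closed structure $\multimap$ of $\mathcal{C}$. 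Naturality in $B$ is immediate from naturality of $\multimap$.

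The hard part will be naturality in $C$ \emph{with respect to co-Kleisli composition}. Since reindexing along a co-Kleisli map $h \in \mathcal{C}_\oc(C', C) = \mathcal{C}(\oc C', C)$ precomposes with $\oc h \circ \delta_{C'}$ rather than with an ordinary morphism, and since the product functoriality $h \& \mathrm{id}_A$ in $\mathcal{C}_\oc$ is computed through the pairing of co-Kleisli projections, I must check that the Seely iso is compatible with this reindexing. Concretely this reduces to a single coherence square relating $\delta$, the action of $\oc$ on pairings, and the isomorphism $\oc(C \& A) \cong \oc C \otimes \oc A$, which is exactly what the hypothesis that the canonical adjunction $G \dashv F$ is monoidal guarantees (equivalently, that the Seely isomorphisms make $\oc$ a symmetric monoidal comonad coherently with $\epsilon$ and $\delta$). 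I would verify this one coherence diagram; everything else is routine bookkeeping, and the terminal-object and projection laws then follow from the product part above.
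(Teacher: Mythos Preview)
Your proposal is correct and aligns with the paper's own proof sketch: the paper gives the product cone in $\mathcal{C}_\oc$ as $A \stackrel{\epsilon_{A\&B};\pi_1}{\leftarrow} \oc(A\&B) \stackrel{\epsilon_{A\&B};\pi_2}{\rightarrow} B$ directly, and establishes the exponential via exactly the same chain of bijections $\mathcal{C}_\oc(A\&B,C)\cong\mathcal{C}(\oc A\otimes\oc B,C)\cong\mathcal{C}_\oc(A,B\Rightarrow C)$ that you write down.

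Two minor differences are worth flagging. First, for products you invoke the abstract fact that $F$ is a right adjoint and hence preserves limits, whereas the paper simply exhibits the cone; your argument is the clean conceptual reason why the paper's cone works, and it avoids any verification by hand. Second, you go further than the paper's sketch by isolating the non-trivial point---naturality of the exponential bijection in the co-Kleisli variable---and correctly attributing it to the assumption that the canonical adjunction is monoidal (equivalently, that the Seely isomorphisms are compatible with $\delta$). The paper merely asserts ``natural in $A,C\in\mathcal{C}$'' without comment, so your treatment is in fact more complete on this point.
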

\begin{proof}[Proof (sketch)]
Let $\mathcal{C} = (\mathcal{C}, \otimes, \top, \multimap, \oc)$ be a NSC, and $A, B \in \mathcal{C}_\oc$.
First, we may give, as a terminal object and a binary product of $A$ and $B$ in $\mathcal{C}_\oc$, a terminal object $1$ and a diagram $A \stackrel{\epsilon_{A \& B} ; \pi_1}{\leftarrow} \oc (A \& B) \stackrel{\epsilon_{A \& B} ; \pi_2}{\rightarrow} B$ in $\mathcal{C}$, respectively.

Next, we may give $\oc A \multimap B \in \mathcal{C}$ as an exponential object $A \Rightarrow B$ from $A$ to $B$ in $\mathcal{C}_\oc$.
In fact, we have an isomorphism:
\begin{align*}
\mathcal{C}_\oc(A \& B, C) &= \mathcal{C}(\oc (A \& B), C) \\
&\cong \mathcal{C}(\oc A \otimes \oc B, C) \\
&\cong \mathcal{C}(\oc A, \oc B \multimap C) \\
&= \mathcal{C}_\oc (A, B \Rightarrow C)
\end{align*}
natural in $A, C \in \mathcal{C}$.
\end{proof}

The \emph{linear decomposition} $A \Rightarrow B \stackrel{\mathrm{df. }}{=} \oc A \multimap B$ of exponential objects in $\mathcal{C}_\oc$ into the comonad $\oc$ and the closed structure $\multimap$ in $\mathcal{C}$ is the categorical counterpart of \emph{Girard's translation} \cite{girard1987linear}, and it gives a \emph{unified} semantics of ILL and IL, where note that CCCs give the standard categorical semantics of IL (without $\bot$ or $\vee$) \cite{lambek1988introduction,jacobs1999categorical}.
What about $\bot$ and $\vee$?

It then seems a natural idea to add finite coproducts $(0, \oplus)$ to the NSC $\mathcal{C}$.
As pointed out in \cite{seely1987linear}, however, finite coproducts in $\mathcal{C}$ become \emph{weak} in $\mathcal{C}_\oc$: The morphism $\oc 0 \stackrel{\epsilon_0}{\rightarrow} 0 \dashrightarrow A$ in $\mathcal{C}$ is a morphism $0 \rightarrow A$ in $\mathcal{C}_\oc$ for each $A \in \mathcal{C}$, but it may not be unique for $\oc 0$ is not necessarily initial in $\mathcal{C}$; also, it seems reasonable to take, as a coproduct of $A, B \in \mathcal{C}_\oc$ in $\mathcal{C}_\oc$, a coproduct $\oc A \stackrel{\iota_1}{\rightarrow} \oc A \oplus \oc B \stackrel{\iota_2}{\leftarrow} \oc B$ in $\mathcal{C}$, but the induced copairings in $\mathcal{C}_\oc$ do not necessarily satisfy uniqueness as they may not be copairings in $\mathcal{C}$. 
Meanwhile, this construction clearly works for \emph{weak} coproducts in $\mathcal{C}$ as well, which is important as the game-semantic NSC in Sect.~\ref{GameSemantics} has only weak ones.
To summarize:
\begin{corollary}[Semantics of ILL and IL \cite{seely1987linear,bierman1995categorical}]
\label{CoroCategoricalSemanticsOfIL}
A NSC $\mathcal{C} = (\mathcal{C}, \otimes, \top, \multimap, \oc)$ with weak finite coproducts $(0, \oplus)$ gives a semantics of LLJ.
Moreover, the co-Kleisli category $\mathcal{C}_\oc$ has:
\begin{itemize}

\item Finite products just given by finite products $(1, \&)$ in $\mathcal{C}$;

\item Exponential objects given by $A \Rightarrow B \stackrel{\mathrm{df. }}{=} \oc A \multimap B$ in $\mathcal{C}$ for all $A, B \in \mathcal{C}_\oc$;

\item Weak finite coproducts given by $(0, \oc (\_) \oplus \oc (\_))$ in $\mathcal{C}$

\end{itemize}
and thus, $\mathcal{C}_\oc$ gives a semantics of LJ \cite{lambek1988introduction,jacobs1999categorical}.
\end{corollary}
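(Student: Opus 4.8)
The plan is to read this corollary as the assembly of three ingredients already in place: the semantics of the $\bot$-and-$\oplus$-free fragment of LLJ carried by any NSC, the cartesian closed structure of $\mathcal{C}_\oc$ established in Thm.~\ref{CoroCoprodWeakProdInFwLSMCs}, and the observation (made in the discussion preceding the statement) that finite coproducts of $\mathcal{C}$ survive in $\mathcal{C}_\oc$ only in a weak form. Accordingly I would split the argument into (i) extending the object/morphism assignment of the NSC semantics to the connective $\oplus$ and the constants $0$, $\bot$ so as to interpret all of LLJ, and (ii) transporting the product, exponential and weak coproduct structure to $\mathcal{C}_\oc$ and invoking the standard categorical semantics of IL.

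For (i), I would start from the given interpretation $\llbracket - \rrbracket$ of the fragment without $\bot$ or $\oplus$, interpret the additional formula $A \oplus B$ by the chosen weak binary coproduct $\llbracket A \rrbracket \oplus \llbracket B \rrbracket$, and interpret both $0$ and the constant $\bot$ by the weak initial object (this collapse is exactly what the absence of $\invamp$ in LLJ dictates, since the only LLJ-rules on $\bot$ are $\bot$L and the degenerate $\bot$R). The new inference rules are then read off from the universal property: $\oplus$R by the injections $\iota_1, \iota_2$, $\oplus$L by the copairing $[-,-]$, and $0$L, $\bot$L, $\bot$R by the mediating maps out of the initial object. Since every remaining rule is one of the already-interpreted fragment, this step is routine; the only care needed is to fix once and for all a choice of injections and copairings, so that the assignment is well defined even when the coproducts are merely weak.

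For (ii), the cartesian closed part is immediate from Thm.~\ref{CoroCoprodWeakProdInFwLSMCs}: $\mathcal{C}_\oc$ inherits finite products from $(1, \&)$ and exponentials $A \Rightarrow B \stackrel{\mathrm{df. }}{=} \oc A \multimap B$ in $\mathcal{C}$. The content-bearing step is the construction of weak coproducts in $\mathcal{C}_\oc$. I would take the initial object $0$ of $\mathcal{C}$ as a weak initial object of $\mathcal{C}_\oc$, and $\oc A \oplus \oc B$ as a weak coproduct of $A, B$, with injections given by the coproduct injections $\oc A \xrightarrow{\iota_1} \oc A \oplus \oc B$ (read as co-Kleisli maps) and with the copairing of $f \colon \oc A \to C$, $g \colon \oc B \to C$ given by $\oc(\oc A \oplus \oc B) \xrightarrow{\epsilon} \oc A \oplus \oc B \xrightarrow{[f,g]} C$. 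The defining $\beta$-equation is then a short calculation in $\mathcal{C}$: unfolding co-Kleisli composition, the injection followed by this copairing equals $\delta_A \, ; \oc \iota_1 \, ; \epsilon \, ; [f,g]$, which by naturality of $\epsilon$ and the comonad identity $\delta_A \, ; \epsilon_{\oc A} = \mathrm{id}$ collapses to $\iota_1 \, ; [f,g] = f$, as required.

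The part I expect to be the only genuinely delicate point is to verify that these coproducts are merely \emph{weak}, i.e. that the copairing above satisfies the commutation equations but need not be unique, and likewise that $0$ is only weakly initial. The reason, which I would make explicit, is that $\oc$ does not preserve coproducts: the images $\oc \oc A \xrightarrow{\oc \iota_1} \oc(\oc A \oplus \oc B)$ need not form a coproduct diagram, so a morphism $h \colon \oc(\oc A \oplus \oc B) \to C$ agreeing with $f, g$ on the injections need not factor through $\epsilon$ as $[f,g]$; dually $\oc 0$ need not be initial in $\mathcal{C}$, so the mediating map out of $0$ in $\mathcal{C}_\oc$ may fail to be unique. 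This is precisely the phenomenon flagged in the surrounding discussion, and it is exactly what forces the coproducts of the IL-model $\mathcal{C}_\oc$ to be weak; moreover the construction visibly goes through verbatim when the coproducts of $\mathcal{C}$ themselves are only weak, as is needed for the game-semantic NSC of Sect.~\ref{GameSemantics}. With finite products, exponentials and weak finite coproducts in hand, $\mathcal{C}_\oc$ is a cartesian closed category with weak finite coproducts, which is exactly the structure interpreting LJ, completing the proof.
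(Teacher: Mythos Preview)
Your proposal is correct and follows essentially the same approach as the paper: the corollary is stated as a summary of the preceding theorem (the cartesian closed structure of $\mathcal{C}_\oc$) together with the discussion immediately before it, which is exactly your decomposition into (i) and (ii). In fact you supply more detail than the paper does---in particular the explicit verification of the $\beta$-equation for the copairing in $\mathcal{C}_\oc$---whereas the paper leaves this implicit in the phrase ``the induced copairings in $\mathcal{C}_\oc$'' and gives no separate proof of the corollary.
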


Note that the derivation of the categorical semantics of IL in $\mathcal{C}_\oc$ from that of ILL in $\mathcal{C}$ coincides with the translation $\mathscr{T}_{\mathsf{i}}$.

\subsection{Categorical Semantics of LL$\mathbf{^-}$ and CL}
Our main idea on modeling LLK$^-$ is then to introduce the following symmetric structure to BwLSMCs: 
\begin{definition}[FwLSMCs]
A SMC $\mathcal{C} = (\mathcal{C}, \invamp, \bot)$ is \emph{\bfseries forward-liberalizable (FwL)} if it has finite coproducts $(0, \oplus)$ and is equipped with:
\begin{itemize}

\item A monad $\wn = (\wn, \eta, \mu)$ on $\mathcal{C}$ such that the canonical adjunction between $\mathcal{C}$ and the Kleisli category $\mathcal{C}^\wn$ of $\mathcal{C}$ over $\wn$ is monoidal;

\item Isomorphisms $\wn 0 \stackrel{\sim}{\rightarrow} \bot$ and $\wn (A \oplus B) \stackrel{\sim}{\rightarrow} \wn A \invamp \wn B$ natural in $A, B \in \mathcal{C}$.

\end{itemize}
\end{definition} 

\begin{corollary}[Coproducts and weak products in FwLSMCs]
\label{CoroCoprodWeakProdInFwLSMCs}
The Kleisli category $\mathcal{C}^\wn$ of a FwLSMC $\mathcal{C} = (\mathcal{C}, \invamp, \bot, \wn)$ with weak finite products $(1, \&)$ has:
\begin{itemize}

\item Finite coproducts given by finite coproducts $(0, \oplus)$ in $\mathcal{C}$;

\item Weak finite products given by $(1, \wn (\_) \& \wn(\_))$ in $\mathcal{C}$.

\end{itemize}
\end{corollary}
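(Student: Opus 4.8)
The plan is to obtain both clauses by dualizing the corresponding facts about the co-Kleisli category that underlie Theorem~``CCCs via NSCs'' and Corollary~\ref{CoroCategoricalSemanticsOfIL}. Passing to the opposite category turns the FwLSMC $\mathcal{C} = (\mathcal{C}, \invamp, \bot, \wn)$ with weak finite products $(1, \&)$ into a structure of exactly the BwL shape on $\mathcal{C}^{\mathsf{op}}$: the cotensor $\invamp$ becomes the tensor of $\mathcal{C}^{\mathsf{op}}$, the finite coproducts $(0, \oplus)$ become genuine finite products, the weak finite products $(1, \&)$ become weak finite coproducts, the monad $\wn$ becomes a comonad, and the isomorphisms dualize; moreover the Kleisli category satisfies $\mathcal{C}^\wn = ((\mathcal{C}^{\mathsf{op}})_\wn)^{\mathsf{op}}$. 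Crucially, the two clauses we must prove mention no closed structure, so only the product and weak-coproduct parts of the earlier arguments are needed, and those parts never invoke $\multimap$; hence they apply to $\mathcal{C}^{\mathsf{op}}$ even though $\mathcal{C}^{\mathsf{op}}$ need only be a BwLSMC rather than a full NSC, and they transport back to $\mathcal{C}^\wn$.

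For the first clause I would argue directly (this is cleaner than routing through the opposite category). The Kleisli inclusion $F : \mathcal{C} \to \mathcal{C}^\wn$ is the left adjoint of the Kleisli adjunction, hence preserves all colimits, in particular finite coproducts; since $F$ is the identity on objects, the coproduct $A \oplus B$ of $\mathcal{C}$, equipped with the coprojections $\iota_k ; \eta_{A \oplus B}$, together with the initial object $0$, is a finite coproduct in $\mathcal{C}^\wn$. This is precisely dual to the fact, used in the proof sketch of Theorem~``CCCs via NSCs'', that the co-Kleisli inclusion is a right adjoint and so lifts the finite products $(1, \&)$ of $\mathcal{C}$ to $\mathcal{C}_\oc$.

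For the second clause I would take $\wn A \mathbin{\&} \wn B$, carrying the $\mathcal{C}$-projections $\pi_k : \wn A \mathbin{\&} \wn B \to \wn A, \wn B$ now read as Kleisli morphisms $\wn A \mathbin{\&} \wn B \to A, B$, as the candidate weak product, and $1$ as the weak terminal object. Given Kleisli morphisms $f : X \to \wn A$ and $g : X \to \wn B$, the mediating map is $\langle f, g \rangle ; \eta_{\wn A \& \wn B} : X \to \wn(\wn A \mathbin{\&} \wn B)$. Unfolding Kleisli composition, $(\langle f, g \rangle ; \eta_{\wn A \& \wn B}) ; \pi_1$ equals, in $\mathcal{C}$, the composite $\langle f, g \rangle ; \eta_{\wn A \& \wn B} ; \wn \pi_1 ; \mu_A$; naturality of $\eta$ rewrites $\eta_{\wn A \& \wn B} ; \wn \pi_1$ as $\pi_1 ; \eta_{\wn A}$, and then the monad unit law $\eta_{\wn A} ; \mu_A = \mathrm{id}_{\wn A}$ collapses this to $\langle f, g \rangle ; \pi_1 = f$ by the weak-product property in $\mathcal{C}$ (and symmetrically for $g$); the weak-terminal property of $1$ is checked the same way via $\eta_1$.

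The only genuinely delicate point — and the reason the product is merely \emph{weak} — is the failure of uniqueness, which I would present rather than try to remove. A general Kleisli morphism $X \to \wn A \mathbin{\&} \wn B$ is an arbitrary $\mathcal{C}$-map $X \to \wn(\wn A \mathbin{\&} \wn B)$, and not every such map is of the form $\langle f, g \rangle ; \eta$; moreover the $\mathcal{C}$-pairing $\langle f, g \rangle$ is itself not unique because $(1, \&)$ is only a weak product. Hence the mediating morphism need not be unique. This is exactly dual to the observation recorded in the discussion preceding Corollary~\ref{CoroCategoricalSemanticsOfIL}, that the copairings induced in $\mathcal{C}_\oc$ need not be copairings in $\mathcal{C}$, so I would phrase the weakness in the same terms rather than attempting to strengthen it.
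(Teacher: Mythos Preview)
Your argument is correct and follows exactly the approach the paper takes: the paper's proof is the single line ``Symmetric to Cor.~\ref{CoroCategoricalSemanticsOfIL}'', and your proposal is precisely that dualization spelled out in full, including the direct verifications for coproducts and weak products that the one-line proof leaves implicit. Nothing needs to change.
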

\begin{proof}
Symmetric to Cor.~\ref{CoroCategoricalSemanticsOfIL}.
\end{proof}


Naturally, it seems a reasonable idea to require the FwL-structure on NSCs to model LLK$^-$, but it is impossible for the game-semantic NSC $\mathcal{LG}$ in Sect.~\ref{GameSemantics}: The game-semantic $\invamp$ and $\wn$ are not well-defined on \emph{non-strict} \cite{amadio1998domains,mccusker1998games} strategies; they may generate \emph{concurrent} (or \emph{nondeterministic}) strategies from non-strict, sequential (or deterministic) strategies.
As we shall see, the non-strictness is caused by \emph{currying} of strategies, i.e., the closed and the FwL-structures of $\mathcal{LG}$ are incompatible. 

This suggests employing the lluf subBwLSMC $\sharp \mathcal{LG}$ of $\mathcal{LG}$ whose strategies are all \emph{strict}.
Of course, $\sharp \mathcal{LG}$ is \emph{not closed}, but currying \emph{w.r.t. the entire domain} and uncurrying \emph{w.r.t. the entire codomain} are possible. 
This observation actually motivates the rules $\Rightarrow$R$^-$ and $\multimap$R$^-$ given in Sect.~\ref{SequentCalculi}.
It also leads to:
\begin{lemma}[Semantics of LL$^-$]
\label{LemLLNegativeWithoutDst}
A NSC $\mathcal{C} = (\mathcal{C}, \otimes, \top, \multimap, \oc)$ equipped with a FwLSMC $\sharp \mathcal{C} = (\sharp \mathcal{C}, \invamp, \bot, \wn)$ such that:
\begin{enumerate}

\item $\sharp \mathcal{C}$ is a lluf subBwLSMC of $\mathcal{C}$, in which $\top$ is terminal, $\bot$ is initial, $\sharp\mathcal{C}(A, \bot) = \mathcal{C}(A, \bot)$ and $\sharp\mathcal{C}(\top, B) = \mathcal{C}(\top, B)$ for all $A, B \in \mathcal{C}$;

\item It is equipped with the following morphisms in $\sharp\mathcal{C}$:
\begin{align*}
\Omega_{A, B, C} : A \otimes (B \invamp C) &\rightarrow (A \otimes B) \invamp C \\
\Upsilon_A : \oc \wn A &\rightarrow \wn \oc A \\
\Sigma_{A, B} : \oc (A \invamp \wn B) &\rightarrow \oc A \invamp \wn B \\
\Pi_{A, B} : \oc A \otimes \wn B &\rightarrow \wn (\oc A \invamp B) 
\end{align*} 
natural in $A, B, C \in \mathcal{C}$;

\item It is equipped with the following isomorphisms in $\sharp\mathcal{C}$:
\begin{align}
\label{NI1}
A \invamp \top &\cong \top \\
\label{NI2}
A \multimap B &\cong \neg A \invamp B
\end{align}
natural in $A, B \in \mathcal{C}$, where $\neg A \stackrel{\mathrm{df. }}{=} A \multimap \bot$;

\end{enumerate}
gives a semantics of LLK$^-$ in $\sharp \mathcal{C}$.
\end{lemma}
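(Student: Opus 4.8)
The plan is to interpret each formula of LL$^-$ as an object of $\mathcal{C}$ (equivalently $\sharp\mathcal{C}$, since they share objects), and each derivation of a sequent $\Delta \vdash \Gamma$ as a morphism $\llbracket \otimes \Delta \rrbracket \to \llbracket \invamp \Gamma \rrbracket$ in $\sharp\mathcal{C}$, by induction on the derivation tree. The object assignment is forced by the connectives: $\otimes, \top, \multimap, \oc$ come from the NSC-structure of $\mathcal{C}$, while $\invamp, \bot, \wn, 0, \oplus$ and the weak products $1, \&$ come from the FwLSMC-structure of $\sharp\mathcal{C}$. Contexts are folded using the $\otimes/\invamp$ conventions $\otimes\boldsymbol{\epsilon} = \top$, $\invamp\boldsymbol{\epsilon} = \bot$ already fixed in the excerpt. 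The core of the proof is then a rule-by-rule verification that each inference of Fig.~\ref{FigLLKNegative} is realized by an available morphism, and this is where the axioms (1)--(3) earn their keep.

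\textbf{First I would} dispatch the purely structural and multiplicative/additive rules. Id is the identity, Cut$^-$ is composition in $\sharp\mathcal{C}$ (note the restricted form $\Delta \vdash B$, $\Delta', B \vdash \Gamma'$ composes cleanly without needing a par on the cut formula), and XL, XR are instances of the symmetries of $\otimes$ and $\invamp$. The units $0$L, $1$R, $\top$R, $\bot$L are handled by initiality of $0$ and terminality of $1$ (as weak product/coproduct units, supplemented by the isomorphism \eqref{NI1} and the hom-set conditions $\sharp\mathcal{C}(A,\bot) = \mathcal{C}(A,\bot)$, $\sharp\mathcal{C}(\top,B) = \mathcal{C}(\top,B)$ of axiom (1), which guarantee $\top$ is genuinely terminal and $\bot$ genuinely initial in $\sharp\mathcal{C}$). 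The rules $\otimes$L, $\otimes$R$^-$, $\&$L, $\&$R$^-$ use the monoidal and weak-product structure; dually $\invamp$L$^-$, $\invamp$R, $\oplus$L$^-$, $\oplus$R use the symmetric monoidal and coproduct structure of $\sharp\mathcal{C}$. The exponential rules $\oc$W, $\oc$C, $\oc$D and $\oc$R use the comonad $\oc$ of the NSC (Seely isomorphisms for weakening/contraction via $\oc A \otimes \oc B \cong \oc(A \& B)$ and $\top \cong \oc 1$), and symmetrically $\wn$W, $\wn$C, $\wn$D, $\wn$L use the monad $\wn$ of $\sharp\mathcal{C}$; crucially these are all morphisms of $\sharp\mathcal{C}$ since $\invamp$ and $\wn$ are only well-behaved on strict morphisms, and $\sharp\mathcal{C}$ is a lluf subBwLSMC so the $\oc$-structure restricts to it as well.

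\textbf{The implication rule} $\multimap$R$^-$ is the point where non-closedness of $\sharp\mathcal{C}$ is circumvented: its restricted premise $A \vdash B, \Gamma$, i.e.\ a morphism $A \to B \invamp (\invamp\Gamma)$, is curried \emph{with respect to the entire domain} using the closed structure $\multimap$ of the ambient NSC $\mathcal{C}$ together with the isomorphism \eqref{NI2} rewriting $A \multimap (B \invamp (\invamp\Gamma))$ as $\neg A \invamp B \invamp (\invamp\Gamma)$, so that the conclusion $\vdash A \multimap B, \Gamma$ lands back in $\sharp\mathcal{C}$; $\multimap$L is then the evident pairing built from $\multimap$ and the symmetries. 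Finally, the \textbf{distribution rules} are interpreted directly by the supplied natural transformations of axiom (2): $\otimes\invamp$R uses the associator $\Omega_{A,B,C}\colon A \otimes (B \invamp C) \to (A \otimes B)\invamp C$, $\oc\wn$L and $\oc\wn$R use $\Upsilon_A\colon \oc\wn A \to \wn\oc A$, and the derived promotions appearing in the proof of Thm.~\ref{ThmTranslationOfCLIntoLL} rely on $\Sigma_{A,B}$ and $\Pi_{A,B}$ to push $\oc$ past $\invamp$ and $\otimes$ past $\wn$ respectively.

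\textbf{The main obstacle I expect} is not any single rule but coherence: establishing that the assignment is well-defined, i.e.\ that derivations identified by the cut-elimination of Sect.~\ref{CutElimSoundnessAndCompleteness} receive equal morphisms, requires the naturality of $\Omega, \Upsilon, \Sigma, \Pi$ to interlock correctly with the monad/comonad laws and the Seely isomorphisms. Since the Remark in the excerpt explicitly postpones equational soundness to Sect.~\ref{CutElimSoundnessAndCompleteness}, for this Lemma I would only need to exhibit a morphism for each derivation (not yet prove two derivations agree), so I would confine the argument to producing the interpreting morphism rule-by-rule and defer the equational coherence. The subtlest bookkeeping is keeping every auxiliary morphism inside $\sharp\mathcal{C}$ rather than merely in $\mathcal{C}$ — in particular verifying that the curryings used for $\multimap$R$^-$, though defined via the non-strict closed structure of $\mathcal{C}$, yield strict morphisms on the nose because their domain is a unit or their codomain sits under $\invamp$, which is exactly what axioms (1) and (3) are arranged to secure.
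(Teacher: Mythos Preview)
Your overall architecture matches the paper's: interpret sequents $\Delta\vdash\Gamma$ as $\sharp\mathcal{C}$-morphisms $\otimes\Delta\to\invamp\Gamma$ and proceed rule-by-rule, with currying for $\multimap$R$^-$ pushed through $\sharp\mathcal{C}(\top,-)=\mathcal{C}(\top,-)$ and the distribution rules read directly off $\Omega$ and $\Upsilon$. Two rules, however, are not handled by what you wrote.

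First, $\oc$R and $\wn$L in LLK$^-$ are \emph{not} the usual intuitionistic promotion/dereliction: the premise of $\oc$R is $\oc\Delta\vdash B,\wn\Gamma$, so the comonad alone only gives you $\oc\Delta\stackrel{\delta}{\to}\oc\oc\Delta\stackrel{\oc b}{\to}\oc(B\invamp\wn\Gamma)$, and you still need to reach $\oc B\invamp\wn\Gamma$. This is exactly what $\Sigma_{B,\Gamma}$ is for; dually $\Pi$ is what makes $\wn$L go through via $\oc\Delta\otimes\wn A\stackrel{\Pi}{\to}\wn(\oc\Delta\otimes A)\stackrel{\wn a}{\to}\wn\wn\Gamma\stackrel{\mu}{\to}\wn\Gamma$. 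You placed $\Sigma,\Pi$ only in the translation of Thm.~\ref{ThmTranslationOfCLIntoLL}, but they are already indispensable for these two basic rules of LLK$^-$.

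Second, ``the evident pairing built from $\multimap$ and the symmetries'' does not suffice for $\multimap$L. The paper's construction is substantive: rewrite $A\multimap B$ as $\neg A\invamp B$ via (\ref{NI2}), distribute with a map $\Phi$ built from $\Omega$ to obtain $(\Delta_1\otimes\neg A)\invamp(\Delta_2\otimes B)$, apply $(h_1\otimes\mathit{id}_{\neg A})\invamp h_2$, redistribute to isolate $A\otimes\neg A$, and then evaluate $\mathit{ev}_{A,\bot}:\neg A\otimes A\to\bot$. The last step lives in $\sharp\mathcal{C}$ precisely because its codomain is $\bot$ and $\sharp\mathcal{C}(-,\bot)=\mathcal{C}(-,\bot)$; without this observation you cannot keep the interpretation inside $\sharp\mathcal{C}$. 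None of this is a ``pairing'', and the use of $\Omega$ here is essential, not just for the distribution rules.
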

\begin{proof}
We interpret proofs of each sequent $A_1, A_2, \dots, A_m \vdash B_1, B_2, \dots, B_n$ in LLK$^-$ by morphisms $A_1 \otimes A_2 \dots \otimes A_m \rightarrow B_1 \invamp B_2 \dots \invamp B_n$ in $\sharp \mathcal{C}$ by induction on the proofs, where we indicate the interpretation of logical constants and connectives of LL$^-$ by the notation for $\mathcal{C}$ (n.b., the domain of the morphisms is $\top$ if $m = 0$, and the codomain is $\bot$ if $n = 0$).

First, Id, $\top$R and $\bot$L are interpreted by identities in $\sharp \mathcal{C}$. 
We may handle $\top$L and $\bot$R by the unit laws of $\top$ and $\bot$, and $1$R and $0$L by (\ref{NI1}) and initiality of $0$, respectively. 
Also, the distribution rules are modeled by $\Omega$ and $\Upsilon$ in the obvious way.

We interpret Cut$^-$ by $\frac{ \ f : \Delta \rightarrow B \ \ \ \ f' : \Delta' \otimes B \rightarrow \Gamma' \ }{ \ \Delta \otimes \Delta' \stackrel{\varpi_{\Delta, \Delta'}}{\rightarrow} \Delta' \otimes \Delta \stackrel{\mathit{id}_{\Delta'} \otimes f}{\rightarrow} \Delta' \otimes B \stackrel{f'}{\rightarrow} \Gamma' \ }$, where $\varpi_{\Delta, \Delta'}$ is the symmetry w.r.t. $\otimes$.

The interpretations of $\otimes$L and $\invamp$R may be reduced to the induction hypotheses; $\otimes$R$^-$ and $\invamp$L$^-$ are interpreted by $\frac{ \ f_1 : \Delta_1 \rightarrow B_1 \ \ \ \ f_2 : \Delta_2 \rightarrow B_2 \ }{ \ f_1 \otimes f_2 : \Delta_1 \otimes \Delta_2 \rightarrow B_1 \otimes B_2 \ }$ and $\frac{ \ g_1 : A_1 \rightarrow \Gamma_1 \ \ \ \ g_2 : A_2 \rightarrow \Gamma_2 \ }{ \ g_1 \invamp g_2 : A_1 \invamp A_2 \rightarrow \Gamma_1 \invamp \Gamma_2 \ }$.
 
The interpretations of $\&$L and $\oplus$R are given by $\frac{ \ l_i : \Delta \otimes A_i \rightarrow \Gamma \ \ \ \ i \in \overline{2} \ }{ \ \Delta \otimes (A_1 \& A_2) \stackrel{\mathit{id}_\Delta \otimes \pi_i}{\rightarrow} \Delta \otimes A_i \stackrel{l_i}{\rightarrow} \Gamma \ }$ and $\frac{ \ r_i : \Delta \rightarrow B_i \invamp \Gamma \ \ \ \ i \in \overline{2} \ }{ \ \Delta \stackrel{r_i}{\rightarrow} B_i \invamp \Gamma \stackrel{\iota_i \invamp \mathit{id}_\Gamma}{\rightarrow} (B_1 \oplus B_2) \invamp \Gamma \ }$, and those of $\&$R$^-$ and $\oplus$L$^-$ by $\frac{ \ r_1 : \Delta \rightarrow B_1 \ \ \ \ r_2 : \Delta \rightarrow B_2 \ }{ \ \langle r_1, r_2 \rangle : \Delta \rightarrow B_1 \& B_2  \ }$ and by $\frac{ \ l_1 : A_1 \rightarrow \Gamma \ \ \ \ l_2 : A_2 \rightarrow \Gamma \ }{ \ [l_1, l_2] : A_1 \oplus A_2 \rightarrow \Gamma \ }$, respectively. 

The interpretation of $\multimap$L is given as follows. 
Given $A, B, C, D \in \mathcal{C}$, let $\Phi_{A, B, C, D} : A \otimes B \otimes (C \invamp D) \rightarrow (A \otimes C) \invamp (B \otimes D)$ be the natural transformation obtained by composing $\Omega$ and symmetries w.r.t. $\otimes$ in the obvious manner. 
Then, given $h_1 : \Delta_1 \rightarrow A \invamp \Gamma_1$ and $h_2 : \Delta_2 \otimes B \rightarrow \Gamma_2$ in $\sharp\mathcal{C}$, we compose $(A \invamp \Gamma_1) \otimes \neg A \cong (\neg A \otimes \top) \otimes (A \invamp \Gamma_1) \stackrel{\Phi_{A, \neg A, \top, \Gamma_1}}{\rightarrow} (\neg A \otimes A) \invamp (\top \otimes \Gamma_1) \cong (A \otimes \neg A) \invamp \Gamma_1$, for which we write $\Phi'_{A, \Gamma_1, \neg A}$.
Then, we obtain $\Delta_1 \otimes \Delta_2 \otimes (A \multimap B) \stackrel{\mathit{id}_{\Delta_1 \otimes \Delta_2} \otimes (\ref{NI2})}{\rightarrow} \Delta_1 \otimes \Delta_2 \otimes (\neg A \invamp B) \stackrel{\Phi_{\Delta_1, \Delta_2, \neg A, B}}{\rightarrow} (\Delta_1 \otimes \neg A) \invamp (\Delta_2 \otimes B) \stackrel{(h_1 \otimes \mathit{id}_{\neg A}) \invamp h_2}{\rightarrow} ((A \invamp \Gamma_1) \otimes \neg A) \invamp \Gamma_2 \stackrel{\Phi'_{A, \Gamma_1, \neg A} \invamp \mathit{id}_{\Gamma_2}}{\rightarrow} ((A \otimes \neg A) \invamp \Gamma_1) \invamp \Gamma_2 \stackrel{((\mathit{ev}_{A, \bot} \circ \varpi_{A, \neg A}) \invamp \mathit{id}_{\Gamma_1}) \invamp \mathit{id}_{\Gamma_2}}{\rightarrow} (\bot \invamp \Gamma_1) \invamp \Gamma_2 \cong \Gamma_1 \invamp \Gamma_2$, where $\mathit{ev}_{A, \bot} : \neg A \otimes A \rightarrow \bot$ is obtained from $\mathit{id}_{\neg A} : \neg A \rightarrow \neg A$ by uncurrying in $\mathcal{C}$ (n.b., $\mathit{ev}_{A, \bot}$ must be in $\sharp\mathcal{C}$ because its codomain is $\bot$).
The interpretation of $\multimap$R$^-$ is by currying in $\mathcal{C}$, for which $\sharp\mathcal{C}(\top, B) = \mathcal{C}(\top, B)$ for all $B \in \mathcal{C}$ is employed.

Note that $\oc$D, $\oc$W and $\oc$C may be handled just as in the interpretation of ILL in NSCs \cite{benton1992term,benton1993term}; $\wn$D, $\wn$W and $\wn$C are just symmetric. 
Also, XL and XR are interpreted by symmetries w.r.t. $\otimes$ and $\invamp$, respectively.

Finally, $\wn$L is interpreted by $\frac{ \ a : \oc \Delta \otimes A \rightarrow \wn \Gamma \ }{ \ \oc \Delta \otimes \wn A \stackrel{\Pi_{\Delta, A}}{\rightarrow} \wn (\oc \Delta \otimes A) \stackrel{\wn a}{\rightarrow} \wn \wn \Gamma \stackrel{\mu_\Gamma}{\rightarrow} \wn \Gamma \ }$, and $\oc$R by $\frac{ \ b : \oc \Delta \rightarrow B \invamp \wn \Gamma \ }{ \ \oc \Delta \stackrel{\delta_\Delta}{\rightarrow} \oc \oc \Delta \stackrel{\oc b}{\rightarrow} \oc (B \invamp \wn \Gamma) \stackrel{\Sigma_{B, \Gamma}}{\rightarrow} \oc B \invamp \wn \Gamma \ }$. 
\end{proof}

In particular, for each $A \in \mathcal{C}$, currying in $\mathcal{C}$ gives:
\begin{align*}
\sharp\mathcal{C}(A, A) &\cong \sharp\mathcal{C}(\top \otimes A, A) \\
&\cong \sharp\mathcal{C}(\top, A \multimap A) \\
&\cong \sharp\mathcal{C}(\top, \neg A \invamp A) \ \text{(by (\ref{NI2}))} 
\end{align*}
where $\neg A \stackrel{\mathrm{df. }}{=} A \multimap \bot$ is the \emph{\bfseries negation} of $A$.
This natural bijection allows $\sharp \mathcal{C}$ to model \emph{(linear) classical laws}:
\begin{itemize}

\item We may get:
\begin{equation*}
\mathit{lem}_A \in \sharp\mathcal{C}(\top, \neg A \invamp A)
\end{equation*}
from $\mathit{id}_A \in \mathcal{C}(A, A)$, which models the classical \emph{law of excluded middle (LEM)} \cite{troelstra2000basic};

\item We may further compose: 
\begin{equation*}
\mathit{dne}_A \in \sharp\mathcal{C}(\neg \neg A, A)
\end{equation*}
by $\neg \neg A \cong \neg \neg A \otimes \top \stackrel{\mathit{id}_{\neg \neg A} \otimes \mathit{lem}_A}{\rightarrow} \neg \neg A \otimes (\neg A \invamp A) \cong \neg \neg A \otimes \top \otimes (\neg A \invamp A) \stackrel{\Phi_{\neg \neg A, \top, \neg A, A}}{\rightarrow} (\neg \neg A \otimes \neg A) \invamp (\top \otimes A) \cong (\neg \neg A \otimes \neg A) \invamp A \stackrel{\mathit{ev}_{\neg A, \bot} \invamp \mathit{id}_A}{\rightarrow} \bot \invamp A \cong A$, which models \emph{double negation elimination (DNE)} \cite{troelstra2000basic}.

\end{itemize}

Recall that our aim is to give a \emph{unity} of logic; thus, we shall obtain semantics of CL from that of LL$^-$, i.e., NSCs satisfying the assumption of Lem.~\ref{LemLLNegativeWithoutDst}.
For this point, we employ:
\begin{definition}[Distributive laws \cite{power2002combining}]
Let $\mathcal{C}$ be a category, and $\wn = (\wn, \eta, \mu)$ and $\oc = (\oc, \epsilon, \delta)$ a monad and a comonad on $\mathcal{C}$. 
A \emph{\bfseries distributive law} of $\oc$ over $\wn$ is a natural transformation $d : \oc \wn \Rightarrow \wn \oc$ such that $\wn \epsilon \circ d = \epsilon \wn : \oc \wn \Rightarrow \wn$, $\wn \delta \circ d = d \oc \circ \oc d \circ \delta \wn : \oc \wn \Rightarrow \wn \oc \oc$, $d \circ \oc \eta = \eta \oc : \oc \Rightarrow \wn \oc$ and $d \circ \oc \mu = \mu \oc \circ \wn d \circ d \wn : \oc \wn \wn \Rightarrow \wn \oc$.
\end{definition}

\begin{theorem}[Bi-Kleisli extension \cite{power2002combining}]
Let $\wn$ and $\oc$ be a monad and a comonad on a category $\mathcal{C}$, and $d : \oc \wn \Rightarrow \wn \oc$ a distributive law of $\oc$ over $\wn$.
The Kleisli construction on $\mathcal{C}$ over $\wn$ is extended to the co-Kleisli category $\mathcal{C}_\oc$, and the co-Kleisli construction on $\mathcal{C}$ over $\oc$ to the Kleisli category $\mathcal{C}^\wn$.
Moreover, the extended Kleisli and co-Kleisli categories are equivalent, i.e., $(\mathcal{C}_\oc)^\wn  \simeq (\mathcal{C}^\wn)_\oc$.
\end{theorem}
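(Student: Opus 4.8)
The plan is to read the distributive law $d : \oc\wn \Rightarrow \wn\oc$ as a lifting datum: it turns $\wn$ into a monad on the co-Kleisli category $\mathcal{C}_\oc$ and, dually, turns $\oc$ into a comonad on the Kleisli category $\mathcal{C}^\wn$; the two resulting iterated (co-)Kleisli categories will then be compared directly. I would in fact aim for the sharper statement that they are \emph{equal} (identity-on-objects, with the same hom-sets and composition), from which the asserted equivalence is immediate.

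First I would define the lifted endofunctor $\wn_\oc$ on $\mathcal{C}_\oc$ by $\wn_\oc A \stackrel{\mathrm{df.}}{=} \wn A$ on objects and, on a co-Kleisli morphism $f : \oc A \rightarrow B$, by $\wn_\oc f \stackrel{\mathrm{df.}}{=} \wn f \circ d_A : \oc\wn A \rightarrow \wn B$. Preservation of the co-Kleisli identity $\epsilon_A$ is precisely the axiom $\wn\epsilon \circ d = \epsilon\wn$, and preservation of co-Kleisli composition reduces, after pushing $d$ past $\oc\wn f$ by naturality, to the axiom $\wn\delta \circ d = d\oc \circ \oc d \circ \delta\wn$. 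I would lift the unit and multiplication through the counit $\epsilon$, setting $\eta^\oc_A \stackrel{\mathrm{df.}}{=} \eta_A \circ \epsilon_A$ and $\mu^\oc_A \stackrel{\mathrm{df.}}{=} \mu_A \circ \epsilon_{\wn\wn A}$; their naturality and the monad laws in $\mathcal{C}_\oc$ follow from the remaining axioms $d \circ \oc\eta = \eta\oc$ and $d \circ \oc\mu = \mu\oc \circ \wn d \circ d\wn$ together with the (co)monad laws of $\oc$ and $\wn$ in $\mathcal{C}$. The lift of $\oc$ to a comonad $\oc^\wn$ on $\mathcal{C}^\wn$ is exactly dual: on a Kleisli morphism $g : A \rightarrow \wn B$ put $\oc^\wn g \stackrel{\mathrm{df.}}{=} d_B \circ \oc g$, with $\epsilon^\wn_A \stackrel{\mathrm{df.}}{=} \eta_A \circ \epsilon_A$ and $\delta^\wn_A \stackrel{\mathrm{df.}}{=} \eta_{\oc\oc A} \circ \delta_A$, and the verification mirrors the monad case.

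With both liftings in hand, the comparison is an identity-on-objects functor. A morphism $A \rightarrow B$ in $(\mathcal{C}_\oc)^{\wn_\oc}$ is a $\mathcal{C}_\oc$-morphism $A \rightarrow \wn_\oc B$, i.e., since $\wn_\oc B = \wn B$ on objects, a $\mathcal{C}$-morphism $\oc A \rightarrow \wn B$; a morphism $A \rightarrow B$ in $(\mathcal{C}^\wn)_{\oc^\wn}$ is a $\mathcal{C}^\wn$-morphism $\oc^\wn A \rightarrow B$, i.e., since $\oc^\wn A = \oc A$, again a $\mathcal{C}$-morphism $\oc A \rightarrow \wn B$, so both hom-sets are literally $\mathcal{C}(\oc A, \wn B)$ and both identities compute to $\eta_A \circ \epsilon_A$. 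It then remains to see that composition agrees. Unwinding the Kleisli-of-co-Kleisli composite of $f : \oc A \rightarrow \wn B$ and $g : \oc B \rightarrow \wn C$ and using naturality of $\epsilon$ together with $\epsilon_{\oc A} \circ \delta_A = \mathit{id}$ to cancel the doubled $\oc$-layer gives $\mu_C \circ \wn g \circ d_B \circ \oc f \circ \delta_A$; unwinding the co-Kleisli-of-Kleisli composite and using naturality of $\eta$ with $\mu_{\oc B} \circ \eta_{\wn\oc B} = \mathit{id}$ to cancel the doubled $\wn$-layer gives the \emph{same} morphism $\mu_C \circ \wn g \circ d_B \circ \oc f \circ \delta_A$. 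Hence the two categories coincide strictly, and a fortiori $(\mathcal{C}_\oc)^\wn \simeq (\mathcal{C}^\wn)_\oc$.

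I expect the main obstacle to be the functoriality and (co)monad-law bookkeeping for the two liftings rather than the final comparison: each of the four distributive-law axioms is needed in exactly one place (the $\delta$-axiom for functoriality of $\wn_\oc$, the $\mu$-axiom for its multiplication law, and the two remaining axioms for the $\eta$- and $\epsilon$-compatibilities), so the work is to lay out these four diagrams cleanly; dualizing then handles $\oc^\wn$ for free. Once the liftings are known to be well-defined monad/comonad structures, the coincidence of composites is a short chase whose only delicate point is the cancellation of the extra $\oc\oc$- (resp. $\wn\wn$-) layer by a triangle identity, and I would display the two composite computations side by side to make the strict equality transparent.
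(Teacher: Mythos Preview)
Your proposal is correct and is the standard argument for this result. Note, however, that the paper does not give its own proof of this theorem: it is stated as a cited result from \cite{power2002combining} with no proof block, so there is nothing to compare your argument against. Your approach---lifting $\wn$ to a monad on $\mathcal{C}_\oc$ (and dually $\oc$ to a comonad on $\mathcal{C}^\wn$) via the distributive law, then checking that both iterated constructions have hom-sets $\mathcal{C}(\oc A, \wn B)$ with the common composition $\mu_C \circ \wn g \circ d_B \circ \oc f \circ \delta_A$---is exactly the one given in the cited reference, and your identification of which distributive-law axiom is used where is accurate.
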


Given a distributive law of $\oc$ over $\wn$, we define $\mathcal{C}_\oc^\wn \stackrel{\mathrm{df. }}{=} (\mathcal{C}_\oc)^\wn  \simeq (\mathcal{C}^\wn)_\oc$ and call it the \emph{\bfseries bi-Kleisli category} of $\mathcal{C}$ over $\oc$ and $\wn$.

As one may have already expected, we propose $\mathcal{C}_\oc^\wn$ as our categorical structure to interpret CL.
Hence, we define:
\begin{definition}[BiLSMCCs]
\label{DefBiLSMCCs}
A \emph{\bfseries bi-liberalizable (BiL) SMCC} is a NSC $\mathcal{C} = (\mathcal{C}, \otimes, \top, \multimap, \oc)$ such that it has weak finite coproducts $(0, \oplus)$, and it is equipped with:
\begin{itemize}

\item A lluf subBwLSMC $\sharp\mathcal{C}$ and a triple $(\invamp, \bot, \wn)$ such that $\top$ (resp. $\bot$) is terminal (resp. initial) in $\sharp\mathcal{C}$, $\sharp\mathcal{C}(A, \bot) = \mathcal{C}(A, \bot)$ and $\sharp\mathcal{C}(\top, B) = \mathcal{C}(\top, B)$ for all $A, B \in \mathcal{C}$, and $\sharp\mathcal{C} = (\sharp \mathcal{C}, \invamp, \bot, \wn)$ is FwL with finite coproducts $(0, \oplus)$;

\item Natural transformations $\Omega$, $\Sigma$ and $\Pi$ in $\sharp\mathcal{C}$ (Lem.~\ref{LemLLNegativeWithoutDst});

\item Natural isomorphisms (\ref{NI1}) and (\ref{NI2}) as well as 
\begin{align}
\label{NI3}
\wn (A \multimap B) &\cong \oc A \multimap \wn B \\
\label{NI4}
\neg A \oplus \neg B &\cong \neg (A \& B)
\end{align}
(natural in $A, B \in \mathcal{C}$) in $\sharp \mathcal{C}$;

\item A distributive law $\Upsilon$ of $\oc$ over $\wn$. 
\if0
such that the map
\begin{align}
\sharp \mathcal{C}(\oc A, \wn B) &\rightarrow \sharp \mathcal{C}(\oc \wn A, \wn \oc B) \nonumber \\
f &\mapsto d_A ; \wn \delta_A ; \wn \oc f ; \wn d_B ; \mu_{\oc B}  
\end{align}
has the inverse $g \mapsto \oc \eta_A ; g ; \wn \eta_B$ for all $A, B \in \mathcal{C}$.
\fi
\end{itemize}
\end{definition}

\begin{theorem}[Semantics of LL$^-$]
\label{ThmSemanticsOfLLNegative}
Each BiLSMCC $\mathcal{C}$ gives a semantics of LLK$^-$ in $\sharp\mathcal{C}$.
\end{theorem}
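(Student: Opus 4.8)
The plan is to recognize that Definition~\ref{DefBiLSMCCs} was tailored precisely so that every BiLSMCC furnishes all the data demanded by Lemma~\ref{LemLLNegativeWithoutDst}; the interpretation built in the proof of that lemma then applies verbatim. Concretely, I would check the three itemized hypotheses of the lemma against the BiLSMCC axioms in turn. Hypothesis~(1) --- that $\sharp\mathcal{C}$ be a lluf subBwLSMC of the NSC $\mathcal{C}$ in which $\top$ is terminal, $\bot$ is initial, $\sharp\mathcal{C}(A,\bot)=\mathcal{C}(A,\bot)$ and $\sharp\mathcal{C}(\top,B)=\mathcal{C}(\top,B)$, with $(\sharp\mathcal{C},\invamp,\bot,\wn)$ FwL --- is exactly the first bullet of Definition~\ref{DefBiLSMCCs}. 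Hypothesis~(3), the isomorphisms (\ref{NI1}) and (\ref{NI2}), is contained in the third bullet, which in fact provides (\ref{NI3}) and (\ref{NI4}) in addition.

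The only point needing a word is hypothesis~(2), which asks for natural transformations $\Omega,\Upsilon,\Sigma,\Pi$ in $\sharp\mathcal{C}$. Here $\Omega,\Sigma,\Pi$ are supplied outright by the second bullet of Definition~\ref{DefBiLSMCCs}; for $\Upsilon$ I would note that a BiLSMCC carries a \emph{distributive law} $\Upsilon$ of $\oc$ over $\wn$, which is by definition a natural transformation $\oc\wn \Rightarrow \wn\oc$ subject to four coherence equations. In particular it is a natural transformation of precisely the type $\oc\wn A \to \wn\oc A$ required, and the lemma uses it only as such. Thus requiring $\Upsilon$ to be a distributive law is strictly stronger than the bare naturality the lemma needs, and the surplus coherences do no harm.

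Having matched all three hypotheses, I would simply invoke Lemma~\ref{LemLLNegativeWithoutDst}: a proof of a sequent $A_1,\dots,A_m \vdash B_1,\dots,B_n$ in LLK$^-$ is interpreted, by induction on its derivation, as a morphism $A_1\otimes\cdots\otimes A_m \to B_1\invamp\cdots\invamp B_n$ in $\sharp\mathcal{C}$ exactly as there, with $\top$ (resp.\ $\bot$) read as the empty domain (resp.\ codomain). No new clause of the induction arises, since LLK$^-$ has no rule beyond those already treated. The surplus data of a BiLSMCC --- the weak finite coproducts $(0,\oplus)$, the isomorphisms (\ref{NI3}) and (\ref{NI4}), and the distributive-law coherences --- are not consumed here; they are reserved for interpreting LK$^-$ in the bi-Kleisli category $\sharp\mathcal{C}_\oc^\wn$ and for equational soundness, and may be set aside.

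I do not expect a genuine obstacle: the theorem merely repackages Lemma~\ref{LemLLNegativeWithoutDst}, the one subtlety being the harmless strengthening of $\Upsilon$ to a distributive law. The real work is bookkeeping --- confirming that passing to Definition~\ref{DefBiLSMCCs} weakens none of the lemma's hypotheses and that the distributive law indeed restricts to the natural transformation $\oc\wn A \to \wn\oc A$ invoked in the interpretation.
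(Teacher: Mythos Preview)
Your proposal is correct and matches the paper's own proof, which simply reads ``Immediate from Lem.~\ref{LemLLNegativeWithoutDst}.'' Your extra care in checking that the distributive law $\Upsilon$ supplies the required natural transformation $\oc\wn A \to \wn\oc A$ in $\sharp\mathcal{C}$ is the only point worth spelling out, and you handle it correctly.
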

\begin{proof}
Immediate from Lem.~\ref{LemLLNegativeWithoutDst}.
\end{proof}

The natural isomorphisms (\ref{NI3}) and (\ref{NI4}) are not necessary for Thm.~\ref{ThmSemanticsOfLLNegative}, but they induce some of the De Morgan laws:
\begin{align}
\wn \neg A &= \wn (A \multimap \bot) \nonumber \\
&\cong \oc A \multimap \wn \bot \nonumber \ \text{(by (\ref{NI3}))} \\
&\cong \neg \oc A \invamp \wn \bot \nonumber \ \text{(by (\ref{NI2}))} \\
&\cong \neg \oc A \invamp \bot \nonumber \ \text{(by $\wn \bot \cong \wn 0 \cong \bot$)} \\
\label{DeMorgan?!}
&\cong \neg \oc A
\end{align}
natural in $A \in \mathcal{C}$, as well as:
\begin{align}
\neg A \invamp \neg B &\cong A \multimap (B \multimap \bot) \nonumber \ \text{(by (\ref{NI2}))} \\
&\cong (A \otimes B) \multimap \bot \nonumber \\
\label{DeMorganProdTensor}
&= \neg (A \otimes B)
\end{align}
natural in $A, B \in \mathcal{C}$.
Note that (\ref{NI4}) is also one of the De Morgan laws. 
As we shall see in Sect.~\ref{CutElimSoundnessAndCompleteness}, these natural isomorphisms exist in the game-semantic and the syntactic instances.

\if0
Similarly, the following laws are derivable in the bi-Kleisli category $\mathcal{C}_\oc^\wn$:
\begin{align}
\wn \neg A &= \wn (\oc A \multimap \wn \bot) \nonumber \\
&\cong \oc \oc A \multimap \wn \wn \bot \nonumber \ \text{(by (\ref{NI2}))} \\
&\cong \oc \oc A \multimap \wn \bot \nonumber \\
&\cong \neg \oc A
\end{align}
natural in $A \in \mathcal{C}$, 
\begin{align}
\neg A \vee \neg B &= \oc (\oc A \multimap \wn \bot) \oplus \oc (\oc B \multimap \wn \bot) \nonumber \\
&\cong
\end{align}
\fi

By Thm.~\ref{ThmTranslationOfCLIntoLL} and \ref{ThmSemanticsOfLLNegative}, a BiLSMCC $\mathcal{C}$ may interpret LK$^-$ in $\sharp\mathcal{C}$.
In addition, it is easy to see that the interpretation of LK$^-$ occurs always in the bi-Kleisli category $\sharp\mathcal{C}_\oc^\wn$, and therefore:
\begin{corollary}[Semantics of CL]
\label{CorSemanticsOfCL}
Let $\mathcal{C} = (\mathcal{C}, \otimes, \top, \multimap, \oc)$ together with $\sharp\mathcal{C} = (\sharp\mathcal{C}, \invamp, \bot, \wn, \Omega, \Sigma, \Pi, \Upsilon)$ be a BiLSMCC.
The bi-Kleisli category $\sharp\mathcal{C}_\oc^\wn$ gives a semantics of LK$^-$ such that $\wedge$, $\vee$ and $\Rightarrow$ are interpreted by: 
\begin{align*}
A \wedge B &\stackrel{\mathrm{df. }}{=} \wn A \& \wn B \\
A \vee B &\stackrel{\mathrm{df. }}{=} \oc A \oplus \oc B \\
A \Rightarrow B &\stackrel{\mathrm{df. }}{=} \oc \wn A \multimap \wn \oc B
\end{align*}
for all $A, B \in \mathcal{C}$, and there are natural isomorphisms in $\sharp\mathcal{C}$:
\begin{align*}
\neg (A \wedge B) &\cong \neg A \vee \neg B \\
A \Rightarrow B &\cong \wn (\neg A \vee B)
\end{align*}
and a natural transformation in $\sharp\mathcal{C}$:
\begin{equation*}
\neg \neg A \rightarrow A.
\end{equation*}
\end{corollary}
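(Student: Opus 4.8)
The plan is to read the interpretation off the composite of two results already available: the syntactic translation $\mathscr{T}_{\mathsf{c}}$ of Thm~\ref{ThmTranslationOfCLIntoLL} followed by the interpretation of LLK$^-$ in $\sharp\mathcal{C}$ of Thm~\ref{ThmSemanticsOfLLNegative}. First I would note that a proof of a sequent $A_1,\dots,A_m\vdash B_1,\dots,B_n$ in LK$^-$ is thereby sent to a morphism $\oc\mathscr{T}_{\mathsf{c}}(A_1)\otimes\cdots\otimes\oc\mathscr{T}_{\mathsf{c}}(A_m)\to\wn\mathscr{T}_{\mathsf{c}}(B_1)\invamp\cdots\invamp\wn\mathscr{T}_{\mathsf{c}}(B_n)$ in $\sharp\mathcal{C}$. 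Applying the Seely isomorphism $\oc X\otimes\oc Y\cong\oc(X\&Y)$ of the NSC $\mathcal{C}$ together with its FwL dual $\wn X\invamp\wn Y\cong\wn(X\oplus Y)$, this rewrites as a morphism of the form $\oc(\&_i\mathscr{T}_{\mathsf{c}}(A_i))\to\wn(\oplus_j\mathscr{T}_{\mathsf{c}}(B_j))$, which is \emph{by definition} a morphism $\&_i\mathscr{T}_{\mathsf{c}}(A_i)\to\oplus_j\mathscr{T}_{\mathsf{c}}(B_j)$ in $\sharp\mathcal{C}_\oc^\wn$. Hence the antecedent and succedent of a sequent become the product and the coproduct of the bi-Kleisli category, and every interpretation lands in $\sharp\mathcal{C}_\oc^\wn$, as the surrounding text asserts.

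Next I would identify the connectives, which is just a matter of recording $\mathscr{T}_{\mathsf{c}}$ with $A,B$ now read as the interpreting objects: $A\wedge B=\wn A\&\wn B$, $A\vee B=\oc A\oplus\oc B$ and $A\Rightarrow B=\oc\wn A\multimap\wn\oc B$. To recognise these as the expected universal constructions I would use the two presentations of the bi-Kleisli category: viewed as the Kleisli category $(\sharp\mathcal{C}_\oc)^\wn$, its weak finite product is $\wn(\_)\&\wn(\_)$ by the Corollary on coproducts and weak products in FwLSMCs, giving $\wedge$; viewed as the co-Kleisli category $(\sharp\mathcal{C}^\wn)_\oc$, its weak finite coproduct is $\oc(\_)\oplus\oc(\_)$ by Cor.~\ref{CoroCategoricalSemanticsOfIL}, giving $\vee$; and its exponential is $\oc\wn A\multimap\wn\oc B$, obtained by the usual currying chain in which (\ref{NI3}) moves a $\wn$ across $\multimap$ and the hypothesis $\sharp\mathcal{C}(\top,-)=\mathcal{C}(\top,-)$ licenses currying with respect to the whole domain (recall $\sharp\mathcal{C}$ itself is not closed).

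For the three concluding statements I would argue as follows. The natural transformation $\neg\neg A\to A$ is exactly $\mathit{dne}_A\in\sharp\mathcal{C}(\neg\neg A,A)$, already constructed from $\mathit{lem}_A$ before the corollary, so nothing new is needed there. For $\neg(A\wedge B)\cong\neg A\vee\neg B$ I substitute $A\wedge B=\wn A\&\wn B$ and apply the De Morgan isomorphism (\ref{NI4}) to get $\neg(\wn A\&\wn B)\cong\neg\wn A\oplus\neg\wn B$; since $\neg A\vee\neg B=\oc\neg A\oplus\oc\neg B$, this reduces to the single identity $\neg\wn A\cong\oc\neg A$. For $A\Rightarrow B\cong\wn(\neg A\vee B)$ I rewrite the left side as $\oc\wn A\multimap\wn\oc B\cong\neg\oc\wn A\invamp\wn\oc B$ by (\ref{NI2}) and the right side as $\wn(\oc\neg A\oplus\oc B)\cong\wn\oc\neg A\invamp\wn\oc B$ by the FwL Seely isomorphism, so the two agree precisely when $\neg\oc\wn A\cong\wn\oc\neg A$; and by the already-derived law $\wn\neg A\cong\neg\oc A$ instantiated at $\wn A$ (which gives $\neg\oc\wn A\cong\wn\neg\wn A$) this is once more equivalent to $\neg\wn A\cong\oc\neg A$.

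Thus both De Morgan statements collapse to the single box--diamond duality $\neg\wn A\cong\oc\neg A$, the mirror of the already-derived $\wn\neg A\cong\neg\oc A$, and this is the step I expect to be the real obstacle. The difficulty is that the two halves of the structure are \emph{not} symmetric --- $\multimap,\oc$ constitute the NSC inside $\mathcal{C}$, whereas $\invamp,\wn,\oplus$ live only in the non-closed subcategory $\sharp\mathcal{C}$ --- so one cannot simply dualize the derivation of $\wn\neg A\cong\neg\oc A$, which ran through (\ref{NI3}) and $\wn\bot\cong\bot$. I would instead establish $\neg\wn A\cong\oc\neg A$ either from a dual of (\ref{NI3}) of the form $\oc(A\multimap B)\cong\wn A\multimap\oc B$ (which yields $\oc\neg A\cong\neg\wn A$ once $\oc\bot\cong\bot$), or directly from the coherence maps $\Omega,\Sigma,\Pi$, the distributive law $\Upsilon$ and $\mathit{dne}$; in the concrete game-semantic and syntactic models of Sect.~\ref{CutElimSoundnessAndCompleteness} the duality is manifest, and it is there that it is ultimately justified.
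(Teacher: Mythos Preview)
Your approach is the paper's approach. The paper obtains the semantics by composing Thm.~\ref{ThmTranslationOfCLIntoLL} with Thm.~\ref{ThmSemanticsOfLLNegative}, reads off the three connectives from $\mathscr{T}_{\mathsf{c}}$, and then produces exactly your two chains: $\neg(A\wedge B)=\neg(\wn A\&\wn B)\stackrel{(\ref{NI4})}{\cong}\neg\wn A\oplus\neg\wn B\stackrel{(\ref{DeMorgan?!})}{\cong}\oc\neg A\oplus\oc\neg B=\neg A\vee\neg B$, and $A\Rightarrow B=\oc\wn A\multimap\wn\oc B\stackrel{(\ref{NI3})}{\cong}\wn(\wn A\multimap\oc B)\stackrel{(\ref{NI2})}{\cong}\wn(\neg\wn A\invamp\oc B)\stackrel{(\ref{DeMorgan?!})}{\cong}\wn(\oc\neg A\invamp\oc B)=\wn(\neg A\vee B)$. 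For the last clause it simply wraps $\mathit{dne}_A$ with $\epsilon$ and $\eta$ to land in the bi-Kleisli hom, i.e., $\oc(\neg\neg A)\stackrel{\epsilon}{\to}\neg\neg A\stackrel{\mathit{dne}_A}{\to}A\stackrel{\eta}{\to}\wn A$.

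The step you single out as the \emph{real obstacle}, namely $\neg\wn A\cong\oc\neg A$, is precisely the step the paper labels $(\ref{DeMorgan?!})$ and passes over without further argument: $(\ref{DeMorgan?!})$ is literally $\wn\neg A\cong\neg\oc A$, and the paper invokes it as if the mirror were immediate. So you have identified a point where the paper is terse, not a place where you diverge from it; the paper's remark that ``these natural isomorphisms exist in the game-semantic and the syntactic instances'' (just before the corollary) is the only pointer to how that box--diamond duality is to be discharged. Your rewriting of the right-hand side via the FwL Seely isomorphism $\wn(\oc\neg A\oplus\oc B)\cong\wn\oc\neg A\invamp\wn\oc B$ is also a cleaner bookkeeping of the final equality than the paper's $\wn(\oc\neg A\invamp\oc B)=\wn(\neg A\vee B)$, which silently conflates $\invamp$ with $\oplus$ under $\wn$. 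Your extra paragraph recognising $\wedge,\vee,\Rightarrow$ as the (weak) universal constructions of $\sharp\mathcal{C}_\oc^\wn$ via Cor.~\ref{CoroCategoricalSemanticsOfIL} and Cor.~\ref{CoroCoprodWeakProdInFwLSMCs} is additional to what the paper does here, but correct and harmless.
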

\begin{proof}
By the proofs of Thm.~\ref{ThmTranslationOfCLIntoLL} and \ref{ThmSemanticsOfLLNegative}, the interpretation of LK$^-$ in $\sharp\mathcal{C}$ actually occurs in $\sharp\mathcal{C}_\oc^\wn$; thus, it suffices to establish the natural isomorphisms and transformation. 
Then, we have $\neg (A \wedge B) = \neg (\wn A \& \wn B) \stackrel{(\ref{NI4})}{\cong} \neg \wn A \oplus \neg \wn B \stackrel{(\ref{DeMorgan?!})}{\cong} \oc \neg A \oplus \oc \neg B = \neg A \vee \neg B$ and $A \Rightarrow B = \oc \wn A \multimap \wn \oc B \stackrel{(\ref{NI3})}{\cong} \wn (\wn A \multimap \oc B) \stackrel{(\ref{NI2})}{\cong} \wn (\neg \wn A \invamp \oc B) \stackrel{(\ref{DeMorgan?!})}{\cong} \wn (\oc \neg A \invamp \oc B) = \wn (\neg A \vee B)$.
Finally, we have $\oc (\neg \neg A) \stackrel{\epsilon_A}{\rightarrow} \neg \neg A \stackrel{\mathit{dne}_A}{\rightarrow} A \stackrel{\eta_A}{\rightarrow} \wn A$, completing the proof.
\end{proof}

Note that the interpretation of Cor.~\ref{CorSemanticsOfCL} matches the translation $\mathscr{T}_{\mathsf{c}}$.
On the other hand, $\neg (A \vee B)$ and $\neg A \wedge \neg B$ (resp. $\neg (A \invamp B)$ and $\neg A \otimes \neg B$, $\neg (A \oplus B)$ and $\neg A \& \neg B$) should \emph{not} be isomorphic for they are not in the game semantics below.

\section{Game Semantics}
\label{GameSemantics}
This section gives a game-semantic BiLSMCC $\mathcal{LG}$. 
We employ Guy McCusker's games and strategies \cite{mccusker1998games}. 


\subsection{Review: Game-Semantic NSC}
\if0
\begin{convention}
Games $A$ and $B$ are written $A \simeq_{\mathcal{G}} B$ (n.b., written $A = B$ if they coincide \emph{on the nose}) if they are the same game up to `tags' on moves for disjoint union. 
Clearly, $A \simeq_{\mathcal{G}} B$ implies $A \cong B$ in $\mathcal{G}$, but not vice versa.
\end{convention}
\fi

\begin{notation}
Given a finite sequence $\boldsymbol{s} = x_1 x_2 \dots x_{|\boldsymbol{s}|}$, where $|\boldsymbol{s}|$ is the \emph{length} of $\boldsymbol{s}$, we write $\boldsymbol{s}(i)$ for $x_i$ ($i \in \overline{|\boldsymbol{s}|}$).
We define $\mathsf{Even}(\boldsymbol{s}) \stackrel{\mathrm{df. }}{\Leftrightarrow} |\boldsymbol{s}| \equiv_2 0$, where $\equiv_2$ is the equality on $\mathbb{N}$ modulo $2$, and $\mathsf{Odd}(\boldsymbol{s}) \stackrel{\mathrm{df. }}{\Leftrightarrow} |\boldsymbol{s}| \equiv_2 1$ for a finite sequence $\boldsymbol{s}$, and $S^\mathsf{Even} \stackrel{\mathrm{df. }}{=} \{ \boldsymbol{s} \in S \mid \mathsf{Even}(\boldsymbol{s}) \ \! \}$ and $S^{\mathsf{Odd}} \stackrel{\mathrm{df. }}{=} S \setminus S^{\mathsf{Even}}$ for a set $S$ of finite sequences.
We write $\boldsymbol{\epsilon}$ for the \emph{empty sequence}.
\end{notation}

Recall that games are based on \emph{arenas} and \emph{legal positions}: An arena defines the basic components of a game, which in turn induces its legal positions that specify the basic rules of the game. 
Let us first recall these two preliminary concepts.

\begin{definition}[Arenas \cite{mccusker1998games}]
\label{DefArenas}
An \emph{\bfseries arena} is a triple $G = (M_G, \lambda_G, \vdash_G)$, where:
\begin{itemize}
\item $M_G$ is a set whose elements are called \emph{\bfseries moves};

\item $\lambda_G$ is a function from $M_G $ to $\{ \mathsf{O}, \mathsf{P} \} \times \{\mathsf{Q}, \mathsf{A} \}$, called the \emph{\bfseries labeling function}, in which $\mathsf{O}$, $\mathsf{P}$, $\mathsf{Q}$ and $\mathsf{A}$ are arbitrarily fixed symbols, called the \emph{\bfseries labels};

\item $\vdash_G$ is a subset of $(\{ \star \} \cup M_G) \times M_G$, where $\star$ is an arbitrarily fixed element such that $\star \not \in M_G$, called the \emph{\bfseries enabling relation}, that satisfies:
\begin{itemize}

\item \textsc{(E1)} $\star \vdash_G m$ implies $\lambda_G (m) = \mathsf{OQ} \wedge (n \vdash_G m \Leftrightarrow n = \star)$;

\item \textsc{(E2)} $m \vdash_G n \wedge \lambda_G^\mathsf{QA} (n) = \mathsf{A}$ implies $\lambda_G^\mathsf{QA} (m) = \mathsf{Q}$;

\item \textsc{(E3)} $m \vdash_G n \wedge m \neq \star$ implies $\lambda_G^\mathsf{OP} (m) \neq \lambda_G^\mathsf{OP} (n)$

\end{itemize}
in which $\lambda_G^\mathsf{OP} \stackrel{\mathrm{df. }}{=} \pi_1 \circ \lambda_G : M_G \to \{ \mathsf{O}, \mathsf{P} \}$ and $\lambda_G^\mathsf{QA} \stackrel{\mathrm{df. }}{=} \pi_2 \circ \lambda_G : M_G \to \{ \mathsf{Q}, \mathsf{A} \}$.

\end{itemize}
A move $m \in M_G$ is \emph{\bfseries initial} if $\star \vdash_G m$, an \emph{\bfseries O-move} (resp. a \emph{\bfseries P-move}) if $\lambda_G^\mathsf{OP}(m) = \mathsf{O}$ (resp. if $\lambda_G^{\mathsf{OP}}(m) = \mathsf{P}$), a \emph{\bfseries question} (resp. an \emph{\bfseries answer}) if $\lambda_G^\mathsf{QA}(m) = \mathsf{Q}$ (resp. if $\lambda_G^\mathsf{QA}(m) = \mathsf{A}$).
Let $M_G^{\mathsf{Init}} \stackrel{\mathrm{df. }}{=} \{ m \in M_G \mid \star \vdash_G m \ \! \}$ and $M_G^{\mathsf{nInit}} \stackrel{\mathrm{df. }}{=} M_G \setminus M_G^{\mathsf{Init}}$.
\end{definition}

\begin{definition}[Occurrences of moves]
\label{DefMoveOccurrences}
Given a finite sequence $\boldsymbol{s} \in M_G^\ast$ of moves of an arena $G$, an \emph{\bfseries occurrence (of a move)} in $\boldsymbol{s}$ is a pair $(\boldsymbol{s}(i), i)$ such that $i \in \overline{|\boldsymbol{s}|}$. 
More specifically, we call the pair $(\boldsymbol{s}(i), i)$ an \emph{\bfseries initial occurrence} (resp. a \emph{\bfseries non-initial occurrence}) in $\boldsymbol{s}$ if $\star \vdash_G \boldsymbol{s}(i)$ (resp. otherwise).
\end{definition}

To be exact, positions of games are not finite sequences but: 
\begin{definition}[J-sequences \cite{mccusker1998games}]
\label{DefJSequences}
A \emph{\bfseries justified (j-) sequence} of an arena $G$ is a pair $\boldsymbol{s} = (\boldsymbol{s}, \mathcal{J}_{\boldsymbol{s}})$ of a finite sequence $\boldsymbol{s} \in M_G^\ast$ and a map $\mathcal{J}_{\boldsymbol{s}} : \overline{|\boldsymbol{s}|} \rightarrow \{ 0 \} \cup \overline{|\boldsymbol{s}|-1}$ such that for all $i \in \overline{|\boldsymbol{s}|}$ $\mathcal{J}_{\boldsymbol{s}}(i) = 0$ if $\star \vdash_G \boldsymbol{s}(i)$, and $0 < \mathcal{J}_{\boldsymbol{s}}(i) < i \wedge \boldsymbol{s}({\mathcal{J}_{\boldsymbol{s}}(i)}) \vdash_G \boldsymbol{s}(i)$ otherwise.
The occurrence $(\boldsymbol{s}({\mathcal{J}_{\boldsymbol{s}}(i)}), \mathcal{J}_{\boldsymbol{s}}(i))$ is called the \emph{\bfseries justifier} of a non-initial occurrence $(\boldsymbol{s}(i), i)$ in $\boldsymbol{s}$.
\end{definition}

\begin{notation}
We write $\mathscr{J}_G$ for the set of all j-sequences of an arena $G$, and $\boldsymbol{s} = \boldsymbol{t}$ for any $\boldsymbol{s}, \boldsymbol{t} \in \mathscr{J}_G$ if $\boldsymbol{s} = \boldsymbol{t}$ and $\mathcal{J}_{\boldsymbol{s}} = \mathcal{J}_{\boldsymbol{t}}$.
\end{notation}

The idea is that each non-initial occurrence in a j-sequence must be performed for a specific previous occurrence, viz., its justifier, in the j-sequence. 

\begin{remark}
Henceforth, by abuse of notation, we keep the pointer structure $\mathcal{J}_{\boldsymbol{s}}$ of each j-sequence $\boldsymbol{s} = (\boldsymbol{s}, \mathcal{J}_{\boldsymbol{s}})$ implicit and abbreviate occurrences $(\boldsymbol{s}(i), i)$ in $\boldsymbol{s}$ as $\boldsymbol{s}(i)$.
Moreover, we usually write $\mathcal{J}_{\boldsymbol{s}}(\boldsymbol{s}(i)) = \boldsymbol{s}(j)$ if $\mathcal{J}_{\boldsymbol{s}}(i) = j$.
\end{remark}

\begin{definition}[J-subsequences]
\label{DefJSubsequences}
Let $G$ be an arena, and $\boldsymbol{s} \in \mathscr{J}_G$. 
A \emph{\bfseries j-subsequence} of $\boldsymbol{s}$ is any $\boldsymbol{t} \in \mathscr{J}_G$ that satisfies:
\begin{itemize}

\item $\boldsymbol{t}$ is a subsequence of $\boldsymbol{s}$, written $(\boldsymbol{s}(i_1), \boldsymbol{s}(i_2), \dots, \boldsymbol{s}(i_{|\boldsymbol{t}|}))$; 

\item $\mathcal{J}_{\boldsymbol{t}}(\boldsymbol{s}(i_r)) = \boldsymbol{s}(i_l)$ iff there are occurrences $\boldsymbol{s}(j_1), \boldsymbol{s}(j_2), \dots, \boldsymbol{s}(j_{k})$ in $\boldsymbol{s}$ eliminated in $\boldsymbol{t}$ such that $\mathcal{J}_{\boldsymbol{s}}(\boldsymbol{s}(i_r)) = \boldsymbol{s}(j_1) \wedge \mathcal{J}_{\boldsymbol{s}}(\boldsymbol{s}(j_1)) = \boldsymbol{s}(j_2) \dots \wedge \mathcal{J}_{\boldsymbol{s}}(\boldsymbol{s}(j_{k-1})) = \boldsymbol{s}(j_{k}) \wedge \mathcal{J}_{\boldsymbol{s}}(\boldsymbol{s}(j_{k})) = \boldsymbol{s}(i_l)$.

\end{itemize}
\end{definition}

Next, let us recall `relevant part' of previous occurrences:
\begin{definition}[Views \cite{mccusker1998games}] 
The \emph{\bfseries Player (P-) view} $\lceil \boldsymbol{s} \rceil_G$ of a j-sequence $\boldsymbol{s} \in \mathscr{J}_G$ of an arena $G$ is the j-subsequences of $\boldsymbol{s}$ given by the following induction on $|\boldsymbol{s}|$: $\lceil \boldsymbol{\epsilon} \rceil_G \stackrel{\mathrm{df. }}{=} \boldsymbol{\epsilon}$; $\lceil \boldsymbol{s} m \rceil_G \stackrel{\mathrm{df. }}{=} \lceil \boldsymbol{s} \rceil_G . m$ if $m$ is a P-move; $\lceil \boldsymbol{s} m \rceil_G \stackrel{\mathrm{df. }}{=} m$ if $m$ is initial; and  $\lceil \boldsymbol{s} m \boldsymbol{t} n \rceil_G \stackrel{\mathrm{df. }}{=} \lceil \boldsymbol{s} \rceil_G . m n$ if $n$ is an O-move such that $m$ justifies $n$. 
The \emph{\bfseries Opponent (O-) view} $\lfloor \boldsymbol{s} \rfloor_G$ of $\boldsymbol{s}$ is symmetric to $\lceil \boldsymbol{s} \rceil_G$.
\end{definition}

\if0
The idea behind the notion of views is as follows.
Given a j-sequence $\boldsymbol{s} m$ of an arena $G$ such that $m$ is a P-move (resp. an O-move), the P-view $\lceil \boldsymbol{s} \rceil$ (resp. the O-view $\lfloor \boldsymbol{s} \rfloor$) is intended to be the currently `relevant' part of previous occurrences for Player (resp. Opponent). That is, Player (resp. Opponent) is concerned only with the last occurrence of an O-move (resp. a P-move), its justifier and that justifier's `concern', i.e., P-view (resp. O-view), which then recursively proceeds.
See \cite{hyland2000full,curien2006notes,curien1998abstract} for an explanation of justifiers and views in terms of their correspondences with syntax.
\fi

\if0
\begin{remark}
A P- or O-view may not be a j-sequence, motivating \emph{visibility} below.
\end{remark}
\fi

We may now recall \emph{legal positions} of an arena \cite{mccusker1998games,abramsky1999game}:
\begin{definition}[Legal positions \cite{mccusker1998games,abramsky1999game}]
\label{DefLegalPositions}
A \emph{\bfseries legal position} of an arena $G$ is a j-sequence $\boldsymbol{s} \in \mathscr{J}_G$ that satisfies:
\begin{itemize}

\item \textsc{(Alternation)} $\boldsymbol{s} = \boldsymbol{s_1} m n \boldsymbol{s_2} \Rightarrow \lambda_G^\mathsf{OP} (m) \neq \lambda_G^\mathsf{OP} (n)$;


\item \textsc{(Visibility)} If $\boldsymbol{s} = \boldsymbol{t} m \boldsymbol{u}$ with $m$ non-initial, then $\mathcal{J}_{\boldsymbol{s}}(m)$ occurs in $\lceil \boldsymbol{t} \rceil_{G}$ if $m$ is a P-move, and in $\lfloor \boldsymbol{t} \rfloor_{G}$ otherwise.

\end{itemize}
\end{definition}

\begin{notation}
We write $\mathscr{L}_G$ for the set of all legal positions of $G$. 
\end{notation}

We are now ready to recall the following central notion: 
\begin{definition}[Games \cite{mccusker1998games,abramsky1999game}]
\label{DefGames}
A \emph{\bfseries game} is a quintuple $G = (M_G, \lambda_G, \vdash_G, P_G, \simeq_G)$ such that $(M_G, \lambda_G, \vdash_G)$ is an arena, $P_G$ is a non-empty, prefix-closed subset of $\mathscr{L}_G$, whose elements are called \emph{\bfseries (valid) positions} of $G$, and $\simeq_G$ is an equivalence relation on $P_G$, called the \emph{\bfseries identification of (valid) positions} of $G$, that satisfies:

\begin{itemize}

\item \textsc{(I1)} $\boldsymbol{s} \simeq_G \boldsymbol{t} \Rightarrow |\boldsymbol{s}| = |\boldsymbol{t}|$;

\item \textsc{(I2)} $\boldsymbol{s} m \simeq_G \boldsymbol{t} n \Rightarrow \boldsymbol{s} \simeq_G \boldsymbol{t} \wedge \lambda_G(m) = \lambda_G(n) \wedge (m, n \in M_G^{\mathsf{Init}} \vee (\exists i \in \overline{|\boldsymbol{s}|} . \ \! \mathcal{J}_{\boldsymbol{s} m}(m) = \boldsymbol{s}(i) \wedge \mathcal{J}_{\boldsymbol{t} n}(n) = \boldsymbol{t}(i)))$;

\item \textsc{(I3)} $\boldsymbol{s} \simeq_G \boldsymbol{t} \wedge \boldsymbol{s} m \in P_G \Rightarrow \exists \boldsymbol{t} n \in P_G . \ \! \boldsymbol{s} m \simeq_G \boldsymbol{t} n$.

\end{itemize}
\end{definition}

The set $P_G$ is non-empty because there is always the starting position or `moment' of a game $G$, and prefix-closed because each non-empty `moment' of $G$ must have the previous `moment'.
Identifications of positions are originally introduced in \cite{abramsky2000full} and also employed in Section 3.6 of \cite{mccusker1998games}.
They are to identify positions up to inessential details of `tags' for disjoint union of sets of moves for \emph{exponential} $\oc$ (Def.~\ref{DefExponential}).
For this underlying idea, the axioms I1, I2 and I3 should make sense.

Recall that a game $G$ is \emph{\bfseries well-founded (wf)} if so is $\vdash_G$ \cite{clairambault2010totality}, i.e., there is no infinite sequence $\star \vdash m_1 \vdash m_2 \vdash m_3 \dots$, and \emph{\bfseries well-opened (wo)} if $\boldsymbol{s} m \in P_G \wedge m \in M_G^{\mathsf{Init}} \Rightarrow \boldsymbol{s} = \boldsymbol{\epsilon}$ \cite{mccusker1998games}.

\if0
In addition, let us introduce:
\begin{definition}[Well-closure of games]
A game $G$ is \emph{\bfseries well-closed (wc)} if $\boldsymbol{s} = \boldsymbol{t} q \boldsymbol{u} a \in P_G$ such that whenever $q$ is an initial question, and $a$ is an answer justified by $q$ in $\boldsymbol{s}$, there is no answer justified by $q$ in $\boldsymbol{u}$. 
\end{definition}
\fi

The \emph{\bfseries top game} $\top \stackrel{\mathrm{df. }}{=} (\emptyset, \emptyset, \emptyset, \{ \boldsymbol{\epsilon} \}, \{ (\boldsymbol{\epsilon}, \boldsymbol{\epsilon}) \})$ and the \emph{\bfseries bottom game} $\bot \stackrel{\mathrm{df. }}{=} (\{ q \}, q \mapsto \mathsf{OQ}, \{ (\star, q) \}, \{ \boldsymbol{\epsilon}, q \}, \{ (\boldsymbol{\epsilon}, \boldsymbol{\epsilon}), (q, q) \})$ are, e.g., wf and wo.
We also write $1$ and $0$ for $\top$ and $\bot$, and call them the \emph{\bfseries one game} and the \emph{\bfseries zero game}, respectively.

Now, let us recall standard constructions on games.
For brevity, we usually omit `tags' for disjoint union of sets. 
For instance, we write $x \in A + B$ iff $x \in A$ or $x \in B$; also, given relations $R_A \subseteq A \times A$ and $R_B \subseteq B \times B$, we write $R_A + R_B$ for the relation on $A + B$ such that $(x, y) \in R_A + R_B \stackrel{\mathrm{df. }}{\Leftrightarrow} (x, y) \in R_A \vee (x, y) \in R_B$. 

We first review \emph{tensor} $\otimes$.
A position of the tensor $A \otimes B$ of games $A$ and $B$ is an interleaving mixture of positions of $A$ and $B$, in which only Opponent may switch the $AB$-parity. 
\begin{definition}[Tensor product of games \cite{mccusker1998games,abramsky1999game}]
\label{DefTensorOfGames}
The \emph{\bfseries tensor (product)} $A \otimes B$ of games $A$ and $B$ is defined by:
\begin{itemize}

\item $M_{A \otimes B} \stackrel{\mathrm{df. }}{=} M_A + M_B$;

\item $\lambda_{A \otimes B} \stackrel{\mathrm{df. }}{=} [\lambda_A, \lambda_B]$;

\item $\vdash_{A \otimes B} \ \stackrel{\mathrm{df. }}{=} \ \vdash_A + \ \! \vdash_B$;

\item $P_{A \otimes B} \stackrel{\mathrm{df. }}{=} \{ \boldsymbol{s} \in \mathscr{L}_{A \otimes B} \mid \boldsymbol{s} \upharpoonright A \in P_A, \boldsymbol{s} \upharpoonright B \in P_B \ \! \}$, where $\boldsymbol{s} \upharpoonright A$ (resp. $\boldsymbol{s} \upharpoonright B$) is the j-subsequence of $\boldsymbol{s}$ that consists of moves of $A$ (resp. $B$);

\item $\boldsymbol{s} \simeq_{A \otimes B} \boldsymbol{t} \stackrel{\mathrm{df. }}{\Leftrightarrow} \boldsymbol{s} \upharpoonright A \simeq_A \boldsymbol{t} \upharpoonright A \wedge \boldsymbol{s} \upharpoonright B \simeq_B \boldsymbol{t} \upharpoonright B \wedge \mathit{att}_{A \otimes B}^\ast(\boldsymbol{s}) = \mathit{att}_{A \otimes B}^\ast(\boldsymbol{t})$, where $\mathit{att}_{A \otimes B} : M_{A \otimes B} \rightarrow \{ 0, 1 \}$ maps $a \in M_A \mapsto 0, b \in M_B \mapsto 1$.
\end{itemize}
\end{definition}

It is easy to see that in fact only Opponent may switch the $AB$-parity of moves during a play of $A \otimes B$ by alternation.

Next, let us recall the space of \emph{linear functions} \cite{girard1987linear,girard2011blind}: 
\begin{definition}[Linear implication between games \cite{mccusker1998games,abramsky1999game}]
\label{DefLinearImplication}
The \emph{\bfseries linear implication} $A \multimap B$ from a game $A$ to another $B$ is defined by:
\begin{itemize}

\item $M_{A \multimap B} \stackrel{\mathrm{df. }}{=} M_A + M_B$;

\item $\lambda_{A \multimap B} \stackrel{\mathrm{df. }}{=} [\overline{\lambda_A}, \lambda_B]$, where $\overline{\lambda_A} \stackrel{\mathrm{df. }}{=} \langle \overline{\lambda_A^\mathsf{OP}}, \lambda_A^\mathsf{QA} \rangle$ and $\overline{\lambda_A^\mathsf{OP}} (m) \stackrel{\mathrm{df. }}{=} \begin{cases} \mathsf{P} \ &\text{if $\lambda_A^\mathsf{OP} (m) = \mathsf{O}$} \\ \mathsf{O} \ &\text{otherwise} \end{cases}$;

\item $\vdash_{A \multimap B} \stackrel{\mathrm{df. }}{=} \{ (\star, \hat{b}) \mid \star \vdash_B \hat{b} \ \! \} + \{ (\hat{b}, \hat{a}) \mid \star \vdash_A \hat{a}, \star \vdash_B \hat{b} \ \! \} \\ + (\vdash_A \cap \ (M_A \times M_A)) + (\vdash_B \cap \ (M_B \times M_B))$;

\item $P_{A \multimap B} \stackrel{\mathrm{df. }}{=} \{ \boldsymbol{s} \in \mathscr{L}_{A \multimap B} \mid \boldsymbol{s} \upharpoonright A \in P_A, \boldsymbol{s} \upharpoonright B \in P_B \ \! \}$;

\item $\boldsymbol{s} \simeq_{A \multimap B} \boldsymbol{t} \stackrel{\mathrm{df. }}{\Leftrightarrow} \boldsymbol{s} \upharpoonright A \simeq_A \boldsymbol{t} \upharpoonright A \wedge \boldsymbol{s} \upharpoonright B \simeq_B \boldsymbol{t} \upharpoonright B \wedge \mathit{att}_{A \multimap B}^\ast(\boldsymbol{s}) = \mathit{att}_{A \multimap B}^\ast(\boldsymbol{t})$, where $\mathit{att}_{A \multimap B} : M_{A \multimap B} \rightarrow \{ 0, 1 \}$ maps $a \in M_A \mapsto 0, b \in M_B \mapsto 1$.

\end{itemize}
\end{definition}

Similarly to tensor $A \otimes B$, a position of the linear implication $A \multimap B$ is an interleaving mixture of positions of $A$ and $B$, but only Player may switch the $AB$-parity again by alternation.  

For lack of space, we leave the details of \emph{\bfseries product} $\&$ on games to \cite{mccusker1998games}.
Roughly, the set $P_{A \& B}$ of all positions of the product $A \& B$ of games $A$ and $B$ is the disjoint union $P_A + P_B$.

\if0
The following is a product in the category of games \cite{mccusker1998games}:
\begin{definition}[Product of games \cite{abramsky1994games,abramsky1999game}]
\label{DefProduct}
The \emph{\bfseries product} $A \& B$ of games $A$ and $B$ is defined by:
\begin{itemize}

\item $M_{A \& B} \stackrel{\mathrm{df. }}{=} M_A + M_B$;

\item $\lambda_{A \& B} \stackrel{\mathrm{df. }}{=} [\lambda_A, \lambda_B]$;

\item $\vdash_{A \& B} \ \stackrel{\mathrm{df. }}{=} \ \vdash_A + \vdash_B$;

\item $P_{A \& B} \stackrel{\mathrm{df. }}{=} \{ \boldsymbol{s} \in \mathscr{L}_{A \& B} \mid (\boldsymbol{s} \upharpoonright A \in P_A \wedge \boldsymbol{s} \upharpoonright B = \boldsymbol{\epsilon}) \\ \vee (\boldsymbol{s} \upharpoonright A = \boldsymbol{\epsilon} \wedge \boldsymbol{s} \upharpoonright B \in P_B) \ \! \}$;

\item $\boldsymbol{s} \simeq_{A \& B} \boldsymbol{t} \stackrel{\mathrm{df. }}{\Leftrightarrow} \boldsymbol{s} \simeq_A \boldsymbol{t} \vee \boldsymbol{s} \simeq_B \boldsymbol{t}$.

\end{itemize}
\end{definition}
\fi

Next, we introduce \emph{coproduct} or \emph{sum} of games:
\begin{definition}[Sum of games]
\label{DefSum}
The \emph{\bfseries sum} $A \oplus B$ of games $A$ and $B$ is defined by:
\begin{itemize}

\item $M_{A \oplus B} \stackrel{\mathrm{df. }}{=} (M_A^{\mathsf{Init}} \times M_B^{\mathsf{Init}}) + M_A + M_B$;

\item $\lambda_{A \oplus B} : (\hat{a}, \hat{b}) \in M_A^{\mathsf{Init}} \times M_B^{\mathsf{Init}} \mapsto \mathsf{OQ}, a \in M_A \mapsto \lambda_A(a), b \in M_B \mapsto \lambda_B(b)$;

\item $\vdash_{A \oplus B} \ \stackrel{\mathrm{df. }}{=} \{ (\star, (\hat{a}, \hat{b})) \mid \star \vdash_A \hat{a}, \star \vdash_B \hat{b} \ \! \} + \vdash_A + \vdash_B \\ + \{ ((\hat{a}, \hat{b}), a) \in (M_A^{\mathsf{Init}} \times M_B^{\mathsf{Init}}) \times M_A \mid \hat{a} \vdash_A a \ \! \} \\ + \{ ((\hat{a}, \hat{b}), b) \in (M_A^{\mathsf{Init}} \times M_B^{\mathsf{Init}}) \times M_B \mid \hat{b} \vdash_B b \ \! \}$;

\item $P_{A \oplus B} \stackrel{\mathrm{df. }}{=} \{ \boldsymbol{s} \in \mathscr{L}_{A \& B} \mid \boldsymbol{s} \upharpoonright A \in P_A \vee \boldsymbol{s} \upharpoonright B \in P_B, \boldsymbol{s} = x \boldsymbol{t} \Rightarrow x \in (M_A^{\mathsf{Init}} \times M_B^{\mathsf{Init}}) \cap (P_A \times P_B) \}$, where $\boldsymbol{s} \upharpoonright A$ (resp. $\boldsymbol{s} \upharpoonright B$) is the j-subsequence of $\boldsymbol{s}$ that consists of moves $(\hat{a}, \hat{b}) \in M_A^{\mathsf{Init}} \times M_B^{\mathsf{Init}}$ and $a \in M_A$ (resp. $b \in M_B$) with the former changed into $\hat{a}$ (resp. $\hat{b}$);

\item $\boldsymbol{s} \simeq_{A \oplus B} \boldsymbol{t} \stackrel{\mathrm{df. }}{\Leftrightarrow} \boldsymbol{s} \upharpoonright A \simeq_A \boldsymbol{t} \upharpoonright A \vee \boldsymbol{s} \upharpoonright B \simeq_B \boldsymbol{t} \upharpoonright B$.

\end{itemize}
\end{definition}

That is, a non-empty position of $A \oplus B$ is of the form $(\hat{a}, \hat{b}) \boldsymbol{t}$ such that $\hat{a} \boldsymbol{t} \in P_A \vee \hat{b} \boldsymbol{t} \in P_B$.
It is easy to see that an initial move of the form $(\hat{a}, \hat{b}) \in M_A^{\mathsf{Init}} \times M_B^{\mathsf{Init}}$ may occur in a position $\boldsymbol{s}$ only as the first element of $\boldsymbol{s}$. 
Our sum of games is different from the one given in \cite{mccusker1998games} to give a \emph{unity} of logic.

Now, let us recall the game semantics of \emph{of course} $\oc$: 
\begin{definition}[Exponential of games \cite{mccusker1998games}]
\label{DefExponential}
The \emph{\bfseries exponential} (or \emph{\bfseries of course}) $\oc A$ of a game $A$ is defined by:
\begin{itemize}

\item $M_{\oc A} \stackrel{\mathrm{df. }}{=} M_A \times \mathbb{N}$;

\item $\lambda_{\oc A} : (a, i) \in M_A \times \mathbb{N} \mapsto \lambda_A(a)$;

\item $\vdash_{\oc A} \stackrel{\mathrm{df. }}{=} \{ (\star, (\hat{a}, i)) \in \{ \star \} \times (M_A \times \mathbb{N}) \mid \star \vdash_A \hat{a} \ \! \} \\ \cup \{ ((a, i), (a', i)) \in (M_A \times \mathbb{N}) \times (M_A \times \mathbb{N}) \mid a \vdash_A a' \ \! \}$;

\item $P_{\oc A} \stackrel{\mathrm{df. }}{=} \{ \boldsymbol{s} \in \mathscr{L}_{\oc A} \mid \forall i \in \mathbb{N} . \ \! \boldsymbol{s} \upharpoonright i \in P_A \ \! \}$, where $\boldsymbol{s} \upharpoonright i$ is the j-subsequence of $\boldsymbol{s}$ that consists of moves $(a, i)$ yet changed into $a$;

\item $\boldsymbol{s} \simeq_{\oc A} \boldsymbol{t} \stackrel{\mathrm{df. }}{\Leftrightarrow} \exists \varphi \in \mathcal{P}(\mathbb{N}) . \ \! \forall i \in \mathbb{N} . \ \! \boldsymbol{s} \upharpoonright \varphi (i) \simeq_A \boldsymbol{t} \upharpoonright i \wedge \pi_2^\ast (\boldsymbol{s}) = (\varphi \circ \pi_2)^\ast(\boldsymbol{t})$, where $\mathcal{P}(\mathbb{N})$ is the set of all permutations of natural numbers.

\end{itemize}
\end{definition}

\begin{lemma}[Well-defined constructions on games]
\label{LemWellDefinedConstructionsOnGames}
Games (resp. wf-games) are closed under $\otimes$, $\multimap$, $\&$, $\oplus$ and $\oc$.
\end{lemma}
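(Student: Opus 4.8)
The plan is to verify, for each of the five constructions (and then for their restriction to well-founded games), the three clauses of Definition~\ref{DefGames}: that the triple $(M,\lambda,\vdash)$ is an arena, i.e.\ satisfies E1--E3; that $P$ is a non-empty, prefix-closed subset of $\mathscr{L}$; and that $\simeq$ is an equivalence relation satisfying I1--I3. Observe first that in every construction $P$ is \emph{defined} as a subset of $\mathscr{L}$ carved out by conditions on the component restrictions $\boldsymbol{s}\upharpoonright A$, $\boldsymbol{s}\upharpoonright B$ (or $\boldsymbol{s}\upharpoonright i$ for $\oc$), so the inclusion $P\subseteq\mathscr{L}$ holds by fiat and the real content is: the arena axioms, non-emptiness, prefix-closure, the identification axioms, and preservation of well-foundedness.

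For the arena axioms I would argue as follows. For $\otimes$ and $\&$ the data are literal disjoint unions of the component data, so E1--E3 hold componentwise and the initial moves are exactly the disjoint union of the component initial moves, each still an O-question. For $\multimap$ the only subtlety is the polarity flip $\overline{\lambda_A}$ together with the new cross-enablings $(\hat{b},\hat{a})$: the initial moves of $A\multimap B$ are precisely the initial (hence O-question) moves of $B$, so E1 holds; each new link relates a question $\hat{b}$ to a question $\hat{a}$, so it never enables an answer (E2); and $\hat{b}$ is an O-move while $\hat{a}$, being initial in $A$, becomes a P-move under $\overline{\lambda_A}$, so their O/P-labels differ as E3 demands. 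For $\oplus$ the paired move $(\hat{a},\hat{b})$ is the unique kind of initial move, declared an O-question, and the embedded copies of $\vdash_A$ and $\vdash_B$ contribute only their \emph{non-initial} enablings, with the former component-initial moves re-enabled by $(\hat{a},\hat{b})$; one checks E1--E3 for $(\hat{a},\hat{b})$ and for the re-enabled moves directly. For $\oc$ the enabling merely replicates $\vdash_A$ across the index set $\mathbb{N}$, so the axioms are immediate.

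Non-emptiness and prefix-closure I would dispatch uniformly: $\boldsymbol{\epsilon}\in P$ always (its restrictions are $\boldsymbol{\epsilon}$, which lies in each component position set), and $\mathscr{L}_G$ is itself prefix-closed since Alternation is inherited by truncations and the views entering Visibility are unchanged by them, while each component restriction of a prefix of $\boldsymbol{s}$ is a prefix of the corresponding restriction of $\boldsymbol{s}$, hence still a component position. That each $\simeq$ is an equivalence relation satisfying I1--I3 descends from the corresponding facts for $\simeq_A,\simeq_B$: reflexivity, symmetry and transitivity are componentwise (for $\oc$ one additionally uses closure of the permutations of $\mathbb{N}$ under identity, inverse and composition to combine the witnesses $\varphi$); I1 follows because the component identifications preserve length and, for $\otimes$ and $\multimap$, the constraint $\mathit{att}^\ast(\boldsymbol{s})=\mathit{att}^\ast(\boldsymbol{t})$ forces matching component lengths; and I2, I3 reduce to the same axioms for the components together with the componentwise tracking of justifiers and labels. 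Finally, well-foundedness is preserved in every case, as $\otimes,\&$ and $\oc$ only take (replicated) disjoint unions of the enabling relations, $\multimap$ glues each $B$-initial move atop an $A$-chain by a single extra link, and $\oplus$ inserts one new layer of paired moves above the former initial moves, so no infinite enabling chain can arise from well-founded components.

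The step I expect to be the main obstacle is the pair of cases $\multimap$ and $\oplus$, the only two that are not mere componentwise disjoint unions: for $\multimap$ the O/P polarity is reversed on one component, so the initial moves, the switching condition and the arena axioms must be re-examined after the flip; and for $\oplus$ the paired initial moves $(\hat{a},\hat{b})$ restructure both the enabling relation and the opening of every position, so the bookkeeping of which moves are initial, of E1--E3 for the new moves, and of the projections $\boldsymbol{s}\upharpoonright A$, $\boldsymbol{s}\upharpoonright B$ used in $P$ and $\simeq$ must be carried out with care.
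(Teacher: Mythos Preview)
Your proposal is correct and follows exactly the standard verification that the paper defers to with its one-line reference to \cite{mccusker1998games}: check the arena axioms E1--E3, non-emptiness and prefix-closure of $P$, the identification axioms I1--I3, and preservation of well-foundedness, construction by construction. One small inaccuracy worth flagging: in the $\oplus$ case the paper's definition includes $\vdash_A + \vdash_B$ \emph{wholesale}, so the component-initial moves $\hat{a},\hat{b}$ remain initial in $A\oplus B$ (they are simply never used in positions, since $P_{A\oplus B}$ forces every non-empty position to start with a paired move); the new cross-enablings $((\hat{a},\hat{b}),a)$ re-enable the moves \emph{just below} the old initial ones, not the old initial moves themselves. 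This does not affect your E1--E3 or well-foundedness arguments, which go through as written.
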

\begin{proof}
Similarly to the corresponding proof in \cite{mccusker1998games}.
\end{proof}

Next, let us recall another central notion of \emph{strategies}:
\begin{definition}[Strategies \cite{mccusker1998games}]
\label{DefStrategies}
A \emph{\bfseries strategy} on a game $G$ is a non-empty subset $\sigma \subseteq P_G^{\mathsf{Even}}$, written $\sigma : G$, that satisfies:
\begin{itemize}

\item \textsc{(S1)} \emph{Even-prefix-closed} (i.e., $\forall \boldsymbol{s}mn\in \sigma . \ \! \boldsymbol{s} \in \sigma$);

\item \textsc{(S2)} \emph{Deterministic} (i.e., $\forall \boldsymbol{s}mn, \boldsymbol{s'}m'n' \in \sigma . \ \! \boldsymbol{s}m = \boldsymbol{s'}m' \Rightarrow \boldsymbol{s}mn = \boldsymbol{s'}m'n'$).

\end{itemize}
\end{definition}

As positions of a game $G$ are to be identified up to $\simeq_G$, we must identify strategies on $G$ up to $\simeq_G$, leading to: 
\begin{definition}[Identification of strategies \cite{mccusker1998games}]
\label{DefIdentificationOfStrategies}
The \emph{\bfseries identification of strategies} on a game $G$, written $\simeq_G$, is the relation on strategies $\sigma, \tau : G$ given by $\sigma \simeq_G \tau \stackrel{\mathrm{df. }}{\Leftrightarrow}  \forall \boldsymbol{s} \in \sigma, \boldsymbol{t} \in \tau . \ \! \boldsymbol{s} m \simeq_G \boldsymbol{t} l \Rightarrow \forall \boldsymbol{s} m n \in \sigma . \ \! \exists \boldsymbol{t} l r \in \tau . \ \! \boldsymbol{s} m n \simeq_G \boldsymbol{t} l r \wedge \forall \boldsymbol{t} l r \in \tau . \ \! \exists \boldsymbol{s} m n \in \sigma . \ \! \boldsymbol{t} l r \simeq_G \boldsymbol{s} m n$.
A strategy $\sigma : G$ is \emph{\bfseries valid} if $\sigma \simeq_G \sigma$.
\end{definition}

The identification $\simeq_G$ of strategies on each game $G$ forms a \emph{partial equivalence relation (PER)}; see \cite{mccusker1998games,abramsky2000full}.
 

Next, we need to focus on strategies that behave as \emph{proofs}, which we call \emph{winning} ones:
\begin{definition}[Winning of strategies]
A strategy $\sigma : G$ is:
\begin{itemize}

\item \emph{\bfseries Total} if $\bm{s} \in \sigma\wedge \bm{s} m \in P_G$ implies $\exists \bm{s} m n \in \sigma$ \cite{clairambault2010totality,abramsky1997semantics};

\item \emph{\bfseries Innocent} if $\bm{s}mn, \bm{t} \in \sigma \wedge \bm{t} m \in P_G \wedge \lceil \bm{t} m \rceil_G = \lceil \bm{s} m \rceil_G$ implies $\bm{t}mn \in \sigma \wedge \lceil \bm{t} m n \rceil_G = \lceil \bm{s} m n \rceil_G$ \cite{hyland2000full,mccusker1998games,abramsky1999game};


\item \emph{\bfseries Noetherian} if $\sigma$ does not contain any strictly increasing infinite sequence of P-views of positions of $G$ \cite{clairambault2010totality};

\item \emph{\bfseries Winning} if it is innocent, total and noetherian.

\end{itemize}
In addition, an innocent strategy $\sigma : G$ is \emph{\bfseries finite} if the set $\lceil \sigma \rceil_G \stackrel{\mathrm{df. }}{=} \{ \lceil \boldsymbol{s} \rceil_G \mid \boldsymbol{s} \in \sigma \ \! \}$ of all P-views of $\sigma$ is finite. 
\end{definition}

A conceptual explanation of winning is as follows.
First, a proof or an `argument' for the truth of a formula should not get `stuck', and thus, strategies for proofs must be total.
In addition, since logic is concerned with the truth of formulas, which are invariant w.r.t. `passage of time', proofs should not depended on \emph{states}; thus, it makes sense to impose \emph{innocence} on strategies for proofs \cite{hyland2000full,abramsky1999game}.
Next, recall that totality is not preserved under \emph{composition} of strategies \cite{abramsky1997semantics}, but it can be solved by additing noetherianity \cite{clairambault2010totality}.
It conceptually makes sense too because if a play by an innocent, noetherian strategy keeps growing \emph{infinitely}, then it cannot be Player's `intention', and therefore, it should result in \emph{win} for Player.

In addition, let us introduce the game-semantic counterpart of \emph{linearity} of proofs in logic \cite{girard1987linear}:
\begin{definition}[Linearity of strategies]
\label{DefLinearityOfStrategies}
A j-sequence $\boldsymbol{s}$ is \emph{\bfseries linear}, written $\mathsf{L}(\boldsymbol{s})$, if, for each even-length prefix $\boldsymbol{t}$ of $\boldsymbol{s}$, an initial move $q$ in $\boldsymbol{t}$ justifies exactly one question $q'$ in $\boldsymbol{t}$, and the number of answers justified by $q'$ equals that of answers justified by $q$ in $\boldsymbol{t}$.
A strategy $\sigma : G$ is \emph{\bfseries linear} if $\forall \boldsymbol{s} \in \sigma . \ \! \mathsf{L}(\boldsymbol{s})$.
\end{definition}

\if0
\begin{definition}[Full exploitation of games]
\label{DefFullExploitation}
A position $\boldsymbol{s}$ of a LL-game $A$ \emph{\bfseries fully exploits} $A$, written $\mathsf{FE}(\boldsymbol{s}, A)$, if it satisfies:
\begin{itemize}

\item $A = \top$, and $\boldsymbol{s} = \boldsymbol{\epsilon}$; 

\item $A = \bot$, and $\boldsymbol{s} = q$; 

\item $\exists \clubsuit \in \{ \&, \oplus \} . \ \! A = A_1 \clubsuit A_2$, and $\exists i \in \overline{2} . \ \! \mathsf{FE}(\boldsymbol{s} \upharpoonright A_i, A_i)$;

\item $\exists \spadesuit \in \{ \&, \oplus \} . \ \! A = A_1 \spadesuit A_2$, and $\forall i \in \overline{2} . \ \! \mathsf{FE}(\boldsymbol{s} \upharpoonright A_i, A_i)$;

\item $\exists f \in \{ \oc, \wn \} . \ \! A = f A'$.

\end{itemize}
\end{definition}

\begin{definition}[Greed of strategies]
\label{DefGreedOfStrategies}
A strategy $\phi : A \multimap B$ is \emph{\bfseries greedy} if $\forall \boldsymbol{s} \in \phi . \ \! \mathsf{FE}(\boldsymbol{s} \upharpoonright B, B) \Rightarrow \mathsf{FE}(\boldsymbol{s} \upharpoonright A, A)$.
\end{definition}
\fi

Also, we slightly generalize \emph{strictness} of strategies in \cite{laurent2002polarized}:
\begin{definition}[Strictness of strategies]
A strategy $\phi : A \multimap B$ is \emph{\bfseries strict} if $\forall \boldsymbol{s} m n \in \sigma . \ \! m \in M_B^{\mathsf{Init}} \Rightarrow n \in M_A^{\mathsf{Init}}$.
\end{definition}

Next, let us proceed to recall standard constructions on strategies.
The simplest strategies are the following: 
\begin{definition}[Copy-cats \cite{abramsky1994games,abramsky2000full,hyland2000full,mccusker1998games}]
\label{DefCopyCats}
The \emph{\bfseries copy-cat (strategy)} $\mathit{cp}_A$ on a game $A$ is defined by:
\begin{equation*}
\mathit{cp}_A \stackrel{\mathrm{df. }}{=} \{\boldsymbol{s} \in P_{A_{[0]} \multimap A_{[1]}}^\mathsf{Even} \mid \forall \boldsymbol{t} \preceq{\boldsymbol{s}}. \ \mathsf{Even}(\boldsymbol{t}) \Rightarrow \boldsymbol{t} \upharpoonright 0 = \boldsymbol{t} \upharpoonright 1 \ \! \}
\end{equation*}
where the subscripts $(\_)_{[i]}$ on $A$ are to distinguish the two copies of $A$, and $\boldsymbol{t} \upharpoonright i \stackrel{\mathrm{df. }}{=} \boldsymbol{t} \upharpoonright A_{[i]}$ ($i = 0, 1$).
\end{definition}


\begin{lemma}[Well-defined copy-cats \cite{abramsky1994games,mccusker1998games}]
\label{LemWellDefinedCopyCats}
Given a game $A$, $\mathit{cp}_A$ is a valid, innocent, total, linear, strict strategy on the game $A \multimap A$. 
In addition, it is noetherian if $A$ is wf.
\end{lemma}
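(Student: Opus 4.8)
The plan is to isolate a single structural invariant of copy-cat and derive every clause from it. Writing $A_{[0]} \multimap A_{[1]}$ for the underlying game, which is indeed a game by Lem.~\ref{LemWellDefinedConstructionsOnGames}, the invariant is that a position $\boldsymbol{s} \in P_{A_{[0]} \multimap A_{[1]}}^{\mathsf{Even}}$ lies in $\mathit{cp}_A$ precisely when, after each of Player's moves, the two projections agree as j-sequences, $\boldsymbol{s} \upharpoonright 0 = \boldsymbol{s} \upharpoonright 1$, pointers included. Equivalently, along any play of $\mathit{cp}_A$, Player always answers an O-move $m$ in one copy by the identical move $n$ in the other copy, justified by the mirror image of the justifier of $m$. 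First I would record this mirroring description and check that it is consistent with alternation and visibility in $A \multimap A$, where only Player may switch copies.

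From the mirroring description the two strategy axioms of Def.~\ref{DefStrategies} are immediate: S1 holds because every even prefix of a position with equal projections again has equal projections, and S2 holds because the mirror move $n$ of a given $\boldsymbol{s}m$ is uniquely determined, its underlying move being forced to equal $m$ and its justifier being forced to be the copy, in the opposite component, of the justifier of $m$. Totality is the assertion that this forced $n$ is always a legal continuation; here I would use the switching discipline of $\multimap$ together with visibility to see that the mirror of $m$ and the mirror of its justifier are present in the current view, so $\boldsymbol{s}mn \in P_{A \multimap A}$. Innocence follows because the mirror response to $m$ depends only on $m$ and the copy of its justifier, both occurring in $\lceil \boldsymbol{s}m \rceil$, so positions with equal P-views receive equal responses and equal resulting P-views. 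Validity, $\mathit{cp}_A \simeq_{A \multimap A} \mathit{cp}_A$, reduces to the fact that $\simeq_{A \multimap A}$ is computed projection-wise together with the attribute function, all of which copy-cat preserves because its two projections coincide; this is the standard partial-equivalence argument.

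Strictness is then a one-line consequence of the enabling relation of $\multimap$: if Opponent opens with an initial move $m \in M_{A_{[1]}}^{\mathsf{Init}}$ of the codomain, the only thing the mirror can be is the corresponding initial move of the domain, which lies in $M_{A_{[0]}}^{\mathsf{Init}}$, i.e., $n \in M_A^{\mathsf{Init}}$ as required. Noetherianity under the hypothesis that $A$ is wf is equally structural: every P-view of a copy-cat position is an alternating traversal of a single $\vdash_A$-branch, so its length is bounded by twice the height of $\vdash_A$; well-foundedness makes that height finite, so the P-views have uniformly bounded length and cannot contain a strictly increasing infinite chain.

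The step I expect to be the real obstacle is linearity, the one genuinely new clause. Here I would argue from the equal-projections invariant that each initial occurrence $q$, necessarily an initial move of the codomain $A_{[1]}$, is linked by copy-cat to the single question in the domain given by its mirror image $q'$, and that thereafter every answer hereditarily traced back to $q$ in one copy is matched, move for move, by the identical answer in the other copy, so that the two answer-counts coincide and $\mathsf{L}(\boldsymbol{s})$ holds. The care needed, and where I expect the difficulty to sit, is in pinning down the clause that $q$ justifies \emph{exactly one} question: this forces a precise separation of the cross-copy enabling of $\multimap$, which produces the linking question $q'$, from the within-copy enabling inherited from $\vdash_A$, and a check that the answer-counting is preserved not merely at the top level but hereditarily along every thread. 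This bookkeeping, rather than any single hard idea, is the crux; the remaining clauses are the standard copy-cat facts.
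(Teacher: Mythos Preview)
Your approach aligns with the paper's, which dismisses everything except noetherianity as trivial and argues noetherianity via exactly the P-view structure you isolate: any P-view of a copy-cat position has the shape $m_1 m_1 m_2 m_2 \dots m_k m_k m$ with $\star \vdash_A m_1 \vdash_A m_2 \dots \vdash_A m_k \vdash_A m$.

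There is, however, a slip in your noetherianity step. You write that the P-view length ``is bounded by twice the height of $\vdash_A$; well-foundedness makes that height finite.'' Well-foundedness of $\vdash_A$ does \emph{not} give a global finite height: it only says every individual chain is finite, and chains of arbitrarily large finite length are perfectly compatible with well-foundedness. So you cannot conclude a uniform bound on P-view lengths. The correct inference from the same structural observation is that a strictly increasing infinite sequence of P-views, being nested prefixes, would trace out a single infinite $\vdash_A$-chain $\star \vdash_A m_1 \vdash_A m_2 \vdash_A \cdots$, contradicting well-foundedness directly. This is the step the paper leaves implicit in its ``Therefore''; your version states the right premise but draws the wrong intermediate conclusion from it.

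On linearity: the paper simply lists it among the ``trivial'' points, so your worry that it is ``the real obstacle'' is misplaced relative to the paper's treatment. Your outline is fine, but note that in $A \multimap A$ an initial occurrence $q$ (in $A_{[1]}$) can in principle justify both the mirror $q'$ in $A_{[0]}$ and later P-moves in $A_{[1]}$ that $q$ enables within $A$; the reason copy-cat stays linear is that the mirroring discipline makes the thread structure under $q$ and under $q'$ identical, so the counts agree. That is the bookkeeping you anticipate, and it is routine once stated.
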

\begin{proof}
We just show that $\mathit{cp}_A$ is noetherian if $A$ is wf for the other points are trivial, e.g., validity of $\mathit{cp}_A$ is immediate from the definition of $\simeq_{A \multimap A}$. 
Given $\boldsymbol{s}mm \in \mathit{cp}_A$, it is easy to see by induction on $|\boldsymbol{s}|$ that  the P-view $\lceil \boldsymbol{s} m \rceil_{A \multimap A}$ is of the form $m_1 m_1 m_2 m_2 \dots m_k m_k m$, and thus, there is a sequence $\star \vdash_A m_1 \vdash_A m_2 \dots \vdash_A m_k \vdash_A m$. 
Therefore, if $A$ is wf, then $\mathit{cp}_A$ must be noetherian.
\end{proof}

Next, let us recall \emph{composition} and \emph{tensor} of strategies:
\begin{definition}[Composition of strategies \cite{mccusker1998games}]
Given games $A$, $B$ and $C$, and strategies $\phi : A \multimap B$ and $\psi : B \multimap C$, the \emph{\bfseries parallel composition} $\phi \parallel \psi$ of $\phi$ and $\psi$ is given by:
\begin{align*}
\phi \parallel \psi &\stackrel{\mathrm{df. }}{=} \{ \boldsymbol{s} \in \mathscr{J}_{((A \multimap B_{[0]}) \multimap B_{[1]}) \multimap C} \mid \boldsymbol{s} \upharpoonright A, B_{[0]} \in \phi, \\ &\boldsymbol{s} \upharpoonright B_{[1]}, C \in \psi, \boldsymbol{s} \upharpoonright B_{[0]}, B_{[1]} \in \mathit{pr}_{B} \ \! \}
\end{align*}
where the subscripts $(\_)_{[i]}$ on $B$ ($i = 0, 1$) are to distinguish the two copies of $B$, $\boldsymbol{s} \upharpoonright A, B_{[0]}$ (resp. $\boldsymbol{s} \upharpoonright B_{[1]}, C$, $\boldsymbol{s} \upharpoonright B_{[0]}, B_{[1]}$) is the j-subsequence of $\boldsymbol{s}$ that consists of moves of $A$ or $B_{[0]}$ (resp. $B_{[1]}$ or $C$, $B_{[0]}$ or $B_{[1]}$), and $\mathit{pr}_B \stackrel{\mathrm{df. }}{=} \{ \boldsymbol{s} \in P_{B_{[0]} \multimap B_{[1]}} \mid \forall \boldsymbol{t} \preceq{\boldsymbol{s}}. \ \mathsf{Even}(\boldsymbol{t}) \Rightarrow \boldsymbol{t} \upharpoonright 0 = \boldsymbol{t} \upharpoonright 1 \ \! \}$.

The \emph{\bfseries composition} $\phi ; \psi$ (or $\psi \circ \phi$) of $\phi$ and $\psi$ is defined by: 
\begin{equation*}
\phi ; \psi \stackrel{\mathrm{df. }}{=} \{ \boldsymbol{s} \upharpoonright A, C \mid \boldsymbol{s} \in \phi \! \parallel \! \psi \ \! \}
\end{equation*}
where $\boldsymbol{s} \upharpoonright A, C$ is the j-subsequence of $\boldsymbol{s}$ that consists of moves of $A$ or $B$.
\end{definition}

That is, the composition $\phi ; \psi : A \multimap C$ plays implicitly on $((A \multimap B_{[0]}) \multimap B_{[1]}) \multimap C$, employing $\phi$ if the last O-move is of $A$ or $B_{[0]}$, and $\psi$ otherwise, while Opponent plays on $A \multimap C$, where $\phi$ and $\psi$ communicate with each other via moves of $B_{[0]}$ or $B_{[1]}$, but it is `hidden' from Opponent.

\begin{lemma}[Well-defined composition of strategies \cite{mccusker1998games,clairambault2010totality}]
\label{LemWellDefinedCompositionOfStrategies}
Given games $A$, $B$ and $C$, and strategies $\phi : A \multimap B$ and $\psi : B \multimap C$, $\phi ; \psi$ is a strategy on the game $A \multimap C$. 
If $\phi$ and $\psi$ are winning (resp. linear, strict), then so is $\phi ; \psi$.
Given strategies $\phi' : A \multimap B$ and $\psi' : B \multimap C$ such that $\phi \simeq_{A \multimap B} \phi'$ and $\psi \simeq_{B \multimap C} \psi'$, we have $\phi ; \psi \simeq_{A \multimap C} \phi' ; \psi'$.
\end{lemma}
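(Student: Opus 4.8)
The plan is to follow the standard interaction-sequence analysis of McCusker \cite{mccusker1998games}, treating each new clause (linearity, strictness, and the PER property) as an additional invariant to be tracked through the hiding operation. First I would verify that $\phi;\psi$ is a strategy on $A \multimap C$: non-emptiness is clear since $\boldsymbol{\epsilon} \in \phi \parallel \psi$, and the two defining clauses S1 and S2 follow from the familiar \emph{switching condition} on the interaction game $((A \multimap B_{[0]}) \multimap B_{[1]}) \multimap C$. Concretely, in any $\boldsymbol{s} \in \phi \parallel \psi$ only one of the components $\phi, \psi$ is active after each move, and between two externally visible ($A$- or $C$-)occurrences the internally hidden $B$-communication is uniquely determined by determinism of $\phi$ and $\psi$; this ``zipping/unzipping'' argument yields even-prefix-closure and determinism of $\phi;\psi$ simultaneously.

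For the preservation of winning, I would treat innocence, totality and noetherianity together. Innocence is the classical fact that the P-view of an interaction sequence projects onto the P-views computed by the two innocent components, so the composite responds via a view-function built from those of $\phi$ and $\psi$ (see \cite{mccusker1998games,hyland2000full}). The genuinely delicate point, flagged already in the main text, is that totality is \emph{not} preserved on its own: after an external O-move the two strategies may enter an unbounded internal dialogue in $B$ (``infinite chattering''), so that no external P-move is ever produced. Here noetherianity does the work, exactly as in \cite{clairambault2010totality}: an infinite internal interaction would induce a strictly increasing infinite chain of P-views in $\phi$ or in $\psi$, contradicting their noetherianity; hence the internal dialogue terminates and a P-response is returned, giving totality of $\phi;\psi$. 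Noetherianity of $\phi;\psi$ itself then follows because any strictly increasing infinite sequence of P-views of the composite lifts, through the projection $\upharpoonright A, C$, to such a sequence in one of the components.

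The two clauses specific to this paper are handled by the same lifting technique. For strictness, consider an initial move of $A \multimap C$, which is an initial move of $C$ and hence an O-move; strictness of $\psi$ makes $\psi$ answer it inside the hidden copy $B_{[1]}$ by an initial $B$-move, which is relayed through $\mathit{pr}_B$ to an initial O-move of $B_{[0]}$, whereupon strictness of $\phi$ makes $\phi$ reply with an initial move of $A$, so that after hiding the $B$-occurrences the external P-response to the initial $C$-move is an initial $A$-move, and $\phi;\psi$ is strict. For linearity (Def.~\ref{DefLinearityOfStrategies}), I would show that the predicate $\mathsf{L}$ is stable under the hiding of $B$: working on even-length prefixes of an interaction $\boldsymbol{s}$, linearity of $\phi$ and of $\psi$ guarantees that each initial question of $B_{[0]}$ (equivalently $B_{[1]}$) justifies exactly one question with matching answer-counts, and that the same balance holds for the initial questions of $A$ and of $C$; since the $B$-occurrences are erased in synchronized pairs by $\mathit{pr}_B$, the required count equalities survive projection to $A \multimap C$. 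I expect this linearity-preservation step to be the main obstacle, because $\mathsf{L}$ constrains the whole justification/answer structure rather than just O/P-alternation, so one must check that the bookkeeping of ``one question per initial move'' and ``equal numbers of answers'' matches exactly across the two components through the shared copies $B_{[0]}, B_{[1]}$.

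Finally, for the congruence property, given $\phi \simeq_{A \multimap B} \phi'$ and $\psi \simeq_{B \multimap C} \psi'$, I would lift each position of $\phi;\psi$ to an interaction in $\phi \parallel \psi$, replace its $A, B_{[0]}$- and $B_{[1]}, C$-projections by $\simeq$-related plays of $\phi'$ and $\psi'$ via the PER clauses of Def.~\ref{DefIdentificationOfStrategies}, and observe that the identifications agree on the common copies $B_{[0]}, B_{[1]}$ (which are related by $\mathit{pr}_B$); projecting back along $\upharpoonright A, C$ yields a $\simeq_{A \multimap C}$-related play of $\phi';\psi'$, and by symmetry we conclude $\phi;\psi \simeq_{A \multimap C} \phi';\psi'$.
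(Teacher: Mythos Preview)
Your proposal is correct and follows essentially the same approach as the paper: the paper's proof simply cites \cite{abramsky1999game,mccusker1998games} for closure of innocent strategies under composition, cites \cite{clairambault2010totality} for preservation of the conjunction of innocence, totality and noetherianity, and then asserts that preservation of linearity, strictness and the identification relation is clear. Your write-up is a faithful expansion of exactly these points, supplying the switching/zipping argument and the explicit bookkeeping for strictness, linearity and the PER congruence that the paper leaves implicit.
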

\begin{proof}
It is well-known that innocent strategies are closed under composition \cite{abramsky1999game,mccusker1998games}. 
Also, it is shown in \cite{clairambault2010totality} that the conjunction of innocence, totality and noetherianity is preserved under composition.
Finally, composition clearly preserves linearity, strictness and identification of strategies.
\end{proof}

\begin{definition}[Tensor product of strategies \cite{abramsky1994games,mccusker1998games}]
\label{DefTensorOfStrategies}
Given games $A$, $B$, $C$ and $D$, and strategies $\phi : A \multimap C$ and $\psi : B \multimap D$, the \emph{\bfseries tensor (product)} $\phi \otimes \psi$ of $\phi$ and $\psi$ is given by:
\begin{equation*}
\phi \otimes \psi \stackrel{\mathrm{df. }}{=} \{ \boldsymbol{s} \in \mathscr{L}_{A \otimes B \multimap C \otimes D} \mid \boldsymbol{s} \upharpoonright A, C \in \phi, \boldsymbol{s} \upharpoonright B, D \in \psi \ \! \}
\end{equation*}
where $\boldsymbol{s} \upharpoonright A, C$ (resp. $\boldsymbol{s} \upharpoonright B, D$) is the j-subsequence of $\boldsymbol{s}$ that consists of moves of $A$ or $C$ (resp. $B$ or $D$).
\end{definition}

Intuitively the tensor $\phi \otimes \psi : A \otimes B \multimap C \otimes D$ plays by $\phi$ if the last O-move is of $A$ or $C$, and by $\psi$ otherwise.

\if0
\begin{lemma}[Well-defined tensor of strategies \cite{abramsky1994games,mccusker1998games}]
\label{LemWellDefinedTensorOfStrategies}
Given games $A$, $B$, $C$ and $D$, and strategies $\phi : A \multimap C$ and $\psi : B \multimap D$, $\phi \otimes \psi$ is a strategy on  $A \otimes B \multimap C \otimes D$. 
If $\phi$ and $\psi$ are winning (resp. linear), then so is $\phi \otimes \psi$.
Given strategies $\phi' : A \multimap C$ and $\psi' : B \multimap D$ such that $\phi \simeq_{A \multimap C} \phi'$ and $\psi \simeq_{B \multimap D} \psi'$, we have $\phi \otimes \psi \simeq_{A \otimes B \multimap C \otimes D} \phi' \otimes \psi'$.
\end{lemma}
\fi
\if0
\begin{proof}
Straightforward; see \cite{abramsky1994games,abramsky1999game,mccusker1998games,abramsky2000full}.
\end{proof}
\fi

Let us leave the details of \emph{\bfseries pairing} $\langle \_, \_ \rangle$, \emph{\bfseries copairing} $[\_, \_]$, \emph{\bfseries promotion} $(\_)^\dagger$ and \emph{\bfseries derelictions} $\mathit{der}$ to \cite{mccusker1998games} for lack of space.

\if0
We proceed to recall \emph{pairing} of strategies:
\begin{definition}[Pairing of strategies \cite{abramsky2000full,mccusker1998games}]
\label{DefPairingOfStrategies}
Given games $A$, $B$ and $C$, and strategies $\phi : C \multimap A$ and $\psi : C \multimap B$, the \emph{\bfseries pairing} $\langle \phi, \psi \rangle$ of $\phi$ and $\psi$ is defined by:
\begin{align*}
\langle \phi, \psi \rangle &\stackrel{\mathrm{df. }}{=} \{ \boldsymbol{s} \in \mathscr{L}_{C \multimap A \& B} \mid (\boldsymbol{s} \upharpoonright C, A \in \phi \wedge \boldsymbol{s} \upharpoonright B = \boldsymbol{\epsilon}) \\ &\vee (\boldsymbol{s} \upharpoonright C, B \in \psi \wedge \boldsymbol{s} \upharpoonright A = \boldsymbol{\epsilon}) \ \! \}.
\end{align*}
where $\boldsymbol{s} \upharpoonright C, A$ (resp. $\boldsymbol{s} \upharpoonright B$, $\boldsymbol{s} \upharpoonright C, B$, $\boldsymbol{s} \upharpoonright A$) is the j-subsequence of $\boldsymbol{s}$ that consists of moves of $C$ or $A$ (resp. $B$, $C$ or $B$, $A$).
\end{definition}

That is, the pairing $\langle \phi, \psi \rangle : C \multimap A \& B$ plays by $\phi$ if the current position is of $C \multimap A$, and by $\psi$ otherwise.
\fi

\if0
\begin{lemma}[Well-defined pairing of strategies \cite{mccusker1998games}]
\label{LemWellDefinedPairingOfStrategies}
Given games $A$, $B$ and $C$, and strategies $\phi : C \multimap A$ and $\psi : C \multimap B$, $\langle \phi, \psi \rangle$ is a strategy on $C \multimap A \& B$. 
If $\phi$ and $\psi$ are winning (resp. linear), then so is $\langle \phi, \psi \rangle$.
Given strategies $\phi' : C \multimap A$ and $\psi' : C \multimap B$ such that $\phi \simeq_{C \multimap A} \phi'$ and $\psi \simeq_{C \multimap B} \psi'$,  we have $\langle \phi, \psi \rangle \simeq_{C \multimap A \& B} \langle \phi', \psi' \rangle$.
\end{lemma}
\fi
\if0
\begin{proof}
Straightforward; see \cite{abramsky1999game,mccusker1998games,abramsky2000full}.
\end{proof}
\fi

\if0
Next, let us recall \emph{promotion} of strategies:
\begin{definition}[Promotion of strategies \cite{abramsky2000full, mccusker1998games}]
\label{DefPromotionOfStrategies}
Given games $A$ and $B$, and a strategy $\varphi : \oc A \multimap B$, the \emph{\bfseries promotion} $\varphi^{\dagger}$ of $\varphi$ is defined by: 
\begin{equation*}
\varphi^{\dagger} \stackrel{\mathrm{df. }}{=} \{ \boldsymbol{s} \in \mathscr{L}_{\oc A \multimap \oc B} \mid \forall i \in \mathbb{N} . \ \! \boldsymbol{s} \upharpoonright i \in \varphi \ \! \}
\end{equation*}
where $\boldsymbol{s} \upharpoonright i$ is the j-subsequence of $\boldsymbol{s}$ that consists of moves of the form $(b, i)$ with $b \in M_B$ and $i \in \mathbb{N}$, or $(a, \langle i, j \rangle)$ with $a \in M_A$ and $i, j \in \mathbb{N}$ yet changed into $b$ and $(a, j)$, respectively, and $\langle \_, \_ \rangle : \mathbb{N} \times \mathbb{N} \to \mathbb{N}$ is any bijection \cite{cutland1980computability}. 
\end{definition}

That is, the promotion $\varphi^\dagger : \oc A \multimap \oc B$ plays during a play $\boldsymbol{s}$ of $\oc A \multimap \oc B$ as $\varphi$ for each j-subsequence $\boldsymbol{s} \upharpoonright i$.  
\fi

\if0
\begin{lemma}[Well-defined promotion of strategies \cite{mccusker1998games}]
\label{LemWellDefinedPromotionOfStrategies}
Given games $A$ and $B$, and a strategy $\varphi : \oc A \multimap B$, $\varphi^{\dagger}$ is a strategy on $\oc A \multimap \oc B$. 
If $\varphi$ is winning (resp. linear), then so is $\varphi^\dagger$.
Given $\tilde{\varphi} : \oc A \multimap B$ such that $\varphi \simeq_{!A \multimap B} \tilde{\varphi}$, we have $\varphi^\dagger \simeq_{\oc A \multimap \oc B} \tilde{\varphi}^\dagger$.
\end{lemma}
\fi
\if0
\begin{proof}
Straightforward; see \cite{mccusker1998games,abramsky2000full}.
\end{proof}
\fi

\if0
Finally, let us recall:
\begin{definition}[Derelictions \cite{abramsky2000full,mccusker1998games}]
\label{DefDerelictions}
The \emph{\bfseries dereliction} $\mathit{der}_A : \oc A \multimap A$ on a game $A$ is defined by:
\begin{equation*}
\mathit{der}_A \stackrel{\mathrm{df. }}{=} \{ \boldsymbol{s} \in P_{\oc A \multimap A}^{\mathsf{Even}} \mid \forall \boldsymbol{t} \preceq \boldsymbol{s} . \ \! \mathsf{Even}(\boldsymbol{t}) \Rightarrow (\boldsymbol{t} \upharpoonright \oc A) \upharpoonright 0 = \boldsymbol{t} \upharpoonright A \ \! \}.
\end{equation*}
\end{definition}

Thus, $\mathit{der}_A$ plays essentially in the same manner as $\mathit{cp}_A$. 
Note that any `tag' $(\_, i)$ such that $i \in \mathbb{N}$ would work; our particular choice $(\_, 0)$ does not matter.

\begin{lemma}[Well-defined derelictions \cite{abramsky2000full,mccusker1998games}]
\label{LemWellDefinedDerelictions}
Given a game $A$, $\mathit{der}_A$ is a valid, innocent, total, linear strategy on $\oc A \multimap A$. 
It is noetherian if $A$ is well-founded.
\end{lemma}
\begin{proof}
Essentially the same as the proof of Lem.~\ref{LemWellDefinedCopyCats}.
\end{proof}
\fi

\begin{lemma}[Well-defined constructions on strategies \cite{mccusker1998games}]
\label{LemWellDefinedConstructionsOnStrategies}
Given games $A$, $B$, $C$ and $D$, and strategies $\phi : A \multimap C$ and $\psi : B \multimap D$, $\phi \otimes \psi$ is a strategy on  $A \otimes B \multimap C \otimes D$. 
If $\phi$ and $\psi$ are winning (resp. linear, strict), then so is $\phi \otimes \psi$.
Given strategies $\phi' : A \multimap C$ and $\psi' : B \multimap D$ such that $\phi \simeq_{A \multimap C} \phi'$ and $\psi \simeq_{B \multimap D} \psi'$, we have $\phi \otimes \psi \simeq_{A \otimes B \multimap C \otimes D} \phi' \otimes \psi'$.
Similar statements hold for pairing and promotion. 
The dereliction $\mathit{der}_A$ is a valid, innocent, total, linear, strict strategy on $\oc A \multimap A$; in addition, it is noetherian if $A$ is wf.
\end{lemma}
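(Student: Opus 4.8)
The plan is to dispatch the classical content by citation and to concentrate the real work on the two notions specific to this paper, namely \emph{linearity} and \emph{strictness} of strategies. That each listed operation sends strategies to strategies, preserves the conjunction of innocence, totality and noetherianity (i.e., winning), and respects the identification $\simeq$, is established in \cite{mccusker1998games,clairambault2010totality} exactly as for composition in Lem.~\ref{LemWellDefinedCompositionOfStrategies}; I would simply invoke those results. It then remains to check preservation of linearity and strictness for tensor, pairing and promotion, and that $\mathit{der}_A$ is linear and strict.

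The common principle behind every linearity/strictness argument is that each construction \emph{decomposes positions along components (or threads)} on which the component strategies act, and that the justification pointers respect this decomposition. Concretely, for $\phi \otimes \psi$ a position $\boldsymbol{s}$ splits into $\boldsymbol{s} \upharpoonright A, C \in \phi$ and $\boldsymbol{s} \upharpoonright B, D \in \psi$; since the pointers of $\phi \otimes \psi$ are inherited from $\phi$ on $A, C$-moves and from $\psi$ on $B, D$-moves, an initial move of the codomain $C \otimes D$ (the only initial moves of the whole game) is a $C$- or a $D$-move and both the unique question it justifies and the answers justified by it lie within its own component. First I would record that, as $\phi \otimes \psi$ is even-prefix-closed, every even-length prefix $\boldsymbol{t}$ of $\boldsymbol{s}$ again lies in $\phi \otimes \psi$, so $\boldsymbol{t} \upharpoonright A, C \in \phi$ and $\boldsymbol{t} \upharpoonright B, D \in \psi$ are even. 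Then, for an initial $C$-move $q$ of $\boldsymbol{t}$, the question $q$ justifies and the answer-counts attached to $q$ and to that question are literally the same computed in $\boldsymbol{t}$ or in $\boldsymbol{t} \upharpoonright A, C$, since the latter is a j-subsequence preserving pointers; linearity of $\phi$ then supplies the required ``one question, equal answer-counts'' condition, and symmetrically for $D$-moves via $\psi$, whence $\mathsf{L}(\boldsymbol{s})$. For strictness, if $\boldsymbol{s} m n \in \phi \otimes \psi$ with $m$ an initial $C$-move, then $m$ is the last O-move and lies in the $A, C$-component, so Player answers in that component and $(\boldsymbol{s} \upharpoonright A, C) m n \in \phi$; strictness of $\phi$ forces $n \in M_A^{\mathsf{Init}} \subseteq M_{A \otimes B}^{\mathsf{Init}}$, and symmetrically for $D$.

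Pairing and promotion follow the same template. A position of $\langle \phi, \psi \rangle : C \multimap A \mathbin{\&} B$ lies \emph{entirely} in one summand $C \multimap A$ or $C \multimap B$ (as $P_{A \mathbin{\&} B} = P_A + P_B$), so linearity and strictness are inherited verbatim from $\phi$ or $\psi$; and for $\varphi^\dagger : \oc A \multimap \oc B$ the thread decomposition $\boldsymbol{s} \upharpoonright i \in \varphi$ isolates each initial codomain move $(\hat b, i)$ together with the reindexed $\oc A$-moves of its own thread, so the linearity condition for $(\hat b, i)$ in $\boldsymbol{s}$ is exactly that for the root of $\boldsymbol{s} \upharpoonright i$ in $\varphi$, while a first Player response to $(\hat b, i)$ is the $\varphi$-response inside thread $i$, hence an initial $\oc A$-move. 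For $\mathit{der}_A : \oc A \multimap A$ I would argue as in the copy-cat case of Lem.~\ref{LemWellDefinedCopyCats}: even-length P-views have the shape $m_1 m_1 \dots m_k m_k$ confined to the single copy indexed $0$, which makes each initial codomain $A$-move justify exactly one question in $\oc A$ with matching answer-counts (linearity) and forces the first response to such a move to be the corresponding initial move of $\oc A$ (strictness).

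The part I expect to be most delicate is the linearity clause for tensor and promotion, precisely because it is a \emph{global} counting condition (on every even prefix, each initial move justifies one question and the two answer-counts agree) rather than an immediate O/P response condition like strictness. The crux is to verify that neither the unique question justified by an initial codomain move nor the answers justified by it or by that question ever migrate across the component/thread boundary; this is where one must lean on the fact that the defining restrictions of $\phi \otimes \psi$ and $\varphi^\dagger$ are j-subsequences that preserve justification pointers, so the relevant counts are computed identically in $\boldsymbol{s}$ and in its component restrictions. Once that bookkeeping is secured, every clause reduces to the linearity of the components, and the remaining verifications are routine.
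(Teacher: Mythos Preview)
Your proposal is correct and, in fact, considerably more detailed than what the paper itself provides: the paper states this lemma without proof, relying on the citation to \cite{mccusker1998games} in the lemma header (the commented-out precursors in the source carry only ``Straightforward; see \cite{abramsky1994games,abramsky1999game,mccusker1998games,abramsky2000full}'' and, for dereliction, ``Essentially the same as the proof of Lem.~\ref{LemWellDefinedCopyCats}''). Your plan---dispatch the classical properties by citation and isolate the paper-specific notions of linearity and strictness for explicit verification via the component/thread decomposition---is exactly the right division of labour, and the arguments you sketch for each construction are sound; you simply go further than the paper bothers to.
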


\begin{definition}[Category $\mathcal{LG}$]
The category $\mathcal{LG}$ is given by:
\begin{itemize}

\item Objects are wf-games;

\item Morphisms $A \rightarrow B$ are the equivalence classes $[\phi] \stackrel{\mathrm{df. }}{=} \{ \phi' : A \multimap B \mid \phi \simeq_{A \multimap B} \phi' \ \! \}$ of valid, winning, linear strategies $\phi : A \multimap B$;

\item Composition of morphisms $[\phi] : A \rightarrow B$ and $[\psi] : B \rightarrow C$ is given by $[\psi] \circ [\phi] \stackrel{\mathrm{df. }}{=} [\psi \circ \phi] : A \rightarrow C$;

\item Identities are given by $\mathit{id}_A \stackrel{\mathrm{df. }}{=} [\mathit{cp}_A] : A \rightarrow A$.

\end{itemize}
\end{definition}

\begin{theorem}[NSC $\mathcal{LG}$]
\label{ThmLG}
The tuple $\mathcal{LG} = (\mathcal{LG}, \otimes, \top, \multimap, \oc)$ forms a NSC with finite products $(1, \&)$.
\end{theorem}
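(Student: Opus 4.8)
The plan is to exhibit $\mathcal{LG}$ as a lluf subcategory of (the winning-strategy variant of) McCusker's category of wf-games \cite{mccusker1998games}, which is known to carry an NSC-structure, and then to verify that every structural morphism witnessing that structure is \emph{linear}, so that the NSC axioms---being equations and natural-isomorphism conditions between these morphisms---descend to $\mathcal{LG}$. Since linearity is the only genuinely new ingredient, the bulk of the work amounts to checking that it is preserved by each construction, and this is already packaged in the well-definedness lemmas above. First I would confirm that $\mathcal{LG}$ is a category: identities are copy-cats, which are valid, winning and linear on wf-games by Lem.~\ref{LemWellDefinedCopyCats}, and composition of valid, winning, linear strategies is again such and respects $\simeq$ by Lem.~\ref{LemWellDefinedCompositionOfStrategies}, so composition is well-defined on equivalence classes; associativity and the identity laws are the standard facts about composition of strategies, inherited from \cite{mccusker1998games}.

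Next, for the symmetric monoidal structure $(\otimes, \top)$, objects stay wf under $\otimes$ by Lem.~\ref{LemWellDefinedConstructionsOnGames}, and $\otimes$ is a bifunctor preserving linearity by Lem.~\ref{LemWellDefinedConstructionsOnStrategies} together with the usual interchange law. The associator, unitors and symmetry are copy-cat-like retagging strategies; they are linear and winning for the same reason copy-cats are (Lem.~\ref{LemWellDefinedCopyCats}), since each merely copies an O-move to the corresponding P-move in an isomorphic arena, justifying exactly one question per initial move with matching answer-counts. The pentagon, triangle and hexagon coherences are equations between these morphisms valid in the ambient category, hence valid in $\mathcal{LG}$. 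For the closed structure I would use that $A \otimes B \multimap C$ and $A \multimap (B \multimap C)$ have identical position sets up to tags, so currying is a retagging bijection on strategies; it preserves validity, winning and linearity and is natural in each argument, giving $\mathcal{LG}(A \otimes B, C) \cong \mathcal{LG}(A, B \multimap C)$. Note that currying need not preserve \emph{strictness}, but strictness is not part of the NSC-structure, so this is harmless.

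For finite products, $1 = \top$ is terminal because $\top$ has no moves, so the only strategy into it is $\{\boldsymbol{\epsilon}\}$, vacuously winning and linear. The product $A \& B$ is the product of games, with projections given by linear, winning copy-cats on the relevant summand and the mediating morphism given by pairing, which preserves winning and linearity by Lem.~\ref{LemWellDefinedConstructionsOnStrategies}; the universal property with its uniqueness clause is inherited from \cite{mccusker1998games}. For the comonad $\oc$ I would take the counit to be dereliction and the comultiplication to be the promotion of dereliction, with $\oc$ acting on morphisms by promotion; derelictions and promotions are linear and winning by Lem.~\ref{LemWellDefinedConstructionsOnStrategies}, so $\oc$ restricts to an endofunctor of $\mathcal{LG}$ satisfying the comonad laws. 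The Seely isomorphisms $\top \stackrel{\sim}{\to} \oc 1$ (both empty games) and $\oc A \otimes \oc B \stackrel{\sim}{\to} \oc (A \& B)$ are the standard retagging isomorphisms of \cite{mccusker1998games}, and monoidality of the co-Kleisli adjunction is then the coherence of these isos with $(\otimes, \top)$, again inherited from the ambient category.

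The main obstacle is precisely the linearity bookkeeping for the structural isomorphisms---in particular the Seely iso $\oc A \otimes \oc B \cong \oc (A \& B)$ and the currying bijection---where one must verify against Def.~\ref{DefLinearityOfStrategies} that every even-length prefix has each initial question justifying a unique sub-question with balanced answer-counts. In each case the strategy is a faithful copy-cat between corresponding arena components (matching the threads of $\oc (A \& B)$ with the respective copies in $\oc A \otimes \oc B$), so linearity follows by the same induction as in Lem.~\ref{LemWellDefinedCopyCats}. Once this is in hand, all NSC axioms transfer from McCusker's category to $\mathcal{LG}$, since both sides of each axiom are morphisms already shown to lie in $\mathcal{LG}$.
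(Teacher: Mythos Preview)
Your proposal is correct and takes essentially the same approach as the paper: invoke the known NSC-structure on the ambient category of games and winning strategies (the paper cites \cite{hyland1997game}, you cite \cite{mccusker1998games}) and then verify, via Lem.~\ref{LemWellDefinedConstructionsOnGames}, \ref{LemWellDefinedCopyCats}, \ref{LemWellDefinedCompositionOfStrategies} and \ref{LemWellDefinedConstructionsOnStrategies}, that all the structural morphisms are linear (and that wf-ness guarantees noetherianity of copy-cats), so the structure restricts to $\mathcal{LG}$. The paper compresses this into a single sentence; you have simply unpacked the bookkeeping it leaves implicit.
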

\begin{proof}
As outlined in \cite{hyland1997game} and by Lem.~\ref{LemWellDefinedConstructionsOnGames}, \ref{LemWellDefinedCopyCats} and \ref{LemWellDefinedConstructionsOnStrategies} (constructions on strategies are lifted to their equivalence classes, and games are wf for copy-cats to be noetherian). 
\end{proof}

\if0
Moreover, with the \emph{bang lemma} \cite{mccusker1998games}, it is straightforward to verify: 
\begin{corollary}[BwL of $\boldsymbol{\mathcal{LG}}$]
The NSC $\mathcal{LG}$ is a BwLSMCC.
\end{corollary}
\fi

\if0
Finally, let us recall the game-semantic counterpart of \emph{intuitionistic restriction} of proofs:
\begin{definition}[Well-bracketing of strategies \cite{hyland2000full,mccusker1998games}]
A strategy $\sigma : G$ is \emph{\bfseries well-bracketed (wb)} if given $\boldsymbol{s} q \boldsymbol{t} a \in \sigma$, where $\lambda_G^{\mathsf{QA}}(q) = \mathsf{Q}$, $\lambda_G^{\mathsf{QA}}(a) = \mathsf{A}$ and $\mathcal{J}_{\boldsymbol{s}q\boldsymbol{t}a}(a) = q$, each question in $\boldsymbol{t'}$, defined by $\lceil \boldsymbol{s} q \boldsymbol{t} \rceil_G = \lceil \boldsymbol{s} q \rceil_G . \boldsymbol{t'}$, justifies an answer in $\boldsymbol{t'}$.
\end{definition}
\fi

For lack of space, we leave the details of \emph{\bfseries well-bracketing (wb)} of strategies to \cite{hyland2000full,mccusker1998games}.
It is easy to show: 
\begin{corollary}[NSC $\mathcal{LG}^{\mathsf{wb}}$]
The lluf subcategory $\mathcal{LG}^\mathsf{wb}$ of $\mathcal{LG}$, in which for each morphism $[\phi]$ the strategy $\phi$ is wb, forms a subNSC of $\mathcal{LG}$ with finite products $(1, \&)$.
\end{corollary}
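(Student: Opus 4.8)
The plan is to show that well-bracketing (wb) is a property closed under every categorical operation used to build $\mathcal{LG}$, so that $\mathcal{LG}^\mathsf{wb}$ inherits the NSC structure and its axioms verbatim. A preliminary point is \emph{well-definedness}: since the identification $\simeq_G$ only permutes the thread-tags introduced by $\oc$ while preserving the question/answer labeling $\lambda_G^\mathsf{QA}$ and the justification pointers $\mathcal{J}_{\boldsymbol{s}}$, it preserves wb; hence the property ``$[\phi]$ is wb'' is independent of the chosen representative $\phi$, and $\mathcal{LG}^\mathsf{wb}$ is a well-defined class of morphisms on the same objects (wf-games) as $\mathcal{LG}$, i.e.\ it is lluf.

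First I would check that $\mathcal{LG}^\mathsf{wb}$ is a genuine subcategory. Copy-cats $\mathit{cp}_A$ are wb because every answer they produce is justified by the question copied across from the dual component, so no pending question is ever answered out of turn; thus each identity $\mathit{id}_A = [\mathit{cp}_A]$ lies in $\mathcal{LG}^\mathsf{wb}$. Closure under composition is the essential fact: wb is preserved by composition of innocent strategies, established in \cite{hyland2000full,mccusker1998games}. Combined with Lem.~\ref{LemWellDefinedCompositionOfStrategies} (which already gives closure under composition for valid, winning, linear strategies), this shows $[\psi]\circ[\phi]$ is again a valid, winning, linear, wb morphism.

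Next I would verify that every component of the NSC structure of $\mathcal{LG}$ is realized by a wb strategy and that the strategy constructions preserve wb. The associators, unitors, symmetry and the evaluation map are all copy-cat-like and therefore wb; currying and uncurrying leave the underlying set of plays unchanged (only the typing across $\multimap$ is reparenthesized), hence preserve wb; the projections, pairing $\langle \_, \_ \rangle$ and copairing $[\_, \_]$ merely select and run component strategies without introducing new question/answer links, and the unique map to the terminal object $\top$ is the empty strategy, so finite products $(1, \&)$ restrict to $\mathcal{LG}^\mathsf{wb}$; dereliction $\mathit{der}_A$ and comultiplication $\delta_A$ are copy-cat-like and wb, and promotion $(\_)^\dagger$ replays a wb strategy thread-wise and so stays wb; finally the Seely isomorphisms $\top \cong \oc 1$ and $\oc A \otimes \oc B \cong \oc(A \& B)$ are retaggings realized by copy-cats, hence wb.

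With the structural morphisms and operations all living inside $\mathcal{LG}^\mathsf{wb}$, the NSC axioms --- the SMCC coherence and adjunction $\mathcal{C}(A \otimes B, C) \cong \mathcal{C}(A, B \multimap C)$, the comonad laws, the relevant naturality squares and the Seely conditions --- are equations between morphisms that already hold in $\mathcal{LG}$ by Thm.~\ref{ThmLG}; since $\mathcal{LG}^\mathsf{wb}$ shares these same objects and morphisms, the equations hold a fortiori. The monoidality of the canonical adjunction between $\mathcal{C}$ and $\mathcal{C}_\oc$ restricts because its unit and counit are built from $\mathit{der}$ and promotion, both wb. Hence $\mathcal{LG}^\mathsf{wb}$ is a subNSC of $\mathcal{LG}$ with finite products $(1, \&)$. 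The only genuinely nontrivial step is the compositionality of wb, which is the main obstacle in principle but is supplied by \cite{hyland2000full,mccusker1998games}; everything else reduces to observing that the canonical strategies are copy-cat-like and that no construction ever creates a question/answer pairing that could break bracketing.
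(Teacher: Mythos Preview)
Your proposal is correct and is exactly the kind of routine verification the paper has in mind: the paper gives no proof of this corollary beyond the remark ``It is easy to show'' after deferring the definition and properties of well-bracketing to \cite{hyland2000full,mccusker1998games}. Your explicit check that identities, composition, the SMCC structural morphisms, the product data $(1,\&)$, and the comonad data $(\oc,\mathit{der},\delta,(\_)^\dagger)$ together with the Seely isomorphisms are all wb, and that the NSC equations then restrict from $\mathcal{LG}$, is precisely what ``easy to show'' is meant to abbreviate.
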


\subsection{Game-Semantic BiLSMCC}
Now, let us define a FwL-structure $(\invamp, \bot, \wn)$ on:
\begin{definition}[Subcategory $\sharp\mathcal{LG}$]
The lluf subcategory $\sharp\mathcal{LG}$ of $\mathcal{LG}$ has exactly morphisms $[\phi]$ in $\mathcal{LG}$ such that $\phi$ is strict. 
\end{definition}

Clearly, $\sharp\mathcal{LG}$ is not closed, but Thm.~\ref{ThmLG} immediately gives:
\begin{lemma}[BwLSMC $\sharp \mathcal{LG}$]
\label{LemSharpLG}
The category $\sharp\mathcal{LG}$ together with the triple $(\otimes, \top, \oc)$ inherited from $\mathcal{LG}$ is a BwLSMC with finite coproducts $(0, \oplus)$ (n.b., they are weak in $\mathcal{LG}$ as in \cite{mccusker1998games}).
\end{lemma}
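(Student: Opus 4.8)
The plan is to leverage Thm.~\ref{ThmLG}, which already gives that $\mathcal{LG} = (\mathcal{LG}, \otimes, \top, \multimap, \oc)$ is a NSC with finite products $(1, \&)$ and (weak) finite coproducts $(0, \oplus)$: since $\sharp\mathcal{LG}$ is a \emph{lluf} subcategory, essentially all the data are inherited, and the real work is only to check that the relevant structure restricts to strict morphisms and, crucially, that the weak coproducts of $\mathcal{LG}$ become \emph{genuine} in $\sharp\mathcal{LG}$. First I would verify that $\sharp\mathcal{LG}$ is a well-defined lluf subcategory: identities are the copy-cats $\mathit{cp}_A$, which are strict by Lem.~\ref{LemWellDefinedCopyCats}, and composition preserves strictness by Lem.~\ref{LemWellDefinedCompositionOfStrategies}; hence $\sharp\mathcal{LG}$ is closed under identities and composition.

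Next I would check that the BwL-structure and the finite products descend, by noting that every structural morphism is witnessed by a strict (copy-cat-like) strategy and that the relevant operations preserve strictness. Tensor of strategies preserves strictness with unit $\top$ (Lem.~\ref{LemWellDefinedConstructionsOnStrategies}), and the associativity, unit and symmetry isomorphisms of $\otimes$, being copy-cats, are strict, so $(\sharp\mathcal{LG}, \otimes, \top)$ is a SMC. For the comonad, the counit is the dereliction $\mathit{der}_A$, which is strict (Lem.~\ref{LemWellDefinedConstructionsOnStrategies}); the comultiplication, the monoidality data of the co-Kleisli adjunction, and the Seely isomorphisms $\top \cong \oc 1$ and $\oc A \otimes \oc B \cong \oc (A \& B)$ are again copy-cat-like, hence strict, and since promotion preserves strictness (Lem.~\ref{LemWellDefinedConstructionsOnStrategies}), $\oc$ restricts to an endofunctor of $\sharp\mathcal{LG}$ so that the whole comonad descends. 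For the finite products $(1, \&)$: $1 = \top$ is terminal since the terminal map is the empty strategy, whose strictness condition is vacuous ($\top$ has no moves); the projections are strict; and pairing preserves strictness (Lem.~\ref{LemWellDefinedConstructionsOnStrategies}). Here uniqueness is free: because $\mathcal{LG}$ already has \emph{genuine} products, uniqueness of the mediating morphism in the larger category descends to the subcategory, so only strictness of that morphism must be checked. Together these give that $\sharp\mathcal{LG}$ is a BwLSMC.

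The crux, and the step I expect to be the main obstacle, is upgrading the \emph{weak} coproducts of $\mathcal{LG}$ (Def.~\ref{DefSum}, weak as in \cite{mccusker1998games}) to \emph{genuine} coproducts in $\sharp\mathcal{LG}$: unlike for products, uniqueness is \emph{not} inherited, and must be recovered from strictness. The injections $\iota_A, \iota_B$ and the copairing $[f,g]$ are strict copy-cat-like strategies, so existence of mediating morphisms is inherited; the new content is uniqueness. For the initial object $0 = \bot$, a strict morphism $\bot \to A$ is a strategy on $\bot \multimap A$, and strictness forces it to answer each opening O-move (an initial move of $A$) by the unique move of $\bot$, after which no O-move remains, so the strategy is completely determined and $\bot$ is genuinely initial. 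For binary coproducts, given $f : A \to C$, $g : B \to C$ and any strict $h : A \oplus B \to C$ with $h \circ \iota_A = f$ and $h \circ \iota_B = g$, strictness forces $h$ to answer the opening O-move $\hat c \in M_C^{\mathsf{Init}}$ by an initial move of $A \oplus B$, i.e.\ a pair $(\hat a, \hat b) \in M_A^{\mathsf{Init}} \times M_B^{\mathsf{Init}}$; composing with $\iota_A$ (resp.\ $\iota_B$) then pins $\hat a$ together with the whole $A$-branch behaviour of $h$ to $f$ (resp.\ $\hat b$ and the $B$-branch to $g$), so $h$ is completely determined and $h = [f,g]$. This is exactly where non-strict strategies fail—a non-strict $h$ could respond inside $C$ or commit to a single branch without playing the pair, breaking uniqueness—which explains why the coproducts are only weak in $\mathcal{LG}$ yet become genuine in $\sharp\mathcal{LG}$.
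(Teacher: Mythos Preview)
Your proposal is correct and follows the same route as the paper, which offers no proof beyond the remark that Thm.~\ref{ThmLG} ``immediately gives'' the lemma. Your detailed verification---in particular the argument that strictness forces uniqueness of the mediating morphisms out of $0$ and $A \oplus B$, thereby upgrading the merely weak coproducts of $\mathcal{LG}$ to genuine ones in $\sharp\mathcal{LG}$---spells out the one nontrivial point the paper leaves implicit.
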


\begin{definition}[Par on games]
\label{DefParOnGames}
The \emph{\bfseries par of games} $A$ and $B$ is the game $A \invamp B$ defined by:
\begin{itemize}

\item $M_{A \invamp B} \stackrel{\mathrm{df. }}{=} (M_A^{\mathsf{Init}} \times M_B^{\mathsf{Init}}) + M_A + M_B$;

\item $\lambda_{A \invamp B} : (\hat{a}, \hat{b}) \in M_A^{\mathsf{Init}} \times M_B^{\mathsf{Init}} \mapsto \mathsf{OQ}, a \in M_A \mapsto \lambda_A(a), b \in M_B \mapsto \lambda_B(b)$;

\item $\vdash_{A \invamp B} \stackrel{\mathrm{df. }}{=} \{ (\star, (\hat{a}, \hat{b})) \mid \star \vdash_A \hat{a}, \star \vdash_B \hat{b} \ \! \} + \vdash_A + \vdash_B \\ + \{ ((\hat{a}, \hat{b}), a) \in (M_A^{\mathsf{Init}} \times M_B^{\mathsf{Init}}) \times M_A \mid \hat{a} \vdash_A a \ \! \} \\ + \{ ((\hat{a}, \hat{b}), b) \in (M_A^{\mathsf{Init}} \times M_B^{\mathsf{Init}}) \times M_B \mid \hat{b} \vdash_B b \ \! \}$;

\item $P_{A \invamp B} \stackrel{\mathrm{df. }}{=} \{ \boldsymbol{s} \in \mathscr{L}_{A \invamp B} \mid \boldsymbol{s} \upharpoonright A \in P_A, \boldsymbol{s} \upharpoonright B \in P_B, \boldsymbol{s} = x \boldsymbol{t} \Rightarrow x \in M_A^{\mathsf{Init}} \times M_B^{\mathsf{Init}} \}$, where $\boldsymbol{s} \upharpoonright A$ (resp. $\boldsymbol{s} \upharpoonright B$) is the j-subsequence of $\boldsymbol{s}$ that consists of moves $(\hat{a}, \hat{b}) \in M_A^{\mathsf{Init}} \times M_B^{\mathsf{Init}}$ and $a \in M_A$ (resp. $b \in M_B$) with the former changed into $\hat{a}$ (resp. $\hat{b}$);

\item $\boldsymbol{s} \simeq_{A \invamp B} \boldsymbol{t} \stackrel{\mathrm{df. }}{\Leftrightarrow} \mathit{att}_{A \invamp B}^\ast(\boldsymbol{s}) = \mathit{att}_{A \invamp B}^\ast(\boldsymbol{t}) \wedge \boldsymbol{s} \upharpoonright A \simeq_A \boldsymbol{t} \upharpoonright A \wedge \boldsymbol{s} \upharpoonright B \simeq_B \boldsymbol{t} \upharpoonright B$, where $ \mathit{att}_{A \invamp B}$ is the function $M_{A \invamp B} \rightarrow \{ 0, 1, 2 \}$ that maps $(\hat{a}, \hat{b}) \in M_A^{\mathsf{Init}} \times M_B^{\mathsf{Init}} \mapsto 0,  a \in M_A \mapsto 1, b \in M_B \mapsto 2$.

\end{itemize}
\end{definition}

Dually to tensor $\otimes$, a position of $A \invamp B$ is an interleaving mixture of positions of $A$ and $B$ in which only Player may switch the $AB$-parity again by alternation.
Also, similarly to sum $\oplus$, only the first element of each position of $A \invamp B$ can be of the form $(\hat{a}, \hat{b}) \in M_A^{\mathsf{Init}} \times M_B^{\mathsf{Init}}$.
Note also that our par on games slightly generalizes that on \emph{wo}-games given in \cite{laurent2002polarized}. 

For instance, typical plays of $A \invamp B$ are as follows: 
\begin{center}
\begin{tabular}{ccc}
$A \invamp B$ && $A \invamp B$ \\ \cline{1-1} \cline{3-3}
\tikzmark{ParC21} $(\hat{a}, \hat{b})$ \tikzmark{ParC23} && \tikzmark{ParC25} $(\hat{a}, \hat{b})$ \tikzmark{ParC27} \\
\tikzmark{ParD21} $a_2$ \tikzmark{ParC22} && \tikzmark{ParD25} $b_2$ \tikzmark{ParC26} \\
$a_3$ \tikzmark{ParD22} && $b_3$ \tikzmark{ParD26} \\
$b_2$ \tikzmark{ParD23} && \tikzmark{ParC28} $a_2$ \tikzmark{ParD27} \\
\tikzmark{ParC24} $\hat{b}'$ && \tikzmark{ParD28} $a_3$ \tikzmark{ParC29} \\
\tikzmark{ParD24} $b'_2$ && $a_4$ \tikzmark{ParD29}
\end{tabular}
\begin{tikzpicture}[overlay, remember picture, yshift=.25\baselineskip]
\draw [->] ({pic cs:ParD21}) [bend left] to ({pic cs:ParC21});
\draw [->] ({pic cs:ParD22}) [bend right] to ({pic cs:ParC22});
\draw [->] ({pic cs:ParD23}) [bend right] to ({pic cs:ParC23});
\draw [->] ({pic cs:ParD24}) [bend left] to ({pic cs:ParC24});
\draw [->] ({pic cs:ParD25}) [bend left] to ({pic cs:ParC25});
\draw [->] ({pic cs:ParD26}) [bend right] to ({pic cs:ParC26});
\draw [->] ({pic cs:ParD27}) [bend right] to ({pic cs:ParC27});
\draw [->] ({pic cs:ParD28}) [bend left] to ({pic cs:ParC28});
\draw [->] ({pic cs:ParD29}) [bend right] to ({pic cs:ParC29});
\end{tikzpicture}
\end{center}
where $\hat{a} a_2 a_3 a_4 \in P_A$, $\hat{b} b_2 \hat{b}' b'_2, b_1 b_2 b_3 \in P_B$, and the arrows represent the justification relation in the positions.

\begin{definition}[Par on strategies]
\label{DefParOnStrategies}
Given games $A$, $B$, $C$ and $D$, the \emph{\bfseries par of strategies} $\phi : A \multimap C$ and $\psi : B \multimap D$ is the subset $\phi \invamp \psi \subseteq P_{A \invamp B \multimap C \invamp D}^{\mathsf{Even}}$ defined by: 
\begin{equation*}
\phi \invamp \psi \stackrel{\mathrm{df. }}{=} \{ \boldsymbol{s} \in P_{A \invamp B \multimap C \invamp D}^{\mathsf{Even}} \mid  \boldsymbol{s} \upharpoonright A, C \in \phi, \boldsymbol{s} \upharpoonright B, D \in \psi \ \! \}
\end{equation*}
where $\boldsymbol{s} \upharpoonright A, C$ (resp. $\boldsymbol{s} \upharpoonright B, D$) is the j-subsequence of $\boldsymbol{s}$ that consists of moves $(\hat{a}, \hat{b}) \in M_A^{\mathsf{Init}} \times M_B^{\mathsf{Init}}$, $(\hat{c}, \hat{d}) \in M_C^{\mathsf{Init}} \times M_D^{\mathsf{Init}}$, $a \in M_A$ and $c \in M_C$ (resp. $b \in M_B$ and $d \in M_D$) with the first two changed into $\hat{a}$ and $\hat{c}$ (resp. $\hat{b}$ and $\hat{d}$), respectively.
\end{definition}

$\phi \invamp \psi$ may not satisfy the axiom S2 (Def.~\ref{DefStrategies}) unless $\phi$ and $\psi$ are both strict. 
If $\phi$ and $\psi$ are strict, $\phi \invamp \psi$ plays, e.g., as:
\begin{center}
\begin{tabular}{ccccccc}
$A \invamp B$ & $\stackrel{\phi \invamp \psi}{\multimap}$ & $C \invamp D$ && $A \invamp B$ & $\stackrel{\phi \invamp \psi}{\multimap}$ & $C \invamp D$ \\ \cline{1-3} \cline{5-7}
&&\tikzmark{ParC1} $(\hat{c}, \hat{d})$ \tikzmark{ParC3} &&&& \tikzmark{ParC5} $(\hat{c}, \hat{d})$ \tikzmark{ParC9} \\
\tikzmark{ParC2} $(\hat{a}, \hat{b})$ \tikzmark{ParD1}&& && \tikzmark{ParC6} $(\hat{a}, \hat{b})$ \tikzmark{ParD5} \\
\tikzmark{ParD2} $a_2$&& && \tikzmark{ParD6} $b_2$ \\
&& $c_2$ \tikzmark{ParD3} && \tikzmark{ParC8} $\hat{b}'$ \tikzmark{ParD7} \\
&& $\hat{c}'$ \tikzmark{ParC4} && \tikzmark{ParD8} $b'_2$ \\
&& $c'_2$ \tikzmark{ParD4} &&&& $d_2$ \tikzmark{ParD9}
\end{tabular}
\begin{tikzpicture}[overlay, remember picture, yshift=.25\baselineskip]
\draw [->] ({pic cs:ParD1}) to ({pic cs:ParC1});
\draw [->] ({pic cs:ParD2}) [bend left] to ({pic cs:ParC2});
\draw [->] ({pic cs:ParD3}) [bend right] to ({pic cs:ParC3});
\draw [->] ({pic cs:ParD4}) [bend right] to ({pic cs:ParC4});
\draw [->] ({pic cs:ParD5}) to ({pic cs:ParC5});
\draw [->] ({pic cs:ParD6}) [bend left] to ({pic cs:ParC6});
\draw [->] ({pic cs:ParD7}) to ({pic cs:ParC5});
\draw [->] ({pic cs:ParD8}) [bend left] to ({pic cs:ParC8});
\draw [->] ({pic cs:ParD9}) [bend right] to ({pic cs:ParC9});
\end{tikzpicture}
\end{center}
where $\hat{c} \hat{a} a_2 c_2 \hat{c}' c'_2 \in \phi$ and $\hat{d} \hat{b} b_2 \hat{b}' b'_2 d_2 \in \psi$.
Hence, $\invamp$ cannot be a bifunctor on $\mathcal{LG}$, but it can be on $\sharp\mathcal{LG}$:
\begin{definition}[Functor par]
The \emph{\bfseries functor par} is the bifunctor $\invamp$ on $\sharp\mathcal{LG}$ that maps objects $(A, B) \in \sharp\mathcal{LG} \times \sharp\mathcal{LG}$ to $A \invamp B \in \sharp\mathcal{LG}$, and morphisms $([\phi], [\psi]) \in \sharp\mathcal{LG} \times \sharp\mathcal{LG}((A, C), (B, D))$ to $[\phi \invamp \psi] \in \sharp\mathcal{LG}(A \invamp C, B \invamp D)$.
\end{definition}

\begin{lemma}[Well-defined par]
\label{LemWellDefinedPar}
The functor par $\invamp$ is indeed a well-defined bifunctor on $\sharp\mathcal{LG}$.
\end{lemma}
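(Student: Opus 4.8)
The plan is to reduce as much as possible to the already-established development of tensor, exploiting the fact that $\invamp$ is defined exactly dually to $\otimes$, the sole genuine novelty being the shared initial-move pairs $M_A^{\mathsf{Init}} \times M_B^{\mathsf{Init}}$ that $A \invamp B$ inherits from the definition of sum (compare Def.~\ref{DefParOnGames} with Def.~\ref{DefSum} and Def.~\ref{DefTensorOfGames}). First I would confirm that $A \invamp B$ is a well-defined wf-game whenever $A$ and $B$ are: its arena coincides with that of $A \oplus B$, which is covered by Lem.~\ref{LemWellDefinedConstructionsOnGames}, so it remains only to check that $P_{A \invamp B}$ is non-empty and prefix-closed and that $\simeq_{A \invamp B}$ satisfies I1--I3, which go through exactly as for $\otimes$ using the projections $\boldsymbol{s} \upharpoonright A$, $\boldsymbol{s} \upharpoonright B$ and the attribute map $\mathit{att}_{A \invamp B}$.

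The heart of the argument is that $\phi \invamp \psi$ is a valid, winning, linear, strict strategy on $A \invamp B \multimap C \invamp D$ whenever $\phi : A \multimap C$ and $\psi : B \multimap D$ are. Even-prefix-closure (S1) passes componentwise from S1 of $\phi$ and $\psi$. The crucial point, and the one place where strictness is indispensable (as flagged after Def.~\ref{DefParOnStrategies}), is determinism (S2). The only situation in which Player would a priori have to respond simultaneously in both the $A,C$- and the $B,D$-threads is immediately after Opponent's opening move, which in $A \invamp B \multimap C \invamp D$ is the initial pair $(\hat c, \hat d)$ of the codomain, projecting to $\hat c$ for $\phi$ and to $\hat d$ for $\psi$. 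Since $\phi$ and $\psi$ are strict, their responses to these initial O-moves are themselves initial, say $\hat a \in M_A^{\mathsf{Init}}$ and $\hat b \in M_B^{\mathsf{Init}}$, and these bundle into the single par-move $(\hat a, \hat b) \in M_A^{\mathsf{Init}} \times M_B^{\mathsf{Init}}$, whose two projections are exactly $\hat a$ and $\hat b$; so the two demands are satisfied by one move and no ambiguity arises. After this opening every non-initial O-move lies in exactly one thread, so Player's reply is dictated by the corresponding strategy alone, giving S2. The same opening analysis reads off strictness of $\phi \invamp \psi$ (its first P-move $(\hat a, \hat b)$ is initial in $A \invamp B$), so $\phi \invamp \psi$ is genuinely a morphism of $\sharp\mathcal{LG}$; validity, innocence, totality, noetherianity and linearity then transfer from $\phi$ and $\psi$ by the componentwise projection arguments dual to the tensor case in Lem.~\ref{LemWellDefinedConstructionsOnStrategies}.

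Finally I would establish the two bifunctoriality equations modulo $\simeq$. Preservation of $\simeq$ (if $\phi \simeq \phi'$ and $\psi \simeq \psi'$ then $\phi \invamp \psi \simeq \phi' \invamp \psi'$) is immediate from the definition of $\simeq_{A \invamp B \multimap C \invamp D}$ via projections and $\mathit{att}$, dually to the tensor clause of Lem.~\ref{LemWellDefinedConstructionsOnStrategies}. For identities I would show $\mathit{cp}_A \invamp \mathit{cp}_B \simeq \mathit{cp}_{A \invamp B}$ by verifying that both copy the opening pair $(\hat c, \hat d) \mapsto (\hat a, \hat b)$ and thereafter copy componentwise. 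For composition I would prove the interchange law $(\phi_1 ; \phi_2) \invamp (\psi_1 ; \psi_2) \simeq (\phi_1 \invamp \psi_1) ; (\phi_2 \invamp \psi_2)$ by comparing the two parallel compositions restricted to the $A,C$- and $A',C'$-threads: strictness forces the one place where the hidden moves on the middle game $B \invamp B'$ touch the shared initial pair, namely the opening $(\hat b, \hat b')$, to be produced and consumed as a single bundled unit whose projections are precisely the initial $B$- and $B'$-moves the two threads require, so the threads compose independently and the restriction agrees with $(\phi_1 ; \phi_2) \invamp (\psi_1 ; \psi_2)$.

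The main obstacle I anticipate is exactly this determinism/strictness interaction: making fully rigorous both the claim that strictness confines every simultaneous-response obligation to the single opening pair in the definition of $\phi \invamp \psi$, and the parallel claim in the interchange law that the shared initial pair of the middle game is always exchanged as an indivisible bundle, so that the hidden communication never de-synchronizes the two threads. Everything else is a routine dualization of the tensor development of \cite{mccusker1998games} and Lem.~\ref{LemWellDefinedConstructionsOnStrategies}.
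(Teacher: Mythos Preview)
Your proposal is broadly sound and covers all the right pieces, but you misidentify which step carries the real weight. You spend your effort on determinism (S2) and the strictness interaction at the opening pair; this is correct and important, but once strictness is assumed it is essentially a one-line observation (and the paper in fact dispatches it in the sentence before the lemma). The paper's proof instead singles out \emph{preservation of innocence} as the non-routine point, and here your ``transfer by componentwise projection arguments dual to the tensor case'' is too quick.

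The issue is this: in $A\otimes B\multimap C\otimes D$ only Opponent switches threads in the codomain, so P-views trivially project to one component. In $A\invamp B\multimap C\invamp D$ it is \emph{Player} who may switch the $CD$-parity in the codomain, so a priori a P-view could mix the two threads and fail to project to a position of either $\phi$ or $\psi$. The paper's argument is the specific observation that (i) every O-move in the codomain $C\invamp D$ must stay within its current $CD$-component (Opponent cannot switch in a par), and (ii) the domain $A\invamp B$ contribution to any P-view lies entirely in $A$ or entirely in $B$; together these force the P-view of any $\boldsymbol{s}\in\phi\invamp\psi$ to coincide with the P-view of $\boldsymbol{s}\upharpoonright A,C$ or of $\boldsymbol{s}\upharpoonright B,D$, whence innocence of $\phi\invamp\psi$ follows from innocence of $\phi$ and $\psi$. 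This is not a dualization of the tensor argument but a genuinely different use of the $O/P$ asymmetry in par, and your proposal should make it explicit rather than folding it into ``routine''.
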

\begin{proof}
First, $\invamp$ on objects is clearly well-defined.
Next, $\invamp$ on strict strategies is well-defined, and it clearly preserves linearity, totality, noetherianity and identification of strategies. 

\if0
Importantly, innocence of strict strategies is \emph{not} preserved under $\invamp$; however, innocence of \emph{linear}, strict strategies $\phi : A \multimap C$ and $\psi : B \multimap D$ \emph{is} preserved since $\phi \invamp \psi$ plays as:
\begin{center}
\begin{tabular}{cccccccc}
$A \invamp B$ & $\stackrel{\phi \invamp \psi}{\multimap}$ & $C \invamp D$ && $A \invamp B$ & $\stackrel{\phi \invamp \psi}{\multimap}$ & $C \invamp D$ \\ \cline{1-3} \cline{5-7}
& $\vdots$ & && & $\vdots$ & \\
&& \tikzmark{ParC31} $(c_1, d_1)$ \tikzmark{ParC33} && & & \tikzmark{ParC34} $y_1$ \tikzmark{ParC36} \\
\tikzmark{ParC32} $(a_1, b_1)$ \tikzmark{ParD31} && && \tikzmark{ParC35} $x_1$ \tikzmark{ParD34} \\
$\vdots$ && && $\vdots$ \\
\tikzmark{ParD32} $x_2$ && && \tikzmark{ParD35} $x_2$ \\
&& $y_2$ \tikzmark{ParD33} && && $y_2$ \tikzmark{ParD36}
\end{tabular}
\begin{tikzpicture}[overlay, remember picture, yshift=.25\baselineskip]
\draw [->] ({pic cs:ParD31}) to ({pic cs:ParC31});
\draw [->] ({pic cs:ParD32}) [bend left] to ({pic cs:ParC32});
\draw [->] ({pic cs:ParD33}) [bend right] to ({pic cs:ParC33});
\draw [->] ({pic cs:ParD34}) to ({pic cs:ParC34});
\draw [->] ({pic cs:ParD35}) [bend left] to ({pic cs:ParC35});
\draw [->] ({pic cs:ParD36}) [bend right] to ({pic cs:ParC36});
\end{tikzpicture}
\end{center}
where $a_1, a_2 \in M_A$, $b_1, b_2 \in M_B$, $c_1, c_2 \in M_C$, $d_1, d_2 \in M_D$, $a_1$, $b_1$, $c_1$ and $d_1$ are questions, $a_2$, $b_2$, $c_2$ and $d_2$ are answers, and $c_1$ and $d_1$ are initial.
Note that the $CD$-parity change on the codomain $C \invamp D$ is not possible for moves between $c_1$ and $c_2$, or between $d_1$ and $d_2$; thus, the P-view of a position $\boldsymbol{s} \in \phi \invamp \psi$ contains the P-view of $\boldsymbol{s} \upharpoonright A, C \in \phi$ for the LHS-case, and the P-view of $\boldsymbol{s} \upharpoonright B, D \in \psi$ for the RHS-case.
Thus, $\phi \invamp \psi$ is innocent if $\phi$ and $\psi$ are both linear and innocent.
\fi

For preservation of innocence, let $\phi : A \multimap B$ and $\psi : B \multimap D$ be innocent. 
Note that, during a play of the game $A \invamp B \multimap C \invamp D$, each O-move occurring in the codomain $C \invamp D$ cannot change the $CD$-parity, while the domain $A \invamp B$ part of each P-view must be that of $A$ or $B$.
Hence, the P-view of each element $\boldsymbol{s} \in \phi \invamp \psi$ is either the P-view of $\boldsymbol{s} \upharpoonright A, C \in \phi$ or $\boldsymbol{s} \upharpoonright B, D \in \psi$, whence $\phi \invamp \psi$ is innocent. 

Finally, $\invamp$ clearly preserves composition and identities. 
\end{proof}

\begin{definition}[Why not on games]
\label{DefWhyNotOnGames}
The \emph{\bfseries why not of a game} $A$ is the game $\wn A$ defined by:
\begin{itemize}
\item $M_{\wn A} \stackrel{\mathrm{df. }}{=} M_A^{\mathsf{Init}, \mathbb{N}} + (M_A \times \mathbb{N})$, where $M_A^{\mathsf{Init}, \mathbb{N}}$ is the set of all functions $\mathbb{N} \rightarrow M_A^{\mathsf{Init}}$; 

\item $\lambda_{\wn A} : \alpha \in M_A^{\mathsf{Init}, \mathbb{N}} \mapsto \mathsf{OQ}, (a, i) \in M_A \times \mathbb{N} \mapsto \lambda_A(a)$;

\item $\vdash_{\wn A} \stackrel{\mathrm{df. }}{=} (\{ \star \} \times M_A^{\mathsf{Init}, \mathbb{N}}) + (\{ \star \} \times (M_A^{\mathsf{Init}} \times \mathbb{N}))  \\ + \{ (\alpha, (a, i)) \in M_A^{\mathsf{Init}, \mathbb{N}} \times (M_A^{\mathsf{nInit}} \times \mathbb{N}) \mid \alpha(i) \vdash_A a \ \! \} \\ + \{ ((a, i), (a', i)) \in (M_A^{\mathsf{nInit}} \times \mathbb{N})^2 \mid a \vdash_A a' \ \! \}$; 

\item $P_{\wn A} \stackrel{\mathrm{df. }}{=} \{ \boldsymbol{s} \in \mathscr{L}_{\wn A} \mid \forall i \in \mathbb{N} . \ \! \boldsymbol{s} \upharpoonright i \in P_A, \boldsymbol{s} = x \boldsymbol{t} \Rightarrow x \in M_A^{\mathsf{Init}, \mathbb{N}} \}$, where $\boldsymbol{s} \upharpoonright i$ is the j-subsequence of $\boldsymbol{s}$ that consists of moves $\alpha \in M_A^{\mathsf{Init}, \mathbb{N}}$ and $(a, i) \in M_A \times \mathbb{N}$ yet changed into $\alpha(i)$ and $a$, respectively; 

\item $\boldsymbol{s} \simeq_{\wn A} \boldsymbol{t} \stackrel{\mathrm{df. }}{\Leftrightarrow} \exists \varphi \in \mathcal{P}(\mathbb{N}) . \ \! \forall i \in \mathbb{N} . \ \! \boldsymbol{s} \upharpoonright i \simeq_A \boldsymbol{t} \upharpoonright \varphi(i) \wedge (\varphi \circ \mathit{att}_{\wn A})^\ast(\boldsymbol{s}) = \mathit{att}_{\wn A}^\ast(\boldsymbol{t})$, where the function $\mathit{att}_{\wn A} : M_{\wn A} \rightarrow \{ \star \} + \mathbb{N}$ is given by $\alpha \mapsto \star$ and $(a, i) \mapsto i$. 
\end{itemize}
\end{definition}

\begin{definition}[Why not on strategies]
\label{DefWhyNotOnStrategies}
The \emph{\bfseries why not of a strategy} $\phi : A \multimap B$ is the subset $\wn\phi \subseteq P_{\wn A \multimap \wn B}^{\mathsf{Even}}$ given by:
\begin{equation*}
\wn \phi \stackrel{\mathrm{df. }}{=} \{ \boldsymbol{s} \in P_{\wn A \multimap \wn B}^{\mathsf{Even}} \mid \forall i \in \mathbb{N} . \ \! \boldsymbol{s} \upharpoonright i \in \phi \ \! \}
\end{equation*}
where $\boldsymbol{s} \upharpoonright i$ is the obvious analogue of that given in Def.~\ref{DefWhyNotOnGames}.
\end{definition}

Why not is essentially the infinite iteration of par, i.e., $\wn A \cong A \invamp A \invamp A \dots$ and $\wn \phi \cong \phi \invamp \phi \invamp \phi \dots$
A similar construction was introduced independently in \cite{harmer2007categorical} for a different purpose.
As outlined in the paper, we may lift $\wn$ to a monad on $\sharp\mathcal{LG}$:

\begin{definition}[Why not monad]
Given $A \in \sharp \mathcal{LG}$, strategies $\mathit{wst}_A : A \multimap \wn A$ and $\mathit{abs}_A : \wn \wn A \multimap \wn A$, called the \emph{\bfseries waste} and the \emph{\bfseries absorption} on $A$, respectively, are defined by:
\begin{align*}
\mathit{wst}_A &\stackrel{\mathrm{df. }}{=} \{ \boldsymbol{s} \in P_{A \multimap \wn A}^{\mathsf{Even}} \mid \forall \boldsymbol{t} \preceq \boldsymbol{s} . \ \! \mathsf{Even}(\boldsymbol{t}) \Rightarrow \boldsymbol{t} \upharpoonright A = \boldsymbol{t} \upharpoonright \wn A \upharpoonright 0 \ \! \} \\
\mathit{abs}_A &\stackrel{\mathrm{df. }}{=} \{ \boldsymbol{s} \in P_{\wn \wn A \multimap \wn A}^{\mathsf{Even}} \mid \forall \boldsymbol{t} \preceq \boldsymbol{s} . \ \! \mathsf{Even}(\boldsymbol{t}) \\ &\Rightarrow \forall i, j \in \mathbb{N} . \ \! \boldsymbol{t} \upharpoonright \wn \wn A \upharpoonright i \upharpoonright j = \boldsymbol{t} \upharpoonright \wn A \upharpoonright \langle i, j \rangle \ \! \}
\end{align*}
where $\langle \_, \_ \rangle$ is any fixed bijection $\mathbb{N} \times \mathbb{N} \stackrel{\sim}{\rightarrow} \mathbb{N}$.
The \emph{\bfseries why not monad} is the monad $\wn = (\wn, \eta, \mu)$ on $\sharp\mathcal{LG}$, where:
\begin{itemize}

\item The functor $\wn$ is given by $A \in \sharp \mathcal{LG} \mapsto \wn A \in \sharp \mathcal{LG}$, and $[\phi] \in \sharp \mathcal{LG}(A, B) \mapsto [\wn \phi] \in \sharp \mathcal{LG}(\wn A, \wn B)$;

\item The components of the natural transformations $\eta : \mathit{id}_{\sharp\mathcal{LG}} \Rightarrow \wn $ and $\mu : \wn \wn \Rightarrow \wn$ on each $A \in \sharp \mathcal{LG}$ are given by $\eta_A \stackrel{\mathrm{df. }}{=} [\mathit{wst}_A]$ and $\mu_A \stackrel{\mathrm{df. }}{=} [\mathit{abs}_A]$, respectively.

\end{itemize}
\end{definition}


\begin{lemma}[Well-defined why not]
\label{LemWellDefinedWhyNot}
The why not monad is a well-defined monad on $\sharp\mathcal{LG}$.
\end{lemma}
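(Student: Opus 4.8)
The plan is to exploit the observation recorded just above the statement, namely $\wn A \cong A \invamp A \invamp A \dots$ and $\wn\phi \cong \phi \invamp \phi \invamp \phi \dots$, so that well-definedness of $\wn$ transports the finite par case (Lem.~\ref{LemWellDefinedPar}) to a countable iteration, together with the fact that $\wn$ is formally dual to the of course comonad $\oc$ (Def.~\ref{DefExponential}), whose functoriality and (co)monad laws are established in \cite{mccusker1998games}. Concretely, I would split the argument into three parts: (i) $\wn$ is a well-defined endofunctor on $\sharp\mathcal{LG}$; (ii) $\eta = [\mathit{wst}]$ and $\mu = [\mathit{abs}]$ are well-defined morphisms and natural; (iii) the unit and associativity laws hold in $\sharp\mathcal{LG}$.

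For part (i), I would first check that $\wn A$ is a wf-game: the axioms on $\vdash_{\wn A}$ are inherited from those on $\vdash_A$ with the single extra layer of initial-function moves $\alpha$, and every chain $\star \vdash_{\wn A} \alpha \vdash_{\wn A} (a_1,i) \vdash_{\wn A} (a_2,i) \dots$ projects to a $\vdash_A$-chain, so well-foundedness of $A$ gives that of $\wn A$; the identification $\simeq_{\wn A}$ satisfies I1--I3 just as for $\oc$. Next, for a strict strategy $\phi : A \multimap B$ in $\sharp\mathcal{LG}$, I must show $\wn\phi$ (Def.~\ref{DefWhyNotOnStrategies}) is again strict, valid, winning, and linear on $\wn A \multimap \wn B$. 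Even-prefix-closure (S1) is componentwise; determinism (S2) is the first place strictness is needed, exactly as for par, since without it $\wn\phi$ could answer in distinct components and violate S2, whereas strictness deterministically fixes the component of each response. Strictness, linearity, totality, and noetherianity of $\wn\phi$ then reduce componentwise to those of $\phi$, and validity follows from $\simeq_{\wn A}$ and $\simeq_{\wn B}$ through the permutation $\varphi \in \mathcal{P}(\mathbb{N})$. Innocence is the delicate point: I would reproduce the argument of Lem.~\ref{LemWellDefinedPar}, noting that an O-move in the codomain $\wn B$ cannot switch components, so the P-view of any $\boldsymbol{s} \in \wn\phi$ stays inside a single thread $\boldsymbol{s} \upharpoonright i \in \phi$, whence innocence of $\phi$ yields that of $\wn\phi$. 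Finally $\wn(\psi \circ \phi) = \wn\psi \circ \wn\phi$, $\wn[\mathit{cp}_A] = [\mathit{cp}_{\wn A}]$, and preservation of $\simeq$ are dual to the corresponding facts for $\oc$, since all three operations act componentwise under $\boldsymbol{s} \upharpoonright i$.

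For part (ii), $\mathit{wst}_A$ and $\mathit{abs}_A$ are copy-cat--like strategies, so their validity, innocence, totality, linearity, strictness, and (since $A$ is wf) noetherianity follow as for copy-cats and derelictions (Lem.~\ref{LemWellDefinedCopyCats}, \ref{LemWellDefinedConstructionsOnStrategies}); hence $\eta_A,\mu_A$ are genuine morphisms of $\sharp\mathcal{LG}$, and naturality $\wn\phi \circ \mathit{wst}_A \simeq \mathit{wst}_B \circ \phi$ and $\mu_B \circ \wn\wn\phi \simeq \wn\phi \circ \mu_A$ are routine copy-cat diagram chases in which the reindexing hidden in $\mathit{abs}$ via $\langle \_, \_ \rangle : \mathbb{N} \times \mathbb{N} \stackrel{\sim}{\rightarrow} \mathbb{N}$ is absorbed by the permutations of $\simeq_{\wn(\_)}$. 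For part (iii), the unit laws $\mu_A \circ \eta_{\wn A} = \mu_A \circ \wn\eta_A = \mathit{id}_{\wn A}$ and associativity $\mu_A \circ \mu_{\wn A} = \mu_A \circ \wn\mu_A$ reduce to the monoid-like coherence of the multiplexing bijection, dually to the comonad laws of $\oc$: the two sides of associativity reindex $\mathbb{N} \times \mathbb{N} \times \mathbb{N}$ by $\langle\langle i,j\rangle,k\rangle$ and $\langle i,\langle j,k\rangle\rangle$, which need not agree on the nose but differ only by a permutation of $\mathbb{N}$, so the composites are identified by $\simeq_{\wn A}$ and hence equal as morphisms. This is precisely why the whole development is carried out on $\simeq$-classes rather than on strategies.

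I expect the main obstacle to be part (i): establishing that $\wn\phi$ is a deterministic strategy that preserves innocence and the remaining winning conditions, because the infinitely many parallel copies introduced by $\wn$ make both determinism and innocence fail for non-strict or non-linear $\phi$. Threading strictness and linearity through this argument---mirroring, but genuinely extending to a countable iteration, the finite par case of Lem.~\ref{LemWellDefinedPar}---is the crux; once the functor is in place, the monad structure follows formally by dualizing the of course comonad of \cite{mccusker1998games}.
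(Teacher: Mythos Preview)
Your proposal is correct and essentially expands what the paper leaves implicit: the paper's own proof is the single line ``Similarly to the corresponding proof in \cite{harmer2007categorical}.'' Your three-part decomposition (endofunctor, naturality of $\eta$ and $\mu$, monad laws up to $\simeq$) is exactly the content one would find by unwinding that reference, and your handling of the delicate points---strictness for determinism of $\wn\phi$, the P-view argument from Lem.~\ref{LemWellDefinedPar} for innocence, and the permutation-absorption via $\simeq_{\wn A}$ for the unit and associativity laws---is sound.

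One small remark: you attribute the preservation of innocence partly to \emph{linearity}, but the paper's final proof of Lem.~\ref{LemWellDefinedPar} derives innocence of $\phi\invamp\psi$ from innocence and strictness alone (the P-view of any position lies entirely in one component because O cannot switch the codomain parity and the domain contributes a single thread to the P-view). Since every morphism in $\sharp\mathcal{LG}$ is linear anyway this makes no difference to the result, but you need not thread linearity through that particular step.
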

\begin{proof}
Similarly to the corresponding proof in \cite{harmer2007categorical}.
\end{proof}

Now, based on Lem.~\ref{LemSharpLG}, \ref{LemWellDefinedPar} and \ref{LemWellDefinedWhyNot}, it is easy to establish: 
\begin{theorem}[BiLSMCC $\mathcal{LG}$]
\label{ThmBiLSMCCLG}
The NSC $\mathcal{LG}$ together with the lluf subBwLSMC $\sharp\mathcal{LG}$, the triple $(\invamp, \bot, \wn)$, the obvious natural transformations $\Omega$, $\Sigma$ and $\Pi$, natural isomorphisms (\ref{NI1})-(\ref{NI4}), and the distributive law $\Upsilon : \oc \wn  \Rightarrow \wn \oc$ given by:
\begin{align*}
\Upsilon_A &\stackrel{\mathrm{df. }}{=} \{ \boldsymbol{s} \in P_{\oc \wn A \multimap \wn \oc A}^{\mathsf{Even}} \mid \forall \boldsymbol{t} \preceq \boldsymbol{s} . \ \! \mathsf{Even}(\boldsymbol{t}) \\ &\Rightarrow \forall i \in \mathbb{N} . \ \! \boldsymbol{t} \upharpoonright \oc \wn A \upharpoonright 0 \upharpoonright i = \pi_1^\ast(\boldsymbol{t} \upharpoonright \wn \oc A \upharpoonright i) \ \! \}
\end{align*}
where $\pi_1^\ast(\boldsymbol{t} \upharpoonright \wn \oc A \upharpoonright i)$ is obtained from $\boldsymbol{t} \upharpoonright \wn \oc A \upharpoonright i$ by replacing each occurrence $(a, j)$ with $a$, is a BiLSMCC.
\end{theorem}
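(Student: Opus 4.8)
The plan is to unpack Definition~\ref{DefBiLSMCCs} clause by clause and discharge each using the three structural lemmas already proved. The ambient NSC $(\mathcal{LG}, \otimes, \top, \multimap, \oc)$ with finite products $(1, \&)$ is exactly Theorem~\ref{ThmLG}, and the weak finite coproducts $(0, \oplus)$ are supplied by Lemma~\ref{LemSharpLG} (they are genuinely weak in $\mathcal{LG}$, as $\oc 0$ fails to be initial). For the lluf subBwLSMC, Lemma~\ref{LemSharpLG} already gives that $(\sharp\mathcal{LG}, \otimes, \top, \oc)$ is a BwLSMC with finite coproducts inherited from $\mathcal{LG}$; what remains here are the hom-set equalities $\sharp\mathcal{LG}(A, \bot) = \mathcal{LG}(A, \bot)$ and $\sharp\mathcal{LG}(\top, B) = \mathcal{LG}(\top, B)$ together with terminality of $\top$ and initiality of $\bot$ in $\sharp\mathcal{LG}$. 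These follow from the move-structure of the unit games: a strategy into $\bot$ can never answer the unique initial question by a domain move, so it is vacuously strict, and dually every strategy out of the empty game $\top$ is strict; terminality of $\top$ and initiality of $\bot$ in $\sharp\mathcal{LG}$ then reduce to the corresponding facts about the $\otimes$- and $\invamp$-units together with these equalities.

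Next I would establish the FwL structure on $\sharp\mathcal{LG}$ with respect to $(\invamp, \bot, \wn)$. The bifunctor $\invamp$ is well-defined by Lemma~\ref{LemWellDefinedPar}, and its associativity, unit ($\bot$) and symmetry isomorphisms are copy-cat strategies defined exactly dually to those for $\otimes$, so the symmetric monoidal axioms hold by the same verification as for $\otimes$ with the $AB$-parity-switching convention interchanged between the two participants. The why not monad $\wn$ is well-defined by Lemma~\ref{LemWellDefinedWhyNot}. The two remaining FwL data are the monoidality of the canonical Kleisli adjunction and the Seely-type isomorphisms $\wn 0 \stackrel{\sim}{\rightarrow} \bot$ and $\wn(A \oplus B) \stackrel{\sim}{\rightarrow} \wn A \invamp \wn B$. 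Because $\wn A$ is the infinite par $A \invamp A \invamp \cdots$, exactly as $\oc A$ is the infinite product $A \mathbin{\&} A \mathbin{\&} \cdots$ in McCusker's setting, both isomorphisms and the monoidality of the adjunction are the formal duals of the corresponding new-Seely data for $\oc$, $\&$ and $\otimes$; I would reuse the ``bang lemma'' argument after interchanging the $\mathsf{O}/\mathsf{P}$ switching conventions and replacing $(\otimes, \&, \oc)$ by $(\invamp, \oplus, \wn)$. Corollary~\ref{CoroCoprodWeakProdInFwLSMCs} then records the induced (co)products in the Kleisli category, confirming the FwL package.

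It then remains to produce $\Omega$, $\Sigma$, $\Pi$, the natural isomorphisms (\ref{NI1})--(\ref{NI4}), and to check that $\Upsilon$ is a distributive law. Each of $\Omega$, $\Sigma$, $\Pi$ is the evident copy-cat relocating moves across the $\otimes/\invamp$ and $\oc/\wn$ tag boundaries; I would define each as the set of even positions whose two restrictions agree under the relevant tag relabelling, verify strictness (the defining discipline never forces an initial codomain move to be answered by a non-initial domain move) so that it lands in $\sharp\mathcal{LG}$, and read off naturality from the copy-cat form. The isomorphisms (\ref{NI1}), (\ref{NI2}) and (\ref{NI4}) are identifications of the underlying arenas up to tags: (\ref{NI1}) because $\top$ is empty, (\ref{NI2}) because $A \multimap B$ and $\neg A \invamp B = (A \multimap \bot) \invamp B$ share an arena once the vacuous $\bot$ is erased, and (\ref{NI4}) by the dual of the Seely product isomorphism. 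The remaining one, (\ref{NI3}), namely $\wn(A \multimap B) \cong \oc A \multimap \wn B$, is the key currying isomorphism, which I would exhibit by matching the $\mathbb{N}$-indexed threads on the two sides through a fixed pairing bijection on $\mathbb{N}$.

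The step I expect to be the main obstacle is verifying that $\Upsilon$ is genuinely a distributive law of $\oc$ over $\wn$ \cite{power2002combining}, i.e.\ that it satisfies $\wn\epsilon \circ \Upsilon = \epsilon\wn$, $\wn\delta \circ \Upsilon = \Upsilon\oc \circ \oc\Upsilon \circ \delta\wn$, $\Upsilon \circ \oc\eta = \eta\oc$ and $\Upsilon \circ \oc\mu = \mu\oc \circ \wn\Upsilon \circ \Upsilon\wn$. Since $\Upsilon_A$ is the copy-cat that transposes the single outer $\oc$-thread past the inner $\wn$-indexing, each equation reduces, after composing strategies and hiding internal moves, to an identity between two reindexings of $\mathbb{N}$ or $\mathbb{N} \times \mathbb{N}$; the real content is that the pairing bijection $\langle -, - \rangle$ used in $\mu$ and $\delta$ is compatible with the transposition performed by $\Upsilon$. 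This is analogous to the verification in \cite{harmer2007categorical}, and I would discharge it by a bookkeeping argument on tags rather than by any new idea. Once $\Upsilon$ is confirmed to be a distributive law, the final clause of Definition~\ref{DefBiLSMCCs} holds and $\mathcal{LG}$ is a BiLSMCC.
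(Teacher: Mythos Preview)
Your proposal is essentially the same approach as the paper's, which offers no proof beyond the line ``based on Lem.~\ref{LemSharpLG}, \ref{LemWellDefinedPar} and \ref{LemWellDefinedWhyNot}, it is easy to establish''; you have correctly identified those three lemmas as the load-bearing ingredients and unpacked how each clause of Definition~\ref{DefBiLSMCCs} is to be discharged, including the duality-with-$\oc$ heuristic for the FwL data and the tag-bookkeeping verification of the distributive-law axioms for $\Upsilon$ (which is indeed analogous to \cite{harmer2007categorical}).

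One small point where your justification does not go through as written: the claim that ``dually every strategy out of the empty game $\top$ is strict'' is not the dual of the $\bot$ case. For $\phi : A \multimap \bot$, the only P-moves enabled by the initial O-move $q$ are initial moves of $A$, so strictness is automatic. But for $\phi : \top \multimap B$, the P-response $n$ to an initial O-move of $B$ necessarily lies in $B$ (since $M_\top = \emptyset$), so the literal condition $n \in M_\top^{\mathsf{Init}} = \emptyset$ fails whenever Player responds at all. The paper does not address this either; the intended reading is presumably that strictness is vacuous when the domain is $\top$ (or that one works up to the isomorphism $\top \multimap B \cong B$), but you should make this convention explicit rather than appeal to a duality that does not hold on the nose.
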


\if0
\begin{theorem}[BiLSMCC of games and strategies]
The SMCC $\mathcal{LG}$ equipped with the triple $(\invamp, \bot, \wn)$ is BiL, and it satisfies:
\begin{enumerate}

\item $A \multimap B \cong \neg A \invamp B$, where $\neg A \stackrel{\mathrm{df. }}{=} A \multimap \bot$;

\item $1 \multimap B \cong B$

\item $A \invamp (B \& C) \cong (A \invamp B) \& (A \invamp C)$;

\item $\top \invamp A \cong \top$

\end{enumerate}
natural in $A, B, C \in \mathcal{LG}$.
\end{theorem}
\fi

\section{Cut-Elimination, Soundness and Completeness}
\label{CutElimSoundnessAndCompleteness}
Next, let us define cut-elimination processes on the calculi given in Sect.~\ref{SequentCalculi} by the game semantics given in Sect.~\ref{GameSemantics}. 

To define the cut-elimination processes, the following inductive, categorical notion plays a key role:
\begin{definition}[LL$^-$ morphisms]
\label{DefLinearityOfMorphisms}
Given a BiLSMCC $\mathcal{C}$, a morphism in $\sharp\mathcal{C}$ is \emph{\bfseries LL$\boldsymbol{^-}$} if it is of the following form:
\begin{equation}
\label{Linear}
A \stackrel{\varsigma}{\rightarrow} A' \stackrel{\phi}{\rightarrow} B' \stackrel{\varrho}{\rightarrow} B
\end{equation}
such that $\phi$, $\varsigma$ and $\varrho$ are morphisms in $\sharp\mathcal{C}$ inductively constructed respectively by the following grammars: 
\begin{itemize}

\item $\phi \stackrel{\mathrm{df. }}{=} \mathit{id} \mid \oc^\top \mid \oc^\bot \mid \phi \otimes \phi \mid \phi \invamp \phi \mid \langle \phi, \phi \rangle \mid [\phi, \phi] \mid \phi ; \phi \mid \oc \phi \mid \wn \phi \mid \lambda (\phi) \mid \lambda^{-1}(\phi)$

\item $\varsigma \stackrel{\mathrm{df. }}{=} \mathit{id} \mid \Omega \mid \Upsilon \mid \Pi \mid \alpha \mid \ell \mid \varpi \mid \epsilon \mid \delta \mid \theta \mid \pi_i \mid (\ref{NI2}) \mid \varsigma \otimes \varsigma \mid \varsigma \invamp \varsigma \mid \varsigma ; \varsigma$

\item $\varrho \stackrel{\mathrm{df. }}{=} \mathit{id} \mid \Omega \mid \Upsilon \mid \Sigma \mid \alpha \mid \ell \mid \varpi \mid \eta \mid \mu \mid \vartheta \mid \iota_i \mid (\ref{NI1}) \mid \varrho \otimes \varrho \mid \varrho \invamp \varrho \mid \varrho ; \varrho$

\end{itemize}
where $\oc^\top$ (resp. $\oc^\bot$) is the canonical one to $\top$ (resp. from $\bot$), $\alpha$, $\ell$ and $\varpi$ respectively range over associativities, units and symmetries w.r.t. $\otimes$ or $\invamp$, $\theta$ (resp. $\vartheta$) over natural isomorphisms of $\oc$ (resp. $\wn$), $\pi_i$ and $\iota_i$ over projections and injections ($i = 1, 2$), respectively, $\oc = (\oc, \epsilon, \delta)$, $\wn = (\wn, \eta, \mu)$, and $\lambda$ (resp. $\lambda^{-1}$) is currying w.r.t. the entire domain (resp. uncurrying w.r.t. the entire codomain).
\end{definition}

\begin{lemma}[Inductive semantics of LL$^-$]
\label{LemInductiveLinearity}
The interpretation of any proof in LLK$^-$ in any BiLSMCC is LL$^-$. 
\end{lemma}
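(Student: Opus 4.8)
The plan is to prove the statement by structural induction on the derivation $p$ in LLK$^-$, reading off the interpretation $\llbracket p\rrbracket$ rule by rule from the construction of Lem.~\ref{LemLLNegativeWithoutDst}, and checking at each node that the resulting morphism of $\sharp\mathcal{C}$ lies in the class of LL$^-$ morphisms of Def.~\ref{DefLinearityOfMorphisms}. The cleanest organization is to reduce the whole induction to a fixed list of \emph{closure properties} of the LL$^-$ class. Indeed, every generator of the $\varsigma$-, $\phi$- and $\varrho$-grammars is LL$^-$ (it factors trivially as $\mathit{id};\phi;\mathit{id}$ or $\varsigma;\mathit{id};\mathit{id}$ or $\mathit{id};\mathit{id};\varrho$), and by inspection of Lem.~\ref{LemLLNegativeWithoutDst} each rule interprets its conclusion by applying to the (inductively LL$^-$) interpretations of its premises one of the operations: composition $;$ (Cut$^-$, $\multimap$L, the bodies of $\oc$R and $\wn$L), tensor $\otimes$ and par $\invamp$ ($\otimes$R$^-$, $\invamp$L$^-$, and, after reassociation, $\otimes$L and $\invamp$R), pairing $\langle-,-\rangle$ and copairing $[-,-]$ ($\&$R$^-$, $\oplus$L$^-$), the functorial actions $\oc(-)$, $\wn(-)$ ($\oc$R, $\wn$L), currying/uncurrying $\lambda,\lambda^{-1}$ ($\multimap$R$^-$), and pre-/post-composition with single $\varsigma$-/$\varrho$-generators (the remaining structural and exponential rules). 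Hence it suffices to show that the LL$^-$ class is closed under each of these operations, after which the main induction on $p$ is a routine case check against the clauses of Lem.~\ref{LemLLNegativeWithoutDst}.

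The easy half of this list dispatches the bulk of the rules. Closure under $\otimes$ and $\invamp$ is immediate from bifunctoriality together with the fact that each of the three grammars is itself closed under $\otimes$ and $\invamp$: one has $(\varsigma_1;\phi_1;\varrho_1)\otimes(\varsigma_2;\phi_2;\varrho_2)=(\varsigma_1\otimes\varsigma_2);(\phi_1\otimes\phi_2);(\varrho_1\otimes\varrho_2)$, and dually for $\invamp$. Closure under pre-composition by a $\varsigma$-generator and post-composition by a $\varrho$-generator is likewise trivial by associativity, since $\varsigma;(\varsigma';\phi;\varrho)=(\varsigma;\varsigma');\phi;\varrho$ and $(\varsigma;\phi;\varrho);\varrho'=\varsigma;\phi;(\varrho;\varrho')$; this already covers $\&$L, $\oplus$R (projection/injection glue), $\top$L, $\bot$R, the exchanges XL, XR (symmetries $\varpi$), the dereliction rules $\oc$D, $\wn$D and the weakenings $\oc$W, $\wn$W, whose glue is assembled from $\epsilon,\delta,\pi_i,\oc^{\top}$ on the left and $\eta,\mu,\iota_i,\oc^{\bot}$ on the right.

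The substance of the proof — and the step I expect to be the main obstacle — is the remaining closures, in which applying an operation that is nominally a $\phi$-former to a \emph{full} factorization produces stray $\varsigma$/$\varrho$ data in the wrong position that must be renormalized into the canonical three-phase form. In composition the composite $(\varsigma_1;\phi_1;\varrho_1);(\varsigma_2;\phi_2;\varrho_2)$ has a mismatched middle $\varrho_1;\varsigma_2$; in $\oc(-)$ one gets $\oc\varsigma_1;\oc\phi_1;\oc\varrho_1$ with $\oc\phi_1$ already a $\phi$-core but $\oc\varsigma_1,\oc\varrho_1$ not literal generators; in contraction $\oc$C, $\wn$C and in currying the same phenomenon arises ($\delta;\oc\langle\mathit{id},\mathit{id}\rangle;\theta$ has a trailing $\varsigma$). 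The plan is to prove a single \textbf{commutation lemma}, by induction on the $\phi$-grammar, to the effect that any $\varsigma$-generator can be slid to the \emph{left} of a $\phi$-core and any $\varrho$-generator to the \emph{right}: this uses that every generator is a component of a natural transformation or a (co)projection, together with the comonad and monad laws, the isomorphisms $\Sigma,\Pi$ and the distributive law $\Upsilon:\oc\wn\Rightarrow\wn\oc$ to commute $\oc$ past $\wn$, and $\Omega$ to commute $\otimes$ past $\invamp$. Concretely, naturality of $\delta$ rewrites $\oc g;\delta$ as $\delta;\oc\oc g$ (and $\oc\oc g$ is again a $\phi$-core), and analogous identities handle $\eta,\mu,\epsilon,\theta,\vartheta,\iota_i,\pi_i$; in the $\oc$R and $\wn$L cases the ambient $\delta_\Delta,\Sigma_{B,\Gamma}$ resp. $\Pi_{\Delta,A},\mu_\Gamma$ are precisely the $\varsigma$/$\varrho$ glue into which the slid data is reabsorbed.

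Once the commutation lemma is in hand, closure under composition, $\oc(-)$, $\wn(-)$, $\lambda$, $\lambda^{-1}$ and pairing/copairing all follow as corollaries by pushing every $\varsigma$-generator to the front and every $\varrho$-generator to the back, leaving a genuine $\phi$-core in the middle; the main induction on $p$ then closes immediately. I therefore expect almost all of the effort to go into assembling this library of commutation identities and verifying that the three-phase form is preserved by composition in particular, the one operation where the mismatched middle $\varrho_1;\varsigma_2$ forces the heaviest use of $\Upsilon$, $\Omega$, $\Sigma$ and $\Pi$.
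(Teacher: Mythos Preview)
Your plan is correct and follows the same skeleton as the paper—structural induction on the LLK$^-$ derivation—though the paper's own proof is the single sentence ``By induction on proofs in LLK$^-$.'' Your identification of the commutation step (pushing $\varsigma$-generators left and $\varrho$-generators right of the $\phi$-core, via naturality and the (co)monad and distributive laws) as the substantive content is exactly what that one line leaves implicit; the closure analysis you outline is the right way to unpack it.
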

\begin{proof}
By induction on proofs in LLK$^-$.
\end{proof}

\begin{lemma}[Inductive definability]
\label{LemInductiveDefinability}
Given a BiLSMCC $\mathcal{C}$, let $\Delta$ and $\Gamma$ be sequences of formulas of LL$^-$, and $f : \llbracket \Delta \rrbracket \rightarrow \llbracket \Gamma \rrbracket$ a LL$^-$-morphism in $\sharp\mathcal{C}$, where $\llbracket \Delta \rrbracket, \llbracket \Gamma \rrbracket \in \mathcal{C}$ are the interpretations of the sequences.
Then, there is a proof $p$ of a sequent $\Delta \vdash \Gamma$ in LLK$^-$ whose interpretation $\llbracket p \rrbracket$ equals $f$.
\end{lemma}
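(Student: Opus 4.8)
The plan is to prove the statement by structural induction on the grammar that generates the LL$^-$-morphism $f$, thereby turning the converse direction of Lemma~\ref{LemInductiveLinearity} into an explicit synthesis of proofs. First I would observe that, because the source and target of $f$ are formula interpretations and every constructor occurring in the grammars for $\phi$, $\varsigma$ and $\varrho$ (namely $\otimes$, $\invamp$, $\&$, $\oplus$, $\oc$, $\wn$, $\multimap$) preserves the property of being the interpretation of an LL$^-$ formula, an easy induction propagating from the endpoints shows that every object $X$ occurring in the derivation of $f$ satisfies $X = \llbracket F_X \rrbracket$ for some formula $F_X$. Call a morphism $g : X \to Y$ in $\sharp\mathcal{C}$ \emph{definable} if $g = \llbracket p \rrbracket$ for a proof $p$ of $F_X \vdash F_Y$ in LLK$^-$. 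Since $\otimes\Delta \vdash \invamp\Gamma$ and $\Delta \vdash \Gamma$ are interchangeable via iterated $\otimes$L and $\invamp$R without altering the interpretation, it suffices to produce a proof of the single-formula sequent $F_{\llbracket\Delta\rrbracket} \vdash F_{\llbracket\Gamma\rrbracket}$ interpreting to $f = \varsigma ; \phi ; \varrho$, and then read it back through these rules to obtain the required proof of $\Delta \vdash \Gamma$.

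The induction then has two layers. For the \emph{combinators} I would show that definable morphisms are closed under each operation of the three grammars: composition $g ; h$ is handled by the rule Cut, which is derivable in LLK$^-$ (as shown earlier) and whose interpretation is $\circ$, so the induction hypothesis gives definability of $g ; h$; the product $g \otimes h$ is obtained from $\otimes$R$^-$ followed by $\otimes$L, and dually $g \invamp h$ from $\invamp$L$^-$ followed by $\invamp$R; pairing $\langle g, h \rangle$ and copairing $[g, h]$ are $\&$R$^-$ and $\oplus$L$^-$; $\oc g$ and $\wn g$ are the interpretations of the promotions $\oc$R and $\wn$L; and $\lambda(g)$, $\lambda^{-1}(g)$ are realized by $\multimap$R$^-$ and by uncurrying into $\bot$ (as in the interpretation of $\multimap$L). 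In each case I would quote the matching interpretation clause from the proof of Lemma~\ref{LemLLNegativeWithoutDst} to certify that the proof just assembled interprets precisely to the intended morphism.

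For the \emph{generators} I would exhibit, one by one, a short proof whose interpretation is the given morphism. The identity is Id; the canonical maps $\oc^\top$, $\oc^\bot$ come from $\top$R, $\bot$L together with terminality of $\top$ and initiality of $\bot$; $\Omega$ and $\Upsilon$ are exactly the interpretations of the distribution rules $\otimes\invamp$ and $\oc\wn$; the associativities, units and symmetries $\alpha$, $\ell$, $\varpi$ are built from the exchange rules XL, XR and the monoidal coherence isomorphisms; the (co)monad data $\epsilon$, $\delta$, $\eta$, $\mu$ and the structural maps $\pi_i$, $\iota_i$ arise from $\oc$D, $\oc$C, $\wn$D, $\wn$C and from $\&$L, $\oplus$R; and the isomorphisms (\ref{NI1}), (\ref{NI2}) and $\theta$, $\vartheta$ are definable from the NSC/FwL isomorphisms that are forced by the rule interpretations.

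The main obstacle I expect is the two auxiliary transformations $\Pi_{A,B} : \oc A \otimes \wn B \to \wn(\oc A \invamp B)$ and $\Sigma_{A,B} : \oc(A \invamp \wn B) \to \oc A \invamp \wn B$, which, unlike $\Omega$ and $\Upsilon$, correspond to no single primitive rule but appear only \emph{composed with other data} inside the interpretations of $\wn$L and $\oc$R. To isolate them I would feed those rules the right unit: apply $\wn$L to a proof interpreting $\eta$ and apply $\oc$R to a proof interpreting $\epsilon$, and then collapse the extra factors $\wn a ; \mu$ and $\delta ; \oc b$ using the triangle laws $\wn\eta ; \mu = \mathit{id}$ and $\delta ; \oc\epsilon = \mathit{id}$, leaving exactly $\Pi$ and $\Sigma$. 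The delicate point is that these collapses must hold on the nose from the monad and comonad axioms that are part of the BiLSMCC structure, rather than merely up to the equations validated by cut-elimination in Sect.~\ref{CutElimSoundnessAndCompleteness}. Once every generator and combinator is discharged, the simultaneous induction on the derivations of $\varsigma$, $\phi$ and $\varrho$ yields definability of each, and closure under composition assembles a proof of $\Delta \vdash \Gamma$ interpreting to $f$, completing the argument.
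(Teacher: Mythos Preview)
Your proposal is correct and follows the same overall plan as the paper---structural induction on the grammar of LL$^-$-morphisms---but you take a more explicit route for the two outer layers. The paper's proof simply writes $f=\varsigma;\phi;\varrho$ and then disposes of $\varsigma$ and $\varrho$ in one sentence: by \emph{naturality} each generator of $\varsigma$ (resp.\ $\varrho$) is exactly the effect, on interpretations, of applying the corresponding structural or distribution rule of LLK$^-$ to the hypothesis (resp.\ conclusion) of a proof; hence one first defines a proof for $\phi$ by induction and then wraps it with the rule applications dictated by $\varsigma$ and $\varrho$. In contrast, you treat $\varsigma$ and $\varrho$ on the same footing as $\phi$, proving each generator definable as a stand-alone morphism and gluing by Cut. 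Your route buys self-containment and surfaces the only nontrivial point---that $\Pi$ and $\Sigma$ are not themselves interpretations of a single rule---together with the right fix via the (co)monad triangle identities; the paper's route buys brevity and avoids introducing Cuts for the structural layer, since those generators are realized as rule applications rather than as separate proofs composed in. Either way, the inner induction on $\phi$ (including the case $\phi;\phi$) is the same, and your preliminary observation that every intermediate object is a formula interpretation is implicit in the paper's argument as well.
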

\begin{proof}
Since $f$ is LL$^-$, we may write $f = \varrho \circ \phi \circ \varsigma$; see (\ref{Linear}).
By the structural and the distribution rules in LLK$^-$ and naturality of $\varsigma$ and $\varrho$, $\varsigma$ and $\varrho$ may be excluded; thus, it suffices to prove definability of $\phi$.
Then, it is immediate by induction on $\phi$.
\end{proof}

By Lem.~\ref{LemInductiveLinearity} and \ref{LemInductiveDefinability}, we may first compute the interpretation $\llbracket p \rrbracket$ of any given proof $p$ in LLK$^-$ in $\sharp\mathcal{LG}$ (as defined in the proof of Thm.~\ref{ThmSemanticsOfLLNegative}) and then calculate the proof $\mathsf{nf}(p)$ in LLK$^-$, called the \emph{\bfseries normal-form} of $p$, from $\llbracket p \rrbracket$ such that $\llbracket \mathsf{nf}(p) \rrbracket = \llbracket p \rrbracket$ (as defined in the proof of Lem.~\ref{LemInductiveDefinability}).
Note that there is no Cut$^-$ occurring in $\mathsf{nf}(p)$, i.e., $\mathsf{nf}(p)$ is \emph{cut-free}; that is, we have defined a \emph{cut-elimination} process $\mathsf{nf}$ on LLK$^-$.
Combined with Thm.~\ref{ThmTranslationOfCLIntoLL}, it is not hard to give such a process on LK$^-$, and by the same method, on LLJ (without $\oplus$) and LJ (without $\vee$) as well, which for lack of space we omit.
To summarize:
\if0
\begin{definition}[Cut-elimination]
\label{DefCutElim}
Given a proof $p$ of a sequent $\Delta \vdash \Gamma$ in LLK$^-$ (resp. LK$^-$, LLJ, LJ), the \emph{\bfseries normal-form} $\mathsf{nf}(p)$ of $p$ is given by $\mathsf{nf}(p) \stackrel{\mathrm{df. }}{=} \mathcal{F}(\llbracket p \rrbracket)$, where $\llbracket \_ \rrbracket$ is the interpretation of the calculus in $\sharp\mathcal{LG}$ (resp. $\sharp\mathcal{LG}_\oc^\wn$, $\mathcal{LG}^{\mathsf{wc}}$, $\mathcal{LG}^{\mathsf{wc}}_\oc$), and $\mathcal{F}$ maps each $f : \llbracket \Delta \rrbracket \rightarrow \llbracket \Gamma \rrbracket$ to the proof $\mathcal{F}(f)$ of $\Delta \vdash \Gamma$ such that $\llbracket \mathcal{F}(f) \rrbracket = f$ (given in the proof of Thm.~\ref{ThmFullCompleteness}), where $\mathcal{LG}^{\mathsf{wc}}$ is the full subNSC of $\mathcal{LG}$ whose objects are wc.
\end{definition}
\fi

\begin{theorem}[Correctness]
Given a proof $p$ of a sequent $\Delta \vdash \Gamma$ in LLK$^-$ (resp. LK$^-$, LLJ, LJ), the normal-form $\mathsf{nf}(p)$ of $p$ is cut-free, and $\llbracket \mathsf{nf}(p) \rrbracket = \llbracket p \rrbracket$, where $\llbracket \_ \rrbracket$ is the interpretation of the calculus in $\sharp\mathcal{LG}$ (resp. $\sharp\mathcal{LG}_\oc^\wn$, $\mathcal{LG}^{\mathsf{wb}}$, $\mathcal{LG}^{\mathsf{wb}}_\oc$).
\end{theorem}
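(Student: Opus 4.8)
The plan is to read this statement as the packaging of a \emph{normalization-by-evaluation} argument, so that both conclusions fall out of Lem.~\ref{LemInductiveLinearity} and Lem.~\ref{LemInductiveDefinability} together with a close reading of how the definability read-back is assembled. I would establish the claim for LLK$^-$ first, working in $\sharp\mathcal{LG}$, and then transport it to LK$^-$, LLJ and LJ along the translations of Sect.~\ref{SequentCalculi} and the restriction to well-bracketed strategies. The semantic equation for LLK$^-$ is in fact immediate from the very definition of $\mathsf{nf}$: by Lem.~\ref{LemInductiveLinearity} the morphism $\llbracket p \rrbracket : \llbracket \Delta \rrbracket \rightarrow \llbracket \Gamma \rrbracket$ in $\sharp\mathcal{LG}$ is LL$^-$, so Lem.~\ref{LemInductiveDefinability} (instantiated at the BiLSMCC $\mathcal{LG}$) yields a proof of the \emph{same} sequent $\Delta \vdash \Gamma$ whose interpretation is exactly $\llbracket p \rrbracket$; this proof is by definition $\mathsf{nf}(p)$, whence $\llbracket \mathsf{nf}(p) \rrbracket = \llbracket p \rrbracket$ with no further work.

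The substance is therefore the cut-freeness of $\mathsf{nf}(p)$, which I would obtain by inspecting the construction inside the proof of Lem.~\ref{LemInductiveDefinability}. Writing $\llbracket p \rrbracket = \varrho \circ \phi \circ \varsigma$ as in (\ref{Linear}), the auxiliary factors $\varsigma$ and $\varrho$ range only over identities, the natural transformations $\Omega, \Upsilon, \Sigma, \Pi$, associativities, units and symmetries for $\otimes$ and $\invamp$, the (co)units and (co)multiplications of $\oc$ and $\wn$, projections and injections, and the isomorphisms (\ref{NI1}) and (\ref{NI2}), closed under $\otimes$, $\invamp$ and composition. Each of these is realized using solely the structural rules (XL, XR, the weakenings and contractions) and the distribution rules ($\oc\wn$L, $\oc\wn$R, $\otimes\invamp$L, $\otimes\invamp$R), and is pushed to the correct position by naturality; none of these inferences is Cut$^-$. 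The core factor $\phi$ is then realized by induction on its grammar using only the logical rules $\otimes$R$^-$, $\invamp$L$^-$, $\&$R$^-$, $\oplus$L$^-$, $\multimap$R$^-$, $\multimap$L, $\oc$R, $\wn$L together with the $\oc/\wn$ dereliction/weakening/contraction rules. Since Cut$^-$ is the only rule of LLK$^-$ whose interpretation is a composition against a genuine intermediate cut formula, and every composition actually produced by the read-back is either absorbed into the $\varsigma/\varrho$ factors or is a stacking of such logical rules, no instance of Cut$^-$ ever occurs; hence $\mathsf{nf}(p)$ is cut-free.

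For the three remaining calculi I would argue by the identical method in the appropriate category. For LLJ without $\oplus$ and LJ without $\vee$ one restricts to intuitionistic sequents and to the well-bracketed subNSC $\mathcal{LG}^{\mathsf{wb}}$, respectively $\mathcal{LG}^{\mathsf{wb}}_\oc$; because the interpretations of the intuitionistic rules land in well-bracketed strategies and the corresponding read-back uses exactly the same introduction and structural rules, the same reasoning yields a cut-free normal form with $\llbracket \mathsf{nf}(p) \rrbracket = \llbracket p \rrbracket$. For LK$^-$ one interprets in the bi-Kleisli category $\sharp\mathcal{LG}_\oc^\wn$ and reads back \emph{directly} into LK$^-$, matching the connective translations $\wedge, \vee, \Rightarrow$ of Thm.~\ref{ThmTranslationOfCLIntoLL} with $\&, \oplus, \multimap$ and the exponentials, the structural and distributive content again being realized without the classical Cut.

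The main obstacle I expect is precisely the cut-freeness bookkeeping, and within it the treatment of the composition constructor $\phi;\phi$ appearing in the grammar of $\phi$, whose semantic counterpart coincides with the interpretation of Cut$^-$: one must verify that every such composition encountered by the read-back is genuinely discharged by naturality/absorption into the $\varsigma$ and $\varrho$ factors rather than by exhibiting an intermediate cut formula. For LK$^-$ the secondary difficulty is to check that the read-back can be carried out \emph{within} LK$^-$ itself, not merely in LLK$^-$ under the translation $\mathscr{T}_{\mathsf{c}}$, so that $\mathsf{nf}(p)$ is a bona fide LK$^-$ proof of $\Delta \vdash \Gamma$.
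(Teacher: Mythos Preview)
Your proposal is correct and follows essentially the same route as the paper: the theorem is stated after ``To summarize:'', so the paper's own argument is just the preceding paragraph, which invokes Lem.~\ref{LemInductiveLinearity} and Lem.~\ref{LemInductiveDefinability} to define $\mathsf{nf}$ as the read-back of $\llbracket p \rrbracket$, asserts cut-freeness of the result, and extends to LK$^-$ via Thm.~\ref{ThmTranslationOfCLIntoLL} and to LLJ/LJ ``by the same method''. The obstacle you single out---the constructor $\phi;\phi$ in the grammar of Def.~\ref{DefLinearityOfMorphisms}, whose naive read-back would be Cut$^-$---is precisely the point the paper glosses over when it simply writes ``Note that there is no Cut$^-$ occurring in $\mathsf{nf}(p)$''; neither you nor the paper spells out how this case is discharged.
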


\begin{theorem}[Categorical soundness/completeness]
\label{ThmCategoricalSoundnessAndCompleteness}
Given a BiLSMCC $\mathcal{C}$, the interpretation of LLK$^-$ (resp. LK$^-$, LLJ, LJ) in $\sharp\mathcal{C}$ (resp. $\sharp\mathcal{C}_\oc^\wn$, $\mathcal{C}$, $\mathcal{C}_\oc$) is equationally sound and complete w.r.t. the cut-elimination defined above.
\end{theorem}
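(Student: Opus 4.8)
The plan is to prove the biconditional underlying the statement: for proofs $p, q$ of a common sequent, $\llbracket p \rrbracket = \llbracket q \rrbracket$ in the model if and only if $\mathsf{nf}(p) = \mathsf{nf}(q)$, the backward implication being soundness and the forward one completeness. I treat the principal case of LLK$^-$ interpreted in $\sharp\mathcal{C}$; the remaining three cases follow by transporting the argument along the translations $\mathscr{T}_{\mathsf{c}}$ and $\mathscr{T}_{\mathsf{i}}$ (Thm.~\ref{ThmTranslationOfCLIntoLL} and~\ref{ThmTranslationOfILIntoILL}) and the (bi\nobreakdash-)Kleisli constructions through which the interpretations in $\sharp\mathcal{C}_\oc^\wn$, $\mathcal{C}$ and $\mathcal{C}_\oc$ were defined, using cartesian closure of $\mathcal{C}_\oc$ (Cor.~\ref{CoroCategoricalSemanticsOfIL}) and the bi-Kleisli semantics of Cor.~\ref{CorSemanticsOfCL}.

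Since two proofs are identified by cut-elimination exactly when they share a normal form, soundness reduces to the single claim that $\llbracket p \rrbracket_{\sharp\mathcal{C}} = \llbracket \mathsf{nf}(p) \rrbracket_{\sharp\mathcal{C}}$ in every BiLSMCC $\mathcal{C}$: then $\mathsf{nf}(p) = \mathsf{nf}(q)$ yields $\llbracket p \rrbracket_{\sharp\mathcal{C}} = \llbracket \mathsf{nf}(p) \rrbracket_{\sharp\mathcal{C}} = \llbracket \mathsf{nf}(q) \rrbracket_{\sharp\mathcal{C}} = \llbracket q \rrbracket_{\sharp\mathcal{C}}$. By Lem.~\ref{LemInductiveLinearity} every proof denotes an LL$^-$-morphism, so $\llbracket - \rrbracket_{\sharp\mathcal{C}}$ is a structure-preserving functor and factors through the free BiLSMCC on the empty signature via the unique interpretation into $\mathcal{C}$; it therefore suffices to prove $\llbracket p \rrbracket = \llbracket \mathsf{nf}(p) \rrbracket$ already in this free BiLSMCC. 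Although $\mathsf{nf}$ is phrased as a semantic round trip through $\sharp\mathcal{LG}$, the elementary reductions it effects—cut against Id and against the introduction rules, commuting conversions, and the bookkeeping for $\oc$, $\wn$, $\otimes$, $\invamp$—each correspond to an equation valid by the very axioms of a BiLSMCC: functoriality of the connectives, naturality of $\Omega, \Upsilon, \Sigma, \Pi$ and of the coherence isomorphisms (\ref{NI1})--(\ref{NI4}), the comonad/monad laws for $(\oc, \epsilon, \delta)$ and $(\wn, \eta, \mu)$, the distributive-law equations for $\Upsilon$, and the triangle laws witnessing currying and uncurrying. Checking these finitely many equation schemas is routine but carries the real content of soundness.

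For completeness I would exploit that the normalization was itself defined through a concrete BiLSMCC. By Thm.~\ref{ThmBiLSMCCLG}, $\sharp\mathcal{LG}$ is a BiLSMCC, and by construction $\mathsf{nf}(p) = \mathcal{F}(\llbracket p \rrbracket_{\sharp\mathcal{LG}})$, where $\mathcal{F}$ is the reification of Lem.~\ref{LemInductiveDefinability}. Hence equality of game-semantic interpretations forces equal normal forms: $\llbracket p \rrbracket_{\sharp\mathcal{LG}} = \llbracket q \rrbracket_{\sharp\mathcal{LG}}$ gives $\mathsf{nf}(p) = \mathcal{F}(\llbracket p \rrbracket_{\sharp\mathcal{LG}}) = \mathcal{F}(\llbracket q \rrbracket_{\sharp\mathcal{LG}}) = \mathsf{nf}(q)$. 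Because $\sharp\mathcal{LG}$ is one of the BiLSMCCs under consideration, whenever $\llbracket p \rrbracket = \llbracket q \rrbracket$ holds in the free BiLSMCC—equivalently, in every BiLSMCC—it holds in $\sharp\mathcal{LG}$, so the normal forms coincide; combined with soundness this shows that $\llbracket - \rrbracket$ identifies exactly the cut-elimination classes, as required.

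The main obstacle I expect is the soundness step. Because the cut-elimination is specified by normalization-by-evaluation rather than by a list of syntactic rewrites, one must show that the semantic round trip $p \mapsto \mathsf{nf}(p)$ is realized, up to the BiLSMCC axioms, by a composite of the axiom-justified equalities above—i.e. that the reify--reflect pass is the identity already in the free BiLSMCC and not merely in $\sharp\mathcal{LG}$. Concretely this means analysing how $\mathcal{F}$ reads a cut-free proof off an LL$^-$-morphism and verifying that re-interpreting that proof returns the original morphism modulo the structural pre- and post-composites $\varsigma$ and $\varrho$ of Def.~\ref{DefLinearityOfMorphisms}, whose naturality is precisely what lets them be absorbed. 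Once this factorization is established, soundness holds uniformly across all BiLSMCCs and completeness follows from the faithfulness of the single game model, completing the proof.
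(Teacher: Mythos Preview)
Your approach is essentially the paper's. The paper's proof reads in full: ``The soundness is by induction on proofs, and the completeness immediately follows from Thm.~\ref{ThmBiLSMCCLG}.'' Your completeness argument is exactly this---$\sharp\mathcal{LG}$ is a BiLSMCC, and since $\mathsf{nf}$ factors through $\llbracket-\rrbracket_{\sharp\mathcal{LG}}$, equality in every BiLSMCC forces equal normal forms---and your soundness argument is a fleshed-out version of the paper's one-word ``induction''.

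The one place where you are more careful than the paper is worth noting. You correctly observe that because $\mathsf{nf}$ is defined semantically (as reify-after-evaluate in $\sharp\mathcal{LG}$) rather than by an explicit rewrite system, the phrase ``induction on proofs'' does not literally apply: there is no inductive presentation of the relation $p \mapsto \mathsf{nf}(p)$ to induct over. What actually makes soundness go through is that both the interpretation $\llbracket-\rrbracket$ and the reification $\mathcal{F}$ of Lem.~\ref{LemInductiveDefinability} are defined \emph{uniformly} in the BiLSMCC structure---the LL$^-$-morphism decomposition of Def.~\ref{DefLinearityOfMorphisms} uses only the data present in every BiLSMCC---so the equation $\llbracket \mathcal{F}(f) \rrbracket = f$ established in $\sharp\mathcal{LG}$ transports to any $\sharp\mathcal{C}$. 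Your free-BiLSMCC factorization is one clean way to say this; the paper simply leaves it implicit. Your caveat about needing $\mathcal{F}\circ\llbracket-\rrbracket$ to be the identity up to the structural isomorphisms $\varsigma,\varrho$ is exactly the content the paper's ``induction on proofs'' is gesturing at.
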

\begin{proof}
The soundness is by induction on proofs, and the completeness immediately follows from Thm.~\ref{ThmBiLSMCCLG}.
\end{proof}

\section{Full Completeness}
\label{FullCompleteness}
Lem.~\ref{LemInductiveDefinability} characterizes definable strategies only \emph{inductively}, which is not satisfactory as full completeness \emph{per se} (n.b., it was to define the cut-elimination procedure). 
This last main section addresses the problem: Focusing on finite, \emph{strongly linear} strategies, it gives a \emph{non-inductive} full completeness:

\begin{definition}[Strong linearity of strategies]
A strategy $\phi : A \multimap B$ is \emph{\bfseries strongly linear} if it is linear, strict, and satisfies: 
\begin{enumerate}

\item If $B = (X_1 \otimes X_2) \invamp Y$ for some games $X_1$, $X_2$ and $Y$, then $\phi = A \stackrel{\phi'}{\multimap} X_i \otimes (X_j \invamp Y) \stackrel{\Omega}{\multimap} (X_1 \otimes X_2) \invamp Y$ for some $\phi' : A \multimap X_i \otimes (X_j \invamp Y)$, where $i \neq j$;

\item If $A = X_1 \otimes (X_2 \invamp Y)$ for some games $X_1$, $X_2$ and $Y$, then $\phi = X_1 \otimes (X_2 \invamp Y) \stackrel{\Omega}{\multimap} (X_i \otimes X_j) \invamp Y \stackrel{\phi''}{\multimap} B$ for some $\phi'' : (X_i \otimes X_j) \invamp Y \multimap B$, where $i \neq j$;

\item If $B = \oc X \invamp \wn Y$ for some games $X$ and $Y$, then $\phi = A \stackrel{\phi'}{\multimap} \oc (A \invamp \wn Y) \stackrel{\Sigma}{\multimap} \oc X \invamp \wn Y$ for some $A \stackrel{\phi'}{\multimap} \oc (A \invamp \wn Y)$;

\item If $A = \oc X \otimes \wn Y$ for some games $X$ and $Y$, then $\phi = \oc X \otimes \wn Y \stackrel{\Pi}{\multimap} \wn (\oc A \invamp Y) \stackrel{\phi''}{\multimap} B$ for some $\wn (\oc A \invamp Y) \stackrel{\phi''}{\multimap} B$;

\item If $B = B_1 \oplus B_2$, then $\phi = A \stackrel{\phi_i}{\multimap} B_i \stackrel{\iota_i}{\multimap} B_1 \oplus B_2$ for some $i \in \overline{2}$ and $\phi_i : A \multimap B_i$.

\end{enumerate}
\end{definition}

\if0
\begin{lemma}[Distribution lemma]
Let $\phi : A \multimap B$ be a strict, linear, innocent strategy.
Then, we have the following: 
\begin{enumerate}

\item \label{FirstD} If $B = (X_1 \otimes X_2) \invamp Y$ for L-games $X_1$, $X_2$ and $Y$, then $\phi = A \stackrel{\phi'}{\multimap} X_i \otimes (X_j \invamp Y) \stackrel{\Omega}{\multimap} (X_1 \otimes X_2) \invamp Y$ for some strict, linear, innocent strategy $\phi'$, where $i \neq j$;

\item \label{SecondD} If $A = X_1 \otimes (X_2 \invamp Y)$ for L-games $X_1$, $X_2$ and $Y$, then $\phi = X_1 \otimes (X_2 \invamp Y) \stackrel{\Omega}{\multimap} (X_i \otimes X_j) \invamp Y \stackrel{\phi''}{\multimap} B$ for some strict, linear, innocent strategy $\phi''$, where $i \neq j$.

\end{enumerate}
\end{lemma}
\fi

\if0
\begin{lemma}[Bang lemma \cite{mccusker1998games}]
\label{LemBangLemma}
Given an innocent strategy $\phi : \oc A \multimap \oc B$, we have $(\mathit{der}_B \circ \phi)^\dagger \simeq_{\oc A \multimap \oc B} \phi$.
\end{lemma}

\begin{lemma}[Cobang lemma]
\label{LemCoBangLemma}
Given an innocent strategy $\psi : \wn A \multimap \wn B$, we have $\mathit{abs}_A \circ \wn (\phi \circ \mathit{wst}_A) \simeq_{\wn A \multimap \wn B} \psi$.
\end{lemma}
\begin{proof}
Symmetric to the proof of Lem.~\ref{LemBangLemma}.
\end{proof}
\fi 

\begin{theorem}[Full completeness]
\label{ThmFullCompleteness}
Let $\llbracket \Delta \rrbracket \rightarrow \llbracket \Gamma \rrbracket$ be the interpretation of a sequent $\Delta \vdash \Gamma$ without atoms in LLK$^-$ (resp. LLJ without $\oplus$) in $\sharp\mathcal{LG}$ (resp. $\mathcal{LG}^{\mathsf{wb}}$), and $[\phi] : \llbracket \Delta \rrbracket \rightarrow \llbracket \Gamma \rrbracket$ in the category such that $\phi$ is finite and strongly linear.
Then, there is a proof $p$ of $\Delta \vdash \Gamma$ in the calculus such that $\llbracket p \rrbracket = [\phi]$. 
\end{theorem}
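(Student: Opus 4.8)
The plan is to prove definability by a well-founded induction that dismantles the strongly linear strategy $\phi$ one rule at a time, reading off from its shape which rule of the calculus produced the conclusion. Since the semantics of Lem.~\ref{LemLLNegativeWithoutDst} interprets $A_1, \dots, A_m \vdash B_1, \dots, B_n$ by a strategy on $(A_1 \otimes \dots \otimes A_m) \multimap (B_1 \invamp \dots \invamp B_n)$, the whole argument is a case analysis on the outermost shapes of the domain game $\otimes \Delta$ and the codomain game $\invamp \Gamma$, where I use associativities and symmetries — interpreting the exchange rules XL and XR — to rotate the relevant component to the outside. This is where strong linearity earns its keep: in contrast to Lem.~\ref{LemInductiveDefinability}, which only characterises denotable morphisms inductively via the grammar of LL$^-$-morphisms, its five clauses force a concrete factorisation of $\phi$ that I can match to a single syntactic rule.

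First I would dispatch the cases governed by the clauses of strong linearity. If some $B_j$ is a tensor, clause (1) forces $\phi = \phi' ; \Omega$, matching the distribution rule $\otimes\invamp$R; the exposed top-level tensor of the codomain of $\phi'$ then lets me split $\phi'$ as $\phi_1 \otimes \phi_2$ and apply the derived $\otimes$R, producing two sequents of strictly smaller size. Symmetrically, a par among the $A_i$ triggers clause (2) and $\otimes\invamp$L followed by $\invamp$L$^-$; a codomain of the form $\oc X \invamp \wn Y$ triggers clause (3) and $\oc$R via $\Sigma$ and promotion; a domain of the form $\oc X \otimes \wn Y$ triggers clause (4) and $\wn$L via $\Pi$; and a codomain $\oplus$ triggers clause (5) and $\oplus$R via the injection $\iota_i$. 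In every case the residual strategy is again finite and strongly linear, so I append the recognised rule to the proof obtained from the induction hypothesis.

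Once none of clauses (1)–(5) applies, the games are in a reduced shape, and I analyse the single initial Opponent move together with Player's unique response. By strictness, linearity and determinism the justifier of that response pins down exactly one component of the domain or codomain, and that pointer reads off one of the remaining rules: Id and the dereliction, weakening and contraction rules for $\oc$ and $\wn$; the projection $\&$L and the pairing and copairing rules $\&$R$^-$ and $\oplus$L$^-$; the multiplicatives $\otimes$R$^-$, $\invamp$L$^-$ and $\multimap$L; and currying $\multimap$R$^-$, using that $\sharp\mathcal{LG}(\top, B) = \mathcal{LG}(\top, B)$. I peel the recognised rule off and recurse on the strictly smaller residual, exactly as in the standard game-semantic definability argument for the $(\oc, \&, \otimes, \multimap)$-fragment and its $(\wn, \oplus, \invamp)$-dual.

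The hard part will be the bookkeeping that makes this induction well-founded, because the distribution steps (1) and (2) rewrite $(X_1 \otimes X_2) \invamp Y$ into $X_i \otimes (X_j \invamp Y)$ and leave the connective count unchanged; this is why I bundle each such step with the multiplicative split it enables, so that the combined move strictly shrinks the sequent and the induction can run on the number of P-views together with sequent size. The companion crux is showing that strong linearity and finiteness are inherited by every residual — that the $\Omega$-, $\Sigma$-, $\Pi$-factors and the tensor, par, pairing and copairing components of a strongly linear strategy exist, are themselves strongly linear, and are unique up to $\simeq_G$ — which again rests on strictness and linearity forcing Player's pointers. Finally, the intuitionistic case (LLJ without $\oplus$ in $\mathcal{LG}^{\mathsf{wb}}$) is obtained by restriction rather than a separate argument: dropping the $\invamp$, $\wn$ and $\oplus$ clauses makes every sequent single-conclusion, and well-bracketing of $\phi$ is precisely what forces Player to answer the pending question in that unique conclusion, keeping all intermediate sequents intuitionistic.
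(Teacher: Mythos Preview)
Your proposal is correct and follows essentially the same approach as the paper: both argue by induction on the structure of $\Delta, \Gamma$, using the clauses of strong linearity (together with strictness and finiteness) to force a factorisation of $\phi$ that matches a single rule of LLK$^-$, and then recurse. The paper's printed proof is little more than the one-line declaration ``by induction on $\Delta, \Gamma$ using finiteness and strong linearity,'' with the LLJ case ``just similarly''; your write-up simply makes explicit the case analysis, the well-foundedness bookkeeping around the distribution steps, and the inheritance of strong linearity by the residuals, all of which the paper leaves implicit.
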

\begin{proof}
By finiteness and strong linearity of $\phi$, we may show full completeness of the interpretation of LLK$^-$ in $\sharp\mathcal{LG}$ by induction on $\Delta, \Gamma$.
\if0
If $\Delta = \bot$ (resp. $\Delta = \top$), then we may apply $0$L (resp. $\top$R) by strictness (resp. linearity) of $\phi$.
If $\Delta = A_1 \& A_2$, then $\phi = A_1 \& A_2 \stackrel{\pi_i}{\multimap} A_i \stackrel{\phi_i}{\multimap} \Gamma$ for some $i \in \overline{2}$ and $\phi_i : A_i \multimap \Gamma$; thus it is reduced to the induction hypothesis by $\&$L.
If $\Delta = A_1 \oplus A_2$, then $\phi = A_1 \oplus A_2 \stackrel{[\phi_1, \phi_2]}{\multimap} \Gamma$ for $\phi_i \stackrel{\mathrm{df. }}{=} A_i \stackrel{\iota_i}{\multimap} A_1 \oplus A_2 \stackrel{\phi}{\multimap} \Gamma$ ($i = 1, 2$); thus it is reduced to the induction hypothesis by $\oplus$L.
If $\Delta = \oc A$, then by finiteness of $\phi$, we may exclude the occurrence of $\oc$ by $\oc$W, $\oc$C and $\oc$D.
If $\Delta = \wn A$, then we may delete the occurrence of $\wn$ by linearity of $\phi$ and $\wn$L.
Hence, we henceforth assume that $\Delta$ is either a tensor or a par. 
Similarly, we henceforth assume that $\Gamma$ is either a tensor or a par.

Next, observe that $\phi : A_1 \invamp A_2 \vdash B_1 \otimes B_2$ is possible only for trivial cases; thus, assume otherwise.
Then, by linearity of $\phi$, $\otimes\invamp$L and $\otimes\invamp$R, we may assume that either $\Delta$ and $\Gamma$ are both finite tensors of wo-games or finite pars of wc-games

For the first case, i.e., $\Delta = \otimes_{i =1}^n A_i$, $\Gamma = \otimes_{i = 1}^n B_i$ and $\phi = \otimes_{i = 1}^n \phi_i$ for some wo-games $A_i$ and $B_i$, and strict, linear strategies $\phi_i : A_i \multimap B_i$ ($i = 1, 2, \dots, n$). 
By linearity of $\phi$ and $\otimes$R$^-$, definability of $\phi$ is reduced to that of each $\phi_i$.
The second case is handled similarly by linearity of $\phi$ and $\invamp$L$^-$.
\fi
Finally, full completeness of the interpretation of LLJ (without $\oplus$) in $\mathcal{LG}^{\mathsf{wb}}$ is shown just similarly. 
\end{proof}

\section{Conclusion and Future Work}
\label{ConclusionAndFutureWork}
We have given a unity of logic in terms of sequent calculi, categories and games.
As future work, we would like to develop \emph{term calculi} that match our semantics. 
We are also interested in extending the present work to \emph{predicate logic}.




\bibliographystyle{IEEEtran}
\bibliography{RecursionTheory,LogicalCalculus,HoTT,GoI,GamesAndStrategies,LinearLogic,PCF,TypeTheoriesAndProgrammingLanguages,CategoricalLogic,Recursion}

\end{document}